\def\R{{ \mathbb{R}}}
\newcommand{\ep}{\epsilon}
\newtheorem{thm}{Theorem}[section]
\newtheorem{cor}{Corollary}[section]
\newtheorem{theorem}{Theorem}
\newtheorem{lemma}[cor]{Lemma}
\newtheorem{proposition}[cor]{Proposition}
\newtheorem{corollary}[cor]{Corollary}
\theoremstyle{definition}
\newtheorem{definition}[cor]{Definition}
\newtheorem{remark}[cor]{Remark}
\newtheorem*{remark*}{Remark}
\numberwithin{figure}{section} 
\numberwithin{equation}{section}
\begin{document}

\title{Asymptotic spreading of KPP reactive fronts in heterogeneous shifting environments
\footnote{Keywords: Asymptotic speed of spread, shifting environment, Hamilton-Jacobi equation, spatio-temporal delays. \newline 2010 Mathematics Subject Classification. Primary: 35B40, 35K57, 35R10; Secondary: 35D40.}}
\author{King-Yeung Lam \\
{\small Department of Mathematics, The Ohio State University, Columbus, OH 43210, USA}\\
\\
Xiao Yu\footnote{Corresponding Author: xymath19@m.scnu.edu.cn}\\
{\small School of Mathematical Sciences, South China Normal University, Guangzhou 510631,  China}}

\date{July 2020}
\date{}
\maketitle

\begin{abstract}
    We study the asymptotic spreading of Kolmogorov-Petrovsky-Piskunov (KPP) fronts in heterogeneous shifting habitats, with any number of shifting speeds, by further developing the method based on the theory of viscosity solutions of Hamilton-Jacobi equations. Our framework addresses both reaction-diffusion equation and integro-differential equations with a distributed time-delay. The latter leads to a class of limiting equations of Hamilton-Jacobi-type depending on the variable $x/t$ and in which the time and space derivatives are coupled together. We will first establish uniqueness results for these Hamilton-Jacobi equations using elementary arguments, and then characterize the spreading speed in terms of a reduced equation on a one-dimensional domain in the variable $s=x/t$. In terms of the standard Fisher-KPP equation, our results leads to a new class of ``asymptotically homogeneous" environments which share the same spreading speed with the corresponding homogeneous environments.

    

\end{abstract}
%
\section{Introduction}

In this paper, we consider the spreading property of positive solutions $u(t,x)$ to a class of  functional differential equations with diffusion on  $\mathbb{R}$:
\begin{equation}\label{Model}
\begin{cases}
\partial_t u=\! \partial_{xx} u\!+\! f_1(t,x,u)\!+\! \int_{0}^{\tau_0}\!\!\int_{\mathbb{R}}\Gamma(\tau,y)f_2(t\!-\!\tau,x\!-\!y,u(t\!-\!\tau,x\!-\!y)) dy d\tau,  & \text{ in }(0,\infty)\! \times\! \mathbb{R},\\
u(t,x) =\! \phi(t,x), &\text{ on } [-\tau_0,0]\! \times \!\mathbb{R},
\end{cases}
\end{equation}
where $0<\tau_0<\infty$. The existence and uniqueness of solutions can be established as in \cite{XuXiao_2016}. When $f_1(t,x,u)=(r(t,x)-u)u$ and $f_2\equiv 0$, we give new results concerning the classical Fisher-KPP equation with heterogeneous coefficients. When $f_2$ is non-trivival and $f_1<0$, the model \eqref{Model} was motivated by the study of structured populations with distributed maturation delay, in which juveniles and adults have different movement patterns and $f_2$,$-f_1$ are regarded as birth and death functions of adult population, respectively; see \cite{Gourley2003,GW2006,Schaaf_1987,SWZ_2001,XuXiao_2016} and the references therein.

We will treat the following classes of initial data $\phi$.
\begin{definition}\label{def:initial}
We say that the initial data $\phi$ satisfies (IC$_\mu$) for some $\mu \in (0,\infty)$ provided $\phi \!\in\! L^\infty([-\tau_0,0]\times\mathbb{R})$ is non-negative and there is $0<a<b$ such that
$$
a  e^{-(\mu+o(1))x} \le \phi(t,x) \leq b e^{-(\mu + o(1))x}  \quad \text{ for }x  \gg 1, \forall t\in[-\tau_0,0].
$$
We say that the initial data $\phi$ satisfies (IC$_\infty$) provided $\phi \in L^\infty([-\tau_0,0]\times\mathbb{R})$ is non-negative, 
and 
$$
\limsup_{ x \to \infty} \left[e^{\mu x}\sup\limits_{t \in [-\tau_0,0]}\phi(t,x)\right] = 0 \quad \text{ for every }\mu>0.
$$
In particular, $\phi$  satisfies $({IC}_\mu)$ for some $\mu \in (0,\infty)$ if there are positive constants $a$ and $b$ such that $a  e^{-\mu x} \le \phi(t,x) \leq b e^{-\mu x}$; whereas $\phi$ satisfies $({IC}_\infty)$
 if it is compactly supported in $[-\tau_0,0]\times\mathbb{R}$.
\end{definition}
The estimation of the asymptotic speeds of spread, or spreading speeds, is central in the study of biological invasions. 
The concept, originally introduced by Aronson and Weinberger \cite{Weinberger1978}, says that a species residing in a one-dimensional domain $\mathbb{R}$ with population density $u(t,x)$ has spreading speed $c^*>0$ if, for each $\eta>0$,
$$
\lim_{t\to\infty} \left[\sup_{x \geq (c^*+\eta)t} u(t,x)\right] = 0\quad \text{ and }\quad \liminf_{t\to\infty} \left[\inf_{0 \leq x \leq (c^*-\eta)t} u(t,x) \right]>0.
$$
For the Fisher-KPP equation, 
\begin{equation}\label{eq:model4}
\partial_t u= \partial_{xx} u +u(r(t,x)-u)  \quad  \text{ in }(0,\infty) \times \mathbb{R},
\end{equation}
it is well-known \cite{Fisher1937,Kolmogorov_1937,Weinberger1978} that when $r(t,x) \equiv r_0$ for some positive constant $r_0$, then the single species has spreading speed $c^* = 2\sqrt{r_0}$.
In addition, the spreading speed $c^*$ coincides with the minimal speed of the
traveling wave solutions to \eqref{eq:model4}.

For spatially periodic environment, Weinberger \cite{Weinberger_1982} introduced an elaborate method and showed the existence of spreading speeds for discrete-time recursions. Subsequently, the general theory on the existence of spreading speeds and its coincident with the minimal wave speed was developed in \cite{LiangZhao2007} for monotone dynamical systems and \cite{FYZ_2017} for the time-space periodic monotone systems with weak compactness. 

Using the super- and sub-solutions method and the principal eigenvalue of time-space periodic parabolic problems, spreading properties in time-space periodic and more general environments are investigated in \cite{BHN2008}, as well as in \cite{Shen_2010}. 



More recently, by combining the Hamilton-Jacobi approach \cite{ES89} and homogenization ideas, Berestycki and Nadin \cite{Nadin2012,Nadin2020} showed the existence of spreading speeds for spatially almost periodic, random stationary ergodic, and very general environments. Their spreading speed is expressed as a minimax formula in terms of suitable notions of generalized principal eigenvalues in unbounded domains. See also \cite{Zhou2020} for the relative result on the nonlocal KPP models. 

In the following, we outline the basic ideas of the Hamilton-Jacobi approach.

\subsection{The Hamilton-Jacobi approach}

\medskip

The Hamilton-Jacobi approach was introduced by Freidlin \cite{Freidlin1984}, who employed probabilistic
arguments to study the asymptotic behavior of solution to the Fisher-KPP equation
modeling the population of a single species. Subsequently, the result was generalized
by Evans and Souganidis \cite{ES89} using PDE arguments; see also 
\cite{Xin2000}. 
The method can be briefly outlined as follows:
\begin{enumerate}
\setlength\itemsep{0em}
    \item The WKB-Ansatz: $u^\ep(t,x) = u\left(\frac{t}{\ep},\frac{x}{\ep}\right)$ and $w^\ep(t,x) = - \ep \log u^\ep(t,x)$.
    \item Observe that the half-relaxed limits $w^*(t,x)$ and $w_*(t,x)$, given by
\begin{equation}\label{eq:wstar'}
w^*(t,x)=\limsup_{\epsilon\to0
	\atop(t',x')\to(t,x)} w^\epsilon(t',x'),\quad w_*(t,x)=\liminf_{\epsilon\to0
	\atop(t',x')\to(t,x)} w^\epsilon(t',x'),
\end{equation}
    are respectively viscosity sub- and super-solutions of a given Hamilton-Jacobi equation \cite{BP1987}.
    \item Show that $w^*(t,x) \leq  w_*(t,x)$ by establishing a strong comparison result (SCR). 
    \item Since $w^* \geq w_*$ by construction, we have $w^*=w_*$ and thus $w^\epsilon$ converges locally uniformly to the unique vicosity solution $\hat{w}(t,x)$ of the Hamilton-Jacobi equation.
    \item Since $\hat{w}(t,x) \geq 0$ for all $(t,x)$, 
    the spreading speed $c^*$ can be characterized by the free boundary separating the regions $\Omega_0:=\textup{Int}\,\{(t,x):\, \hat{w}(t,x)=0\}$ and $\Omega_1:= \{(t,x):\, \hat{w}(t,x)>0\}$.
\end{enumerate}

For example, when $r(t,x) \equiv r_0$ in the Fisher-KPP equation \eqref{eq:model4} with Heaviside-like initial data, the limiting Hamilton-Jacobi equation is
\begin{equation}\label{eq:hj_kpp_0}
\min\{w, w_t + |w_x|^2 + r_0\} = 0 \quad \text{ for }(t,x) \in (0,\infty)\times \mathbb{R},
\end{equation}
with initial condition
\begin{equation}\label{eq:initialinf}
w(0,x) = 0 \quad \text{ for }x \leq 0, \quad\text{ and }\quad w(0,x) = +\infty \quad \text{ for }x > 0.
\end{equation}
By a duality correspondence of the viscosity solution of \eqref{eq:hj_kpp_0} with the value function of certain zero sum, two player differential game with stopping times, and the dynamic programming principle, it can be shown \cite{ES89} that \eqref{eq:hj_kpp_0} with initial data \eqref{eq:initialinf} has a unique viscosity solution 
$$
\hat{w}(t,x) = t\max\left\{ \frac{1}{4} \left| \frac{x}{t}\right|^2 - r_0  , 0 \right\}.
$$
Thus $c^* = 2\sqrt{r_0}$ in this case. (Note that $L(v) = \frac{v^2}{4} - r_0$ is the Legendre transform of $H(p) = p^2 + r_0$.)
The above formula for spreading speed holds also for environments which are compact perturbations of homogeneous environment, i.e. $r(t,x) = r_0$ for $(t,x)$ outside a compact set \cite{BHN2008,KS2011}. 
\begin{remark}
{\it As we shall see, the above holds for a more general class of ``asympoptotically homogeneous" environments, namely, $r(t,x)$ satisfying $\liminf\limits_{M \to \infty} \inf\limits_{[M,\infty)\times [M,\infty)} r >0$, and 
$$
\limsup_{\epsilon\to0\atop(t',x')\to(t,x)} r\left(\frac{t'}{\ep},\frac{x'}{\ep} \right) = r_0 \quad \text{everywhere, and}\,
\liminf_{\epsilon\to0
	\atop(t',x')\to(t,x)} r\left(\frac{t'}{\ep},\frac{x'}{\ep} \right) = r_0\, \text{ almost everywhere}.$$
	See Theorem \ref{thm:kpp1}.  An example is $r(t,x) = r_0 - r_1(x-c_1 t)$ for some positive constants $r_0, c_1$ and a compactly supported, non-negative function $r_1$; see Figure \eqref{fig}. }
\end{remark}

When $r(t,x) = r(x)$ and is $1$-periodic, it is shown in \cite{Nadin2012} that the limiting H-J equation is
$$
\min\{w, w_t + \hat H(w_x)\}=0  \quad \text{ for }(t,x) \in (0,\infty)\times \mathbb{R},
$$
where $\hat H(p) = k^{per}_p$ is characterized as the principal eigenvalue of the elliptic eigenvalue problem
$$
\phi''- 2p \phi' + [r(x) + p^2] \phi =  k\phi \quad \quad \text{ and }\quad  \phi(x) = \phi(x+1)\quad \text{ in }\mathbb{R}.
$$
Then $\hat w(t,x) = t \max\left\{ \hat{L} \left( \frac{x}{t} \right) ,0 \right\}$, where $\hat L(v)$ is the Legendre transform of $\hat H(p)$, given by 
$$
\hat L(v):= \sup_{p>0} \left\{ vp - \hat{H}(p)\right\}.
$$
Since $w(t,c^*x) = 0$, we need to solve $\hat L(c^*) = 0$, i.e. $\sup\limits_{p>0} \left\{ c^*p - \hat{H}(p)\right\} = 0.$ 
Hence, 
$$
 c^* = \inf\limits_{p>0} \frac{\hat{H}(p)}{p} = \inf\limits_{p>0} \frac{k^{per}_p}{p},
$$
which gives an alternative derivation of the results in \cite{BHN2008,Freidlin1984,Weinberger2002}. This framework is substantially generalized recently by Berestycki and Nadin \cite{Nadin2012,Nadin2020} to almost periodic, random stationary ergodic, and more general environments, via the homogenization point of view using suitable notions of principal eigenvalues in unbounded domains of the form $\{(t,x): t > M,\, x>M\}$. 

On the other hand, the application of the Hamilton-Jacobi framework has largely been limited to reaction-diffusion or nonlocal diffusion equation \cite{Bouin2018}, and has not been extended to models with time-delay such as \eqref{Model}. 

\subsection{Asymptotic spreading in shifting environments}

Yet another type of spatio-temporal heterogeneity is introduced by the recent work of Potapov and Lewis \cite{Potapov2004} and Berestycki et al. \cite{Berestycki2009} to model the effect of shifting of isotherms. Such heterogeneities, incorporating the variable $x-ct$ in the coefficients, are not considered in the aforementioned results. 
 By assuming that the moving source patch for a focal species is finite and is being surrounded by sink patches, \cite{Berestycki2009,Potapov2004} investigated the critical patch size for species persistence. 
In \cite{Li2014}, Li et al. proposed to study the Fisher-Kpp equation with a shifting habitat $r(t,x) = r(x-c_1 t)$: 
\begin{equation}\label{eq:model5}
u_t = u_{xx} + u(r(x- c_1t)-u)  \quad  \text{ in }(0,\infty) \times \mathbb{R},  
\end{equation}
which describes the situation that the favorable
environment is shrinking in the sense that $c_1 > 0$ and $r \in C(\mathbb{R})$ is increasing and satisfies $r(-\infty)<0<r(+\infty)$. It is proved in \cite{Li2014} (see also \cite{Hu2020}) that if the species persists, then the species spreads at the speed $2\sqrt{r(+\infty)}$. 
We refer to \cite{Fang2018,FLW_2016} for the existence of forced waves, and to 
\cite{Zhang2017,Yuan2019} for related results for two-competing species. 
We also mention
\cite{BR_2008} for habitats with two-shifts.

More recently, the general theory on the propagation dynamics without spatial translational invariance was established by Yi and Zhao \cite{Yi2020} for monotone evolution systems. A key assumption in \cite{Yi2020} is that the given monotone system is sandwiched by two limiting homogeneous systems in certain translation sense, and that one of the limiting homogeneous system is unsuitable for species persistence while the other one has KPP structure. 
It was shown that the spreading speed coincides with the spreading speed in the limiting homogeneous systems with KPP structure. In particular, \cite{Yi2020} generalizes \cite{Li2014}
the context of \eqref{eq:model5}. 


An interesting case 
arises when both of the limiting systems has KPP structures, but with different spreading speeds, e.g. $c_-=2\sqrt{r(-\infty)},c_+=2\sqrt{r(+\infty)}$ for \eqref{eq:model5}. 
The spreading behavior when $0 < r(-\infty) < r(+\infty)$ is especially subtle. In \cite{HSL2019}, it is proved that $c^*=c_+$ if $c_1\ll 1$ and $c^* = c_-$ if $c_1 \gg 1$. But the general case remains open.  
By the maximum principle, it is not difficult to see that the actual spreading speed of the species must be no slower than $c_-$ and no faster than $c_+$. However, there is a fundamental difference between the homogeneous and heterogeneous cases as far as the spreading speed is concerned. As discussed earlier, the spreading speed can be computed locally when the environment is homogeneous or periodic. However, when the environment is heterogeneous and shifting, it is not always possible to calculate the spreading speed using local considerations \cite{Majda1994}. 
By the Hamilton-Jacobi approach, we can gain a more "global" point of view and show that the spreading speed of \eqref{eq:model5} can be subject to nonlocal pulling effect \cite{Scheel_2014,Girardin2019}, and is influenced by the speed $c_1$ of the shifting environment:
$$
c^* = \begin{cases}
c_+=2\sqrt{r_2} &\text{ if }c_1 \leq 2\sqrt{r_2},\\
\frac{c_1}{2}-\sqrt{r_2-r_1} + \frac{r_1}{\frac{c_1}{2}-\sqrt{r_2-r_1}} &\text{ if } 2\sqrt{r_2} < c_1 < 2(\sqrt{r_2-r_1} + \sqrt{r_1}),\\
c_-=2\sqrt{r_1} &\text{ if }c_1 \geq 2(\sqrt{r_2-r_1} + \sqrt{r_1}),
\end{cases}
$$
where $r_2 = r(+\infty)$, $r_1 = r(-\infty)$ and $r_2 > r_1 >0$. 
See Theorem \ref{thm:kppp}(iv) for details. We point out that it is possible to derive
this particular result as a consequence of \cite{Scheel_2014}, which relies on the change of coordinates $x' = x-c_1t$ to transform \eqref{eq:model5} into a problem with spatially heterogeneous, but temporally constant coefficients. But what about environments with more than one speed of shift, such as $r(t,x)=r_1(x-c_1t)+r_2(x-c_2t)$?

\subsection{Main Results}

In this paper, we will further develop the method based on Hamilton-Jacobi equations to determine the spreading speed of a species in a heterogeneous shifting habitat, with multiple (or indeed infinitely many) shifting speeds, which leads to a new class of Hamilton-Jacobi equations.  
Our approach will provide a unified framework to address both reaction-diffusion equation, and integro-differential equation with exponentially decaying or compactly supported initial data; see Definition \ref{def:initial}.
The spreading speed will be characterized in terms of a reduced Hamilton-Jacobi equation in a single variable $s=x/t$. 
We will also provide a new proof of uniqueness for the underlying Hamilton-Jacobi equations with elementary arguments. As a by-product of our approach, we obtain a new class of ``asymptotically homogeneous" environments which share the same spreading speed with the corresponding homogeneous environment; see Theorem \ref{thm:kpp1}.




For $i=1,2$, let  $R_i, \underline{R}_i\in L^\infty(\mathbb{R})$ be given by (note that $R_i,\underline{R}_i$ only depend on $x/t$ below) 
\begin{equation}\label{eq:h4}
\begin{cases}
R_i(x/t):= \limsup\limits_{\epsilon\to0
\atop(t',x')\to(t,x)}\partial_u f_i(t'/\ep,x'/\ep,0) & \text{ for  $(t,x) \in (0,\infty)\times\R$ }\\
\underline{R}_i(x/t):=	\liminf\limits_{\epsilon\to0
		\atop(t',x')\to(t,x)}\partial_u f_i(t'/\ep,x'/\ep,0) & \text{ for  $(t,x) \in (0,\infty)\times\R$. }
\end{cases}
\end{equation}
We recall the concept of local monotonicity from \cite{CH_2008}.
\begin{definition}\label{def:lc}
We say that $h: \mathbb{R} \to \mathbb{R}$ is locally monotone if, for each $s_0$, either
$$
\lim_{\delta \to 0} \mathop{\inf\limits_{|s_i-s_0|<\delta}}_{s_1<s_2} (h(s_1)-h(s_2)) \geq 0, \quad \text{ or }\quad \lim_{\delta \to 0} \mathop{\sup\limits_{|s_i-s_0|<\delta}}_{s_1<s_2} (h(s_1)-h(s_2)) \leq 0.
$$
\end{definition}
We will assume the following concerning \eqref{Model}:
\begin{enumerate}
\item[(H1)] For some constant $\delta^*>0$, $f_1\in C(\mathbb{R}_+ \times \mathbb{R} \times \mathbb{R}_+) \cap C^1(\mathbb{R}_+ \times \mathbb{R}\times [0,\delta^*])$ and $f_2\in C([-\tau_0,\infty) \times \mathbb{R} \times \mathbb{R}_+) \cap C^1([-\tau_0,\infty) \times \mathbb{R}\times [0,\delta^*))$ satisfy
$$
\begin{cases}
f_i(t,x,0) = 0 \quad \text{ and }\quad  f_i(t,x,u) \leq u \partial_u f_i(t,x,0) &\text{ for all }(t,x,u),\, i=1,2,\\
\sup\limits_{\R_+\times\mathbb{R} \times [0,M]}|\partial_uf_i(t,x,s)|<\infty & \text{ for each $M>0$, }i=1,2,\\
\end{cases}
$$
Furthermore, for any $\eta'>0$, there exists  $\delta_*>0$ independent of $(t,x)$ such that
\begin{equation}\label{eq:H2}
f_i(t,x,u) \geq u(\partial_u f_i(t,x,0)-\eta'), \quad \text{ if }0 \leq u \leq \delta_*,\, i=1,2.
\end{equation}
	\item[(H2)] There exists  $L_0>0$ such that $G(t,x,L,L) \leq 0$ for $(t,x) \in (0,\infty)\times \mathbb{R}$ and $L \in [L_0,\infty)$, where 
	$$G(t,x,u,v):= f_1(t,x,u)+ \int_0^{\tau_0}\int_\mathbb{R}\Gamma(\tau,y)  f_2(t-\tau,x-y,v)\,dyd\tau.$$
	\item[(H3)] $f_2(t,x,v) \equiv 0$, or $f_2(t,x,v) >0$ in $[-\tau_0,\infty)\times\mathbb{R}\times(0,\infty)$ and $\Gamma\in L^1([0,\tau_0]\times\R)$ is non-negative and satisfies $\int_0^{\tau_0} \int_{\mathbb{R}} \Gamma(\tau,y) \,dyd\tau=1$ and $\int_0^{\tau_0} \int_{\mathbb{R}} \Gamma(\tau,y) e^{py + q\tau}\,dyd\tau <\infty$ for all $(p,q) \in \mathbb{R}^2$.

\item[(H4)]
The functions $R_1(s)$ and $R_2(s)$, given by \eqref{eq:h4}, satisfy 
\begin{equation}\label{eq:H7}
R_i(s) = \underline{R}_i(s) \quad \text{ a.e. in } (0,\infty), \quad \text{ and }\quad 
\underline{R}_{1}(s) +  \underline{R}_{2}(s)>0 \quad \text{ for each }s > 0.
\end{equation}
and one of the following holds:
\begin{itemize}
    \item[{\rm (i)}] $R_1$ and $R_2$ are both non-increasing, or both non-decreasing;    
    \item[{\rm (ii)}] $R_1$ is continuous, and $R_2$ is monotone;    
    \item[{\rm (iii)}] $R_2$ is piecewise constant, and $R_1$ and $R_1+R_2$ are both locally monotone.
\end{itemize}

\item[(H5)] For any $\phi\in L^\infty([-\tau_0,0]\times\R)$ with $\phi\ge\not \equiv 0$ there exists $\underline{s}>0$ such that the solution $u$ of \eqref{Model} satisfies $$\liminf\limits_{t \to +\infty} \inf_{0 \leq x \leq \underline{s} t} u(t,x,\phi)>0.$$

\end{enumerate}
The hypothesis (H1) says that the nonlinearity is sublinear. In case $f_2$ is nontrivial, we only assume $f_2(\cdot,\cdot,u)$ is monotone close to 0, in other words, the full system might not admit the comparison principle;
(H2) is a self-limitation assumption; (H3) says that $\Gamma$ has finite moments to ensure a finite spreading speed; some sufficient conditions to guarantee uniqueness in the underlying HJ equation are given in (H4); (H5) means the population spreads successfully to the right.

\begin{remark}
Hypothesis (H5) can be guaranteed if $\liminf\limits_{t \to \infty}\left[ \inf\limits_{x \in [0,\underline{s} t]} \partial_u f_1(t,x,0)\right]>0.$ (See \cite{BHN2008}.) This is equivalent to $\underline{R}_1(s)>0$ for each $s \in [0,\underline{s}]$. 
More generally, if there exist $\underline{s}, c_1 >0$ such that
$\inf\limits_{c_1 < s<c_1+\underline{s}} \underline{R}_1(s)> (c_1)^2/4$, one can apply a change of coordinates $x' = x-c_1 t$, which introduces a drift term, then (H5) holds,  
so that the arguments of this paper can also be applied.
This is connected with the results in \cite{HSL2019,Hu2020,Li2014} when $\underline{R}_1(s) <0$ in $(-\infty,c_1]$ and $\underline{R}_1(s) >0$ in $(c_1,\infty)$.
\end{remark}
\begin{remark}
For example, take $\Gamma$ to be any probability kernel on $[0,\tau_0]\times\mathbb{R}$ with finite moments, and 
$$
f_1(t,x,u) = (r_1(x- c_1 t)-u)u \quad \text{ and }\quad f_2(t,x,v) = r_2(x-c_2 t) v e^{-v}.
$$
where $r_i$ are monotone functions such that $\inf r_1 + \inf r_2 >0$.
If $r_1,r_2$ are both increasing or both decreasing, then {\rm (H1)-(H5)} are satisfied with
$$
R_i(s) = \begin{cases}
r_i(-\infty)& \text{ for }s < c_i\\
r_i(-\infty) \vee r_i(+\infty) & \text{ for }s = c_i\\
r_i(+\infty) & \text{ for }s > c_i\\
\end{cases}
\quad \text{ and }\quad 
\underline{R}_i(s) = \begin{cases}
r_i(-\infty)& \text{ for }s < c_i\\
r_i(-\infty) \wedge r_i(+\infty) & \text{ for }s = c_i\\
r_i(+\infty) & \text{ for }s > c_i.\\
\end{cases}
$$
\end{remark}
\begin{remark}\label{rmk:746}
When $f_2 \equiv 0$, and $f_1(t,x,u)=(r(t,x)-u)u$, then {\rm(H1)-(H5)} reduces to 
\begin{itemize}
    \item[$(\tilde{\rm H})$] $R_1(s)$ is locally monotone, $\underline{R}_1(s) >0$ for each $s \geq 0$ and $R_1=\underline{R}_1$ a.e.
\end{itemize}
where $R_1(s) = \limsup\limits_{t\to\infty \atop s' \to s}r(t,s't)$ and $\underline{R}_1(s) = \liminf\limits_{t\to\infty \atop s' \to s}r(t,s't)$.
\end{remark}

As observed in \cite{LLL_2020c}, the exact spreading speed can be characterized in terms of the following Hamilton-Jacobi equation on a one-dimensional domain:
\begin{equation}\label{rhoeq1''}
\min\left\{\rho,\rho-s\rho'+|\rho'|^2+R_1(s)+R_2(s)\int_{0}^{\tau_0}\int_{\R }\Gamma(\tau, y)e^{(\rho-s\rho')\tau+\rho' y} dy d\tau\right\}=0 \quad \text{ in }(0,\infty). 
\end{equation}
To tackle the classical issue of uniqueness for the Hamilton-Jacobi equation \eqref{rhoeq1''}, 
we need one of the following to hold for the function $R_2$, and sometimes $R_1$, as $s\to +\infty$.  
\begin{itemize}
    \item[{\rm(H6)}] $R_2(s)$ is identically zero, or  
    non-increasing in $[s_0,\infty)$ for some $s_0$,
    \item[{\rm(H6$'$)}] $\lim\limits_{s\to+\infty}R_2(s)$ exists and is positive, and  $\displaystyle R_2(s) \geq  \left(\sup_{(s,\infty)}R_2\right) - o\left(\frac{1}{s^2}\right)$ for $s \gg 1$, 
       \item[{\rm(H6$''$)}] One of $\lim\limits_{s \to +\infty}R_i(s)$ exists, $\limsup\limits_{s\to\infty} R_2(s)>0$, and $\Gamma(\tau,y) = 0$ in $[0,\tau_1]\times (-\infty,0),$ for some  $\tau_1 \in (0,\tau_0]$.
\end{itemize}

Next, we specify the spreading speeds $\hat{s}^\mu$ as a free boundary point of the solution of \eqref{rhoeq1''}, which depends implicitly on $R_1(s)$, $R_2(s)$, $\Gamma(\tau,y)$ and $\mu \in (0,\infty]$. For the the definition of viscosity super- and sub-solutions, see Section \ref{sec:2}.
\begin{proposition}\label{prop:1.2}
Let $R_1 \in L^\infty(0,\infty)$ be locally monotone, $R_2 \in L^\infty(0,\infty)$ non-negative, and either monotone or piecewise constant. Suppose either
$$
{\rm(a)}\quad \mu \in (0,\infty), \quad \text{ or }\quad {\rm(b)} \quad \mu = \infty \, \text{ and one of {\rm(H6), (H6$'$) or (H6$''$)} holds,}
$$
then there exists a unique viscosity solution $\hat\rho^\mu \in C([0,\infty))$ of \eqref{rhoeq1''} such that
    \begin{equation}\label{eq:hjbc}
    \hat\rho^\mu(0) = 0, \quad \text{ and }\quad \lim_{s \to +\infty} \frac{\hat\rho^\mu(s)}{s} = \mu.
    \end{equation}
    Furthermore, $s\mapsto \hat\rho^\mu(s)$ is non-decreasing in $[0,\infty)$, so that the free boundary point
    \begin{equation}\label{eq:snlp}
    \hat{s}^\mu:= \sup \{ s : \hat\rho^\mu(s) = 0\} 
    \end{equation}
    is well-defined.
\end{proposition}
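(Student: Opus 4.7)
The plan is to first extract an effective Hamiltonian from the implicit structure of \eqref{rhoeq1''}, then build $\hat\rho^\mu$ explicitly as an envelope of affine subsolutions, and finally prove uniqueness via a viscosity comparison argument whose asymptotic leg at $s=+\infty$ is the source of the hypotheses (H6)--(H6$''$).

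\emph{Step 1 (Effective Hamiltonian).} Write $\Phi(q,p):=\int_{0}^{\tau_0}\!\!\int_{\R}\Gamma(\tau,y)e^{q\tau+py}\,dy\,d\tau$, which is finite under (H3) and strictly increasing in $q$. Since $R_2\ge 0$, the map
\begin{equation*}
F(s,q,p)\;:=\;q+p^2+R_1(s)+R_2(s)\Phi(q,p)
\end{equation*}
is strictly increasing in $q$ and ranges onto $\R$, so there is a unique $Q(s,p)$ with $F(s,Q(s,p),p)=0$. In the region $\{\hat\rho^\mu>0\}$, equation \eqref{rhoeq1''} is equivalent to the first-order ODE $\hat\rho^\mu(s)-s(\hat\rho^\mu)'(s)=Q(s,(\hat\rho^\mu)'(s))$. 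The sign assumption $\underline R_1+\underline R_2>0$ in (H4) forces $Q(s,0)<0$ for every $s>0$, which is precisely what allows the constraint $\rho\ge 0$ to bind and produces a nontrivial free boundary.

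\emph{Step 2 (Explicit envelope and monotonicity).} For each slope $p>0$, the affine function $\rho_{p,k}(s):=ps-k$ is a viscosity subsolution of the nonlinear branch of \eqref{rhoeq1''} iff $-k\le Q(s,p)$ for every $s>0$, i.e.\ $k\ge k^*(p):=\sup_{s>0}(-Q(s,p))$. I then define
\begin{equation*}
\hat\rho^\mu(s)\;:=\;\max\!\Bigl\{0,\;\sup_{0<p\le\mu}\bigl(ps-k^*(p)\bigr)\Bigr\},
\end{equation*}
with $p\in(0,\infty)$ if $\mu=\infty$. This is non-decreasing as a supremum of non-decreasing affine functions capped by $0$; it satisfies $\hat\rho^\mu(0)=0$ because $k^*(p)\ge 0$ (pick any $s$ with $Q(s,0)<0$ and use continuity of $Q$ in $p$); and $\hat\rho^\mu(s)/s\to\mu$ follows from the sup structure. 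Stability of viscosity subsolutions under suprema gives the subsolution inequality, and at every $s$ with $\hat\rho^\mu(s)>0$ the saturating pair $(p,k^*(p))$ furnishes the tangent affine test that delivers the supersolution inequality. With monotonicity in hand, $\hat{s}^\mu$ in \eqref{eq:snlp} is well-defined.

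\emph{Step 3 (Uniqueness and the role of (H6)--(H6$''$)).} Let $\rho_1,\rho_2$ be two viscosity solutions satisfying \eqref{eq:hjbc}. I would run the standard doubling-of-variables argument on $\rho_1(s_1)-\rho_2(s_2)$ minus a penalty $(s_1-s_2)^2/\varepsilon^2+\delta\psi(s_1)$; the viscosity inequalities at a maximizer, combined with strict monotonicity of $F$ in $q$, yield the usual contradiction \emph{provided} the maximizer stays in a bounded region of $(0,\infty)$. The condition $\rho_i(0)=0$ handles the left endpoint. The main obstacle---and the principal technical hurdle of the proposition---is the right endpoint: when $\mu<\infty$ the condition $\rho_i(s)/s\to\mu$ together with the envelope-type upper bound from Step 2 pins down the tail, but when $\mu=\infty$ the boundary condition is only qualitative, so an extra quantitative tail is required to prevent the penalized maximum from escaping to infinity. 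This is exactly what (H6)--(H6$''$) provide: eventual monotonicity of $R_2$ under (H6) supplies a barrier via the Legendre transform of the tail Hamiltonian; the $o(1/s^2)$ sharpening in (H6$'$) pins down the asymptotic expansion of any solution; and the one-sided support of $\Gamma$ under (H6$''$) decouples the delay term at large $s$, so that a classical one-dimensional comparison becomes available. Once uniqueness is secured, the global monotonicity claim for $\hat\rho^\mu$ follows by applying uniqueness to $\hat\rho^\mu(\cdot+\eta)$ for $\eta>0$.
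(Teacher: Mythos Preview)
Your Step 2 has a genuine gap: the envelope of affine subsolutions is in general only a subsolution, not a supersolution, when the coefficients $R_1,R_2$ depend on $s$. At a point $s_0$ where $\hat\rho^\mu(s_0)>0$ and the optimal slope is $p_0$, your construction gives $\hat\rho^\mu(s_0)-s_0p_0=-k^*(p_0)=\inf_{s>0}Q(s,p_0)$. The supersolution inequality requires $F(s_0,-k^*(p_0),p_0)\geq 0$, i.e.\ $-k^*(p_0)\geq Q(s_0,p_0)$; but by definition of $k^*$ you have $-k^*(p_0)\leq Q(s_0,p_0)$, with equality only when $s_0$ happens to realize the infimum. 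Nothing forces this. Concretely, take $R_2\equiv 0$ and $R_1(s)=r_2$ for $s\geq c_1$, $R_1(s)=r_1$ for $s<c_1$, with $r_2>r_1>0$. Then $Q(s,p)=-p^2-R_1(s)$, so $k^*(p)=p^2+r_2$ for every $p$, and your formula yields $\hat\rho^\infty(s)=\max\{0,\,s^2/4-r_2\}$ with free boundary $2\sqrt{r_2}$, \emph{independently of} $c_1$. But the paper's Theorem~\ref{thm:kppp}(iv) shows $\hat s^\infty=2\sqrt{r_1}$ for $c_1\geq 2\sqrt{r_1}+2\sqrt{r_2-r_1}$, so your formula is simply wrong in this case. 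The heterogeneity in $s$ is precisely what makes the problem nonlocal; an envelope over affine functions with \emph{constant} intercepts cannot capture it.

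Your monotonicity argument at the end of Step 3 also fails: $\hat\rho^\mu(\cdot+\eta)$ solves \eqref{rhoeq1''} with $R_i(\cdot+\eta)$ in place of $R_i(\cdot)$, a different equation, so uniqueness for the original equation says nothing. The paper obtains monotonicity directly (Lemma~\ref{lem:2.3}(i)) from the viscosity inequality and the sign condition $R_1+R_2>0$ in (H4). For existence, the paper does not attempt a direct construction at all: it produces the candidate as $\rho^*=\rho_*$, the common value of the half-relaxed limits $w^*(t,x)=t\rho^*(x/t)$ and $w_*(t,x)=t\rho_*(x/t)$ of the WKB transform of the PDE solution (Proposition~\ref{prop:2.3}, Corollary~\ref{lem:2.2}), and then closes the loop with the comparison result. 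If you want a direct construction, you would need something more refined than a global envelope---for instance, piecewise constructions that respect the shifts in $R_i$, as the paper carries out by hand in Section~\ref{sec:app}.
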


\begin{theorem}\label{thm:1.3}
Assume {\rm (H1)-(H5)}.
    Let $u$ be a solution to \eqref{Model} with initial data satisfying (IC$_\mu$) for some $\mu \in (0,\infty)$, then
    \begin{equation}\label{eq:1.5}
        \begin{cases}
        \lim\limits_{t\to\infty} \sup\limits_{x \geq (\hat{s}^\mu) + \eta)t} u(t,x) = 0 & \text{ for each }\eta >0,\\
        \liminf\limits_{t\to\infty} \inf\limits_{0\leq x \leq (\hat{s}^\mu - \eta)t} u(t,x) >0 & \text{ for each }0 < \eta < \hat{s}^\mu,\\
        \end{cases}
    \end{equation}
    where $\hat{s}^\mu$ is given in \eqref{eq:snlp}.
\end{theorem}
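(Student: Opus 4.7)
The plan is to implement the Hamilton--Jacobi/WKB programme outlined in the introduction, applied to \eqref{Model} and reduced to the one-dimensional equation \eqref{rhoeq1''} via the self-similar ansatz $w(t,x) = t\rho(x/t)$. First, introduce the hyperbolic rescaling $u^\epsilon(t,x) := u(t/\epsilon, x/\epsilon)$ and the logarithmic transform $w^\epsilon := -\epsilon\log u^\epsilon$. A direct computation from \eqref{Model}, combined with the sublinearity of $f_i$ in (H1), yields that $w^\epsilon$ satisfies, up to errors of order $o(1)$,
\[
\partial_t w^\epsilon - \epsilon \partial_{xx} w^\epsilon + |\partial_x w^\epsilon|^2 + \partial_u f_1\!\left(\tfrac{t}{\epsilon},\tfrac{x}{\epsilon},0\right) + \int_0^{\tau_0}\!\!\int_\R \Gamma(\tau,y)\,\partial_u f_2\!\left(\tfrac{t}{\epsilon}-\tau,\tfrac{x}{\epsilon}-y,0\right) e^{\frac{w^\epsilon(t,x)-w^\epsilon(t-\epsilon\tau,x-\epsilon y)}{\epsilon}}\,dy\,d\tau = 0.
\]
Under $(\mathrm{IC}_\mu)$, the rescaled initial data converge to $\mu x^+$; hypothesis (H5) supplies $w_*(t,x) = 0$ in a cone $\{0\leq x\leq \underline{s}\, t\}$, while the a priori bound $L_0$ from (H2) ensures $w^\epsilon \geq -O(\epsilon)$ globally.

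Next I would take the half-relaxed limits $w^*$ and $w_*$ as in \eqref{eq:wstar'} and test against smooth functions $\varphi$ at a touching point $(t_0,x_0)$ with $s_0 := x_0/t_0$. Replacing $w^\epsilon$ by $\varphi$ and Taylor-expanding $\varphi(t-\epsilon\tau, x-\epsilon y) = \varphi(t,x) - \epsilon(\tau\varphi_t + y\varphi_x) + O(\epsilon^2)$, the delayed integral passes, by dominated convergence using the exponential moments of $\Gamma$ from (H3), to $R_2(s_0)\int\int \Gamma(\tau,y) e^{\tau\varphi_t + y\varphi_x}\, dy\, d\tau$ for the super-solution inequality and to the same expression with $\underline{R}_2(s_0)$ for the sub-solution; similarly $R_1,\underline{R}_1$ arise from the $f_1$-term. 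Invoking (H4), which gives $R_i = \underline{R}_i$ a.e., we identify $w^*$ and $w_*$ as respectively sub- and super-solutions of a single Hamilton--Jacobi equation. The self-similar ansatz $w(t,x) = t\rho(x/t)$ then gives $w_t = \rho - s\rho'$ and $w_x = \rho'$, reducing the equation exactly to \eqref{rhoeq1''}, with boundary data $\rho(0) = 0$ (from (H5)) and $\rho(s)/s \to \mu$ (from $(\mathrm{IC}_\mu)$). Proposition \ref{prop:1.2} identifies the unique such solution as $\hat\rho^\mu$; a standard comparison argument yields $w^*(t,x) \leq t\hat\rho^\mu(x/t) \leq w_*(t,x)$, and since $w^* \geq w_*$ by construction we obtain locally uniform convergence $w^\epsilon \to t\hat\rho^\mu(x/t)$ in $(0,\infty)\times(0,\infty)$.

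Both conclusions of \eqref{eq:1.5} then follow from the free-boundary structure of $\hat\rho^\mu$ at $\hat{s}^\mu$. In $\{x\geq (\hat{s}^\mu+\eta)t\}$, monotonicity of $\hat\rho^\mu$ gives $\hat\rho^\mu(x/t) \geq \delta$ for some $\delta = \delta(\eta)>0$, so $u^\epsilon = e^{-w^\epsilon/\epsilon} \to 0$, and undoing the rescaling gives the first line of \eqref{eq:1.5}. In $\{0\leq x\leq (\hat{s}^\mu - \eta)t\}$, the identity $\hat\rho^\mu \equiv 0$ rules out exponential decay; combined with (H5), the sub-linearization \eqref{eq:H2}, and the upper bound $u\leq L_0$ from (H2), a standard propagation-of-positivity argument yields the second line of \eqref{eq:1.5}. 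The principal obstacle will be the rigorous passage to the limit in the delayed nonlocal term: controlling the exponential factor $\exp((w^\epsilon(t,x)-w^\epsilon(t-\epsilon\tau,x-\epsilon y))/\epsilon)$ uniformly in $(\tau,y)\in[0,\tau_0]\times\R$ requires a priori Lipschitz-in-$(t,x)$ estimates on $w^\epsilon$ matched to the moments of $\Gamma$ in (H3). A secondary subtlety is that \eqref{Model} lacks a global comparison principle when $f_2\not\equiv 0$, so the lower-bound propagation must remain in the regime $u\leq \delta_*$ afforded by \eqref{eq:H2}, where effective monotonicity is recovered.
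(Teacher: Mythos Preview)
Your outline follows the paper's approach closely: WKB transform, half-relaxed limits, reduction to the one-dimensional equation via $w(t,x)=t\rho(x/t)$, and identification with $\hat\rho^\mu$ through Proposition~\ref{prop:1.2}. Two points, however, deserve correction.

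First, the ``principal obstacle'' you identify is a red herring. No Lipschitz estimate on $w^\epsilon$ is needed to control the nonlocal exponential. At a global minimum $(t_n,x_n)$ of $w^{\epsilon_n}-\phi$, the inequality $w^{\epsilon_n}(t_n,x_n)-w^{\epsilon_n}(t,x)\le\phi(t_n,x_n)-\phi(t,x)$ holds for \emph{all} $(t,x)$, in particular for $(t_n-\epsilon_n\tau,x_n-\epsilon_n y)$; the exponential factor is therefore bounded above by $e^{(\phi(t_n,x_n)-\phi(t_n-\epsilon_n\tau,x_n-\epsilon_n y))/\epsilon_n}$, which is controlled by the smoothness of $\phi$ alone and the moment hypothesis on $\Gamma$ in (H3). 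For the sub-solution side (maximum point), the inequality reverses, giving a pointwise lower bound, and one passes to the limit by Fatou's lemma rather than dominated convergence. This is exactly what the paper does in Proposition~\ref{prop:2.3}.

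Second, the lower bound in \eqref{eq:1.5} is not obtained by a ``standard propagation-of-positivity'' argument. Knowing $w^\epsilon\to 0$ locally uniformly in $\{x<(\hat s^\mu-\eta)t\}$ does \emph{not} by itself prevent $u^\epsilon=e^{-w^\epsilon/\epsilon}\to 0$. The paper argues by contradiction: assuming $u^\epsilon(t_\epsilon,x_\epsilon)\to 0$ along a sequence with $x_\epsilon/t_\epsilon\to s_0<\hat s^\mu$, one takes the quadratic test function $\phi_\epsilon(t,x)=|t-t_\epsilon|^2+|x-x_\epsilon|^2$, locates a nearby maximum $(t'_\epsilon,x'_\epsilon)$ of $w^\epsilon-\phi_\epsilon$, uses the strong maximum principle (in original variables) to propagate $u^\epsilon\to 0$ to a full neighbourhood in the $(\tau,y)$-integration domain, and then evaluates the equation at $(t'_\epsilon,x'_\epsilon)$. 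After passing to the limit using the lower linearization \eqref{eq:H2}, one arrives at $\underline R_1(s_0)+\underline R_2(s_0)\le 0$, contradicting the positivity hypothesis in (H4). This is the step where your proposal is genuinely incomplete.
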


\begin{theorem}\label{thm:1.3b}
Assume {\rm (H1)-(H5)}. Let $u$ be a solution to \eqref{Model} with initial data satisfying (IC$_\infty$). If one of the conditions {\rm(H6)}, {\rm(H6$'$)} or {\rm(H6$''$)} holds, then
    \begin{equation}\label{eq:1.5'}
        \begin{cases}
        \lim\limits_{t\to\infty} \sup\limits_{x \geq (\hat{s}^\infty) + \eta)t} u(t,x) = 0 & \text{ for each }\eta >0,\\
        \liminf\limits_{t\to\infty} \inf\limits_{0\leq x \leq (\hat{s}^\infty - \eta)t} u(t,x) >0 & \text{ for each }0 < \eta < \hat{s}^\infty,\\
        \end{cases}
    \end{equation}
    where $\hat{s}^\infty$ is given in \eqref{eq:snlp} with $\mu=\infty$.
\end{theorem}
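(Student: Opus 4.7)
The plan is to run the same WKB/Hamilton--Jacobi machinery that underlies Theorem \ref{thm:1.3}, with one genuinely new ingredient: a boundary condition at $s=+\infty$ for the reduced problem \eqref{rhoeq1''} that encodes the faster-than-exponential decay of $(IC_\infty)$. I will set $u^{\ep}(t,x)=u(t/\ep,x/\ep)$ and $w^{\ep}=-\ep\log u^{\ep}$, and consider the half-relaxed limits $w^*$ and $w_*$ given by \eqref{eq:wstar'}. As in the proof of Theorem \ref{thm:1.3}, $w^*$ and $w_*$ are respectively viscosity sub- and super-solutions of the Hamilton--Jacobi problem which, under the self-similar ansatz $w(t,x)=t\rho(x/t)$, reduces to \eqref{rhoeq1''}. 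Since $w^*\ge w_*$ by construction, it will suffice to identify both with the unique solution $\hat\rho^\infty$ singled out by Proposition \ref{prop:1.2} under (H6), (H6$'$), or (H6$''$).

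\textbf{Boundary data for the reduced problem.} The behaviour at $s=0$ is handled exactly as in Theorem \ref{thm:1.3}: hypothesis (H5) yields $u(t,x)\ge c_0>0$ on $\{0\le x\le \underline s\, t\}$ for all large $t$, and therefore $w^*(s)=w_*(s)=0$ on $[0,\underline s]$. The new step is to verify the analogue at $\mu=\infty$ of the condition $\hat\rho^\mu(s)/s\to\mu$, namely
$$
\lim_{s\to+\infty}\frac{w_*(s)}{s}=+\infty.
$$
This would follow from a Gaussian-type a priori upper bound on $u$: iterating Duhamel's formula for $\partial_t-\partial_{xx}$ against the heat kernel, using the sublinearity in (H1) to control $f_1$ and $f_2$ by their derivatives at zero, and using the finite exponential moments on $\Gamma$ in (H3) to absorb the convolution, one should obtain $u(t,x)\le C_t\exp(-\alpha x^2/t)$ for $x\gg 1$ and some $\alpha>0$. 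Taking $-\ep\log$ and passing to the half-relaxed limit then delivers the displayed inequality.

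\textbf{Conclusion.} With both boundary conditions pinned down, I would invoke Proposition \ref{prop:1.2} with $\mu=\infty$ to produce the unique viscosity solution $\hat\rho^\infty$, and then apply a strong comparison argument in the one-dimensional reduced problem (again as in Theorem \ref{thm:1.3}, but with the new boundary condition at infinity) to force $w^*\le\hat\rho^\infty\le w_*$, hence $w^*=w_*=\hat\rho^\infty$. The free-boundary characterisation \eqref{eq:snlp} then splits $(0,\infty)$ into $(0,\hat s^\infty)$, where $\hat\rho^\infty\equiv 0$ and consequently $u(t,st)$ stays bounded below, and $(\hat s^\infty,\infty)$, where $\hat\rho^\infty>0$ and $u(t,st)$ is exponentially small in $t$. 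These two regions give respectively the lower and upper bounds in \eqref{eq:1.5'}.

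\textbf{Main obstacle.} I expect the decisive step to be the tail estimate $u(t,x)\le C_t\exp(-\alpha x^2/t)$ for the full model \eqref{Model}. When $f_2\equiv 0$ this is classical, but in the non-trivial delay/convolution case the Gaussian decay at time $t$ must be bootstrapped through the convolution against $\Gamma$ on $[0,\tau_0]\times\mathbb{R}$, and the exponential-moment assumption in (H3) is tailored precisely to close this loop. Once the tail estimate is established, the remainder of the argument runs parallel to Theorem \ref{thm:1.3}, with Proposition \ref{prop:1.2} at $\mu=\infty$ replacing its counterpart at finite $\mu$.
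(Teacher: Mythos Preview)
Your overall strategy matches the paper's: pass to $w^\ep=-\ep\log u^\ep$, take half-relaxed limits, use the self-similar reduction $w(t,x)=t\rho(x/t)$ to \eqref{rhoeq1''}, and invoke Proposition~\ref{prop:1.2}(b), which is exactly where (H6)/(H6$'$)/(H6$''$) enter via the comparison result Proposition~\ref{cp:rho}. The only genuine divergence is how you obtain the boundary condition $\rho_*(s)/s\to+\infty$.

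You propose a Gaussian tail bound $u(t,x)\le C_t\,e^{-\alpha x^2/t}$ and single out the convolution bootstrap through $\Gamma$ as the main obstacle. The paper bypasses this entirely (Lemma~\ref{lem:a5}): the defining property of $(IC_\infty)$ is that for \emph{every} $\mu>0$ there is $C_\mu$ with $\phi(\theta,x)\le C_\mu e^{-\mu x}$. One then reuses the exponential supersolution comparison already carried out for $(IC_\mu)$ in Lemma~\ref{lem:a4}, obtaining $w_*(t,x)\ge\max\{\mu x-Q_2(\mu)t,\,0\}$ for each fixed $\mu$. Sending $t\to 0$ gives $w_*(0,x)\ge\mu x$, and letting $\mu\to\infty$ yields $w_*(0,x)=+\infty$, hence $\liminf_{s\to\infty}\rho_*(s)/s=+\infty$ by lower semicontinuity. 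The convolution term is absorbed into the exponent $\lambda(\mu)$ defined implicitly by $\Delta(\lambda,\mu)=0$, exactly as in the finite-$\mu$ case, so no Gaussian bootstrap is needed. Your route would also reach the conclusion, but is strictly harder than necessary; the ``main obstacle'' you anticipate simply does not arise in the paper's argument. (A minor slip: you write $w^*\le\hat\rho^\infty\le w_*$, but these live on different domains; what the comparison actually gives is $\rho^*\le\rho_*$, hence $\rho^*=\rho_*=\hat\rho^\infty$.)
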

\begin{remark}
Since we do not impose assumptions on $f_i(t,x,u)$ for large $u$, except that it eventually becomes negative in (H2), the convergence of $u$ to a homogeneous equilibrium does not hold in general. However, if we strengthen {\rm(H2)} to
\begin{itemize}
    \item[{\rm(H2$'$)}] For $i=1,2$, $f_i(t,x,1)\equiv 0$, and $(u-1)f_i(t,x,u) <0$ for all $(t,x,u)$ such that $u \neq 1$.
    \end{itemize}
Then one can argue as in \cite{XuXiao_2016} that $u(t,x) \to 1$ for $x < \hat{s}^\mu t$ and $t \gg 1$.
\end{remark}
\begin{remark}
To compare our approach with that of Berestycki and Nadin, we only homogenize along the ray $x/t=s$ for each $s$ here, while in \cite{Nadin2012,Nadin2020}, the information in $\{(t,ye): t \gg 1, x \gg 1\}$ for each direction $e$ is homogenized via the notion of principal eigenvalues of the parabolic problems. In \cite{Nadin2020}, it is demonstrated that in higher dimensions, sometimes the spreading speed in direction $e$ does not depend only on what happens in the $e$ direction. The same can be observed in a shifting habitat on $\mathbb{R}$, where the spreading speed is nonlocally determined; see Theorems \ref{thm:4} and \ref{thm:kppp}.
\end{remark}

\subsection{Some explicit formulas of spreading speeds}

The following theorem concerns the spreading in ``asymptotically homogeneous" environments, and generalizes the spreading results of \cite{Gourley2003,XuXiao_2016}.  For simplicity, we assume that $\Gamma(\tau,y)$ is symmetric in the variable $y$ for each $\tau\in[0,\tau_0]$ in the next theorems.

\begin{theorem}\label{thm:nonkpp1}    Let $u$ be a solution to \eqref{Model} with initial data satisfying (IC$_\mu$) for some $\mu \in (0,\infty)$. Assume {\rm (H1)-(H5)} and, in addition,
for $i=1,2$, there are positive constants $r_2$, $r_1+r_2$ such that 
\begin{equation}\label{eq:R}
    R_i(s)=r_i\, \text{ for every } s\in\R,\quad \underline{R}_i(s)=r_i\, \text{ for almost every (a.e.) } s\in \R.
\end{equation}
    Then \eqref{eq:1.5} holds, and the spreading speed $\hat{s}^\mu$ is given by
    \begin{equation}\label{eq:R22}
    \hat{s}^\mu = \begin{cases}
\inf\limits_{p>0}\frac{\lambda(p)}{p} & \text{ if } \mu\in[\mu^*,\infty),\\
\frac{\lambda(\mu)}{\mu} & \text{ if } \mu\in(0,\mu^*),
    \end{cases}
    \end{equation}
where  $\lambda(p):\mathbb{R}\to (0,\infty) $ is uniquely defined by the implicit formula\begin{equation}\label{eq:lambdap} 
\Delta(\lambda,p):=-\lambda+p^2+r_1+ r_2\int_{0}^{\tau_0}\int_{\R }\Gamma(\tau,y) e^{p y-\lambda \tau}  dy d\tau=0.
\end{equation} and
 $\mu^*>0$ such that  $\frac{\lambda(\mu^*)}{\mu^*}=\inf\limits_{p>0}\frac{\lambda(p)}{p}>0$.
\end{theorem}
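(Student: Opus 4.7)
The plan is to invoke Theorem \ref{thm:1.3}, which reduces the determination of the spreading speed to an explicit computation of the free-boundary point $\hat{s}^\mu$ of the unique viscosity solution $\hat\rho^\mu$ of the reduced equation \eqref{rhoeq1''}. Under the assumption \eqref{eq:R}, the coefficients collapse to the constants $r_1, r_2$, and \eqref{rhoeq1''} becomes the autonomous Hamilton-Jacobi equation
\begin{equation*}
\min\Biggset{\rho,\;\rho - s\rho' + |\rho'|^2 + r_1 + r_2 \int_0^{\tau_0}\!\!\int_\R \Gamma(\tau,y)\, e^{(\rho - s\rho')\tau + \rho' y}\, dy\, d\tau} = 0.
\end{equation*}
Substituting the affine ansatz $\rho(s) = ps - \lambda$ into the PDE part yields exactly $\Delta(\lambda,p)=0$ from \eqref{eq:lambdap}, so each line $\rho_p(s) := ps - \lambda(p)$ is a classical solution of the PDE branch on the set where it is positive.

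Before using this, I would establish the elementary properties of $\lambda(p)$. Since $\partial_\lambda\Delta = -1 - r_2\int \tau\, \Gamma\, e^{-\lambda\tau+py}\, dy\, d\tau < 0$ and $\Delta\to\mp\infty$ as $\lambda\to\pm\infty$, the root $\lambda(p)$ is uniquely defined and smooth by the implicit function theorem. From $\lambda(0)=r_1+r_2>0$ and the $p^2$-dominance at infinity, $\lambda(p)/p\to+\infty$ at both ends of $(0,\infty)$; hence a minimizer $\mu^*\in(0,\infty)$ of $p\mapsto\lambda(p)/p$ exists. (Convexity of $\lambda(p)$, deducible from the log-convexity of the moment generating function of $\Gamma$, would moreover give uniqueness of $\mu^*$ and a decreasing/increasing decomposition of $\lambda(p)/p$ around $\mu^*$.)

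Next I would construct the candidate solution as the envelope
\begin{equation*}
\hat\rho^\mu(s) := \max\Biggset{0,\; \sup_{0<p\leq\mu}\bigparen{ps-\lambda(p)}},
\end{equation*}
which is nonnegative, convex, nondecreasing, satisfies $\hat\rho^\mu(0)=0$ and $\lim_{s\to\infty}\hat\rho^\mu(s)/s=\mu$ (the sup at $s=\infty$ is attained at the right endpoint $p=\mu$). Its free boundary is $\hat{s}^\mu=\inf_{0<p\leq\mu}\lambda(p)/p$, which equals $\lambda(\mu)/\mu$ when $\mu<\mu^*$ and $\inf_{p>0}\lambda(p)/p=\lambda(\mu^*)/\mu^*$ when $\mu\geq\mu^*$, exactly matching \eqref{eq:R22}. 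The viscosity property on $\{\hat\rho^\mu>0\}$ follows from the standard stability of viscosity sub/supersolutions under supremum of classical solutions, while on $\{\hat\rho^\mu=0\}$ the first slot of the $\min$ vanishes. Uniqueness from Proposition \ref{prop:1.2} then identifies $\hat\rho^\mu$ with the genuine viscosity solution, and Theorem \ref{thm:1.3} closes the argument.

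The main technical obstacle I anticipate is the viscosity check at the free boundary $s=\hat{s}^\mu$ and at any kinks of the convex envelope when $\mu>\mu^*$: one must verify that every element of the convex subdifferential produces a nonnegative Hamiltonian. At $s=\hat{s}^\mu$ with supporting slope $p=\mu^*$, the key identity $\mu^*\hat{s}^\mu=\lambda(\mu^*)$ substitutes $\rho - s\rho' \mapsto -\lambda(\mu^*)$ and collapses the PDE part to $\Delta(\lambda(\mu^*),\mu^*)=0$; the monotonicity of $p\mapsto\lambda(p)$ and the defining relation $\Delta(\lambda(p),p)=0$ handle the remaining intermediate slopes, reducing the whole verification to algebra that mirrors the classical KPP minimum-speed argument.
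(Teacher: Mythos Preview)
Your approach is correct and essentially coincides with the paper's: the envelope $\max\{0,\sup_{0<p\leq\mu}(ps-\lambda(p))\}$ unwinds exactly to the piecewise formulas \eqref{eq:Hrho2'} and \eqref{eq:Hrho1} that the paper constructs case by case, and the decisive supersolution check at the free boundary (showing $\Delta(\hat{s}^\mu\phi',\phi')\geq 0$ for every $\phi'$ in the subdifferential, via the monotonicity of $p\mapsto\lambda(p)/p$ on $(0,\mu^*]$) is the same computation the paper carries out. One caveat: the supremum of supersolutions is \emph{not} in general a supersolution, so your appeal to ``stability under sup'' for the supersolution part on $\{\hat\rho^\mu>0\}$ is not valid as stated---the correct observation (which the paper uses implicitly) is that your envelope is actually $C^1$ on this set, since the Legendre-transform piece and the linear piece $\mu s-\lambda(\mu)$ meet with matching slope $\mu$ at $s=\lambda'(\mu)$, so it is a classical and hence viscosity solution there, and the only genuine kink is at $s=\hat{s}^\mu$, which you already handle.
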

\begin{remark} A sufficient condition of {\rm(H4)}-(H5) and \eqref{eq:R} is when $f_i$ are independent of $t,x$ such that $f'_i(0)=r_i$. Moreover, the homogeneous coefficients case could be extended to a large class of space-time heterogeneous problems, e.g.,  $\sum^2\limits_{i=1}\inf\limits_{[0,\infty)\times\mathbb{R}} \partial_uf_i(t,x,0) >0$, and 
$$
r_1-\sum_{i=1}^m k_i(x - c_it^{\alpha_i}) \leq \partial_uf_1(t,x,0) \leq r_1\quad \text{ and }\quad 0<r_2- \sum_{i=1}^m \tilde{k}_i(x- c_i t^{\alpha_i}) \leq \partial_uf_{2}(t,x,0) \leq r_2,
$$
where $m \in \mathbb{N}$, $\alpha_i,c_i$, $r_2$, $r_1+r_2$  are positive constants, 
and $k_i(\cdot)$ and $\tilde{k}_i(\cdot)$ are non-negative functions that are compactly supported on $\R$.
\end{remark}
Next, we turn our attention to environments with one shift. 
Let $R_{i,-},R_{i,+}$ be fixed constants. For $p \in \mathbb{R}$, define $\lambda_{-}(p)$ and $\lambda_+(p)$ implicitly by 
$$
0 = \Delta_{\pm}(\lambda,p) = -\lambda + p^2 +R_{1,\pm}+ R_{2,\pm}\int_0^{\tau_0} \int_\mathbb{R} \Gamma(t,y) e^{p y - \lambda \tau}\,dyd\tau, 
$$
Define $(c^*_-,\mu^*_-)$ and $(c^*_+,\mu^*_+)$ by
$$
0<c^*_{\pm} =  \inf_{p >0} \frac{\lambda_{\pm}(p)}{p} = \frac{\lambda_1(\mu^*_\pm)}{\mu^*_\pm}. 
$$
\begin{theorem}\label{thm:4}
    Assume {\rm (H1)-(H5)} and, in addition, for $i=1,2$, there are constants $R_{i,-},R_{i,+}$ and $c_1>0$ such that
    \begin{equation}\label{eq:onesup}
    R_i(s)=\begin{cases}
    R_{i,+}  &\text{ if }s\geq c_1 \\
    R_{i,-}  &\text{ if }s< c_1 
    \end{cases}\, \text{ for every } s\in\R,\quad \underline{R}_i(s)=\begin{cases}
    R_{i,+}  &\text{ if }s> c_1 \\
    R_{i,-}  &\text{ if }s\leq c_1 
    \end{cases}\, \text{ for a.e. } s\in \R,   
    \end{equation}
    and
    $$
    R_{i,-} \leq R_{i,+}\quad \text{ and }\quad R_{1,+}+R_{2,+}>R_{1,-}+R_{2,-}>0.
    $$
    Then \eqref{eq:1.5} holds, and  the rightward spreading speed $\hat{s}^\mu$ can be given by (see Figure \ref{fig:cmu})

 (i) $\mu \in (0,\mu^*_+]$, then
 	\begin{equation}\label{eq:1.18}
\hat 	s^\mu(c_1)=\begin{cases}
\lambda_+(\mu)/\mu &\text{ if  } c_1\le \lambda_+(\mu)/\mu,\\
	\lambda_-(\underline p(c_1,\mu))/\underline p(c_1,\mu) & \text{ if } c_1>  \lambda_+(\mu)/\mu\text{ and } \underline p(c_1,\mu) <\mu^*_- \\
	c^*_-,& \text{otherwise}
	\end{cases}
	\end{equation}   
where    $\underline{p}(c_1,\mu)$ is the smallest root of
    \begin{equation}\label{eq:up'}
    c_1p-\lambda_-(p)=c_1\mu-\lambda_+(\mu).
    \end{equation}
    
(ii) $\mu \in (\mu^*_+, \infty)$, then    
 	\begin{equation}\label{eq:1.20}
\hat 	s^\mu(c_1)=\begin{cases}
c^*_+ &\text{ if  } c_1\le c^*_+,\\
	\lambda_-(\bar p(c_1))/\bar p(c_1) & \text{ if } \lambda'_+(\mu^*_+)<c_1\le \lambda'_+(\mu)  \text{ and } \bar p(c_1)< \mu^*_-,\\
	\lambda_-(\underline p)/\underline p & \text{ if } c_1>  \lambda'_+(\mu)\text{ and } \underline p(c_1,\mu) <\mu^*_-, \\
	c^*_-,& \text{otherwise}.
	\end{cases}
	\end{equation}  
		where  $\underline p(c_1,\mu)$ is the smallest root of \eqref{eq:up'} and $\bar p(c_1)$ is the smallest root of 
	\begin{equation}\label{eq:op'}
	c_1p-\lambda_-(p)=c_1\Psi_+(c_1)-\lambda_+(\Psi_+(c_1))
	\end{equation}
	In particular, if $\mu=\infty$, then
\begin{equation}\label{eq:1.22}
	\hat s^\infty(c_1)=\begin{cases} 
	c^*_+ & \text{ if } c_1\le c^*_+,\\
	\lambda_-(\bar p(c_1))/\bar p(c_1) & \text{ if } c_1\in(c^*_+,\bar c_1),\\
	c^*_-& \text{ if } c_1\ge\bar c_1.
	\end{cases}
	\end{equation}
	where
	$\bar c_1$ is the  unique positive number such that $\bar{p}(\bar c_1)=\mu^*_-$.
\end{theorem}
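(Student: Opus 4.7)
The plan is to apply Theorems \ref{thm:1.3} and \ref{thm:1.3b} to reduce identification of the spreading speed to an explicit computation of the free boundary point $\hat{s}^\mu$ of the one-dimensional Hamilton-Jacobi equation \eqref{rhoeq1''} subject to \eqref{eq:hjbc}, now with $R_1,R_2$ piecewise constant with a single jump at $s=c_1$ as in \eqref{eq:onesup}. The key observation is that in each of the two regions $\{0<s<c_1\}$ and $\{s>c_1\}$, every linear function $\rho(s)=ps-\lambda$ solves the interior equation of \eqref{rhoeq1''} (where $\rho>0$) if and only if $\Delta_\pm(\lambda,p)=0$, i.e.\ $\lambda=\lambda_\pm(p)$; moreover the convex conjugate $\hat{L}_\pm(s):=\sup_{p>0}\{ps-\lambda_\pm(p)\}$ provides the natural envelope of these linear solutions, whose unique positive zero is $c^*_\pm$.

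I then build a candidate for $\hat{\rho}^\mu$ by gluing linear (or enveloping) profiles across $s=c_1$. In the outer region $s>c_1$ the constraint $\hat\rho^\mu(s)/s\to\mu$ selects the linear profile $\rho_+(s)=\max\{\mu s-\lambda_+(\mu),0\}$ when $\mu\in(0,\mu^*_+]$, and the envelope $\rho_+(s)=\max\{\hat L_+(s),0\}$ (linear of slope $\mu^*_+$ near its zero at $c^*_+$) when $\mu\in(\mu^*_+,\infty]$. In the inner region $\{0<s<c_1\}$ one extends by a linear profile $\rho_-(s)=qs-\lambda_-(q)$; continuity at $s=c_1$ yields precisely the matching equations \eqref{eq:up'} and \eqref{eq:op'}, and selecting the \emph{smallest} positive root $\underline p$ or $\bar p$ is forced by the viscosity sub-solution property at the interface kink.

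Comparing the positions of the two candidate free boundaries then produces the three regimes in \eqref{eq:1.18}--\eqref{eq:1.22}: (a) if the outer profile already vanishes before reaching $c_1$, local pulling from the right gives $\hat{s}^\mu=\lambda_+(\mu)/\mu$, or $c^*_+$ in the high-$\mu$ case; (b) if the matched inner profile vanishes inside $[0,c_1)$ while its slope is below $\mu^*_-$, nonlocal pulling through the interface yields $\hat{s}^\mu=\lambda_-(q)/q$ for the appropriate root $q\in\{\underline p,\bar p\}$; (c) if the matched slope exceeds $\mu^*_-$, the inner linear profile is no longer minimizing and the solution saturates to $\hat{L}_-$, giving $\hat{s}^\mu=c^*_-$. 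The threshold conditions ($c_1=\lambda_+(\mu)/\mu$, $c_1=\lambda'_+(\mu)$, $c_1=\bar c_1$) correspond precisely to transitions where the candidate free boundary crosses $c_1$ or collides with the envelope.

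The principal technical step is to verify that the piecewise candidate is a genuine viscosity solution across the interface $s=c_1$, where a downward kink in the slope typically appears. This reduces to checking $\underline p(c_1,\mu)\le\mu$ and $\bar p(c_1)\le\mu^*_+$ (both automatic from the smallest-root selection) and using convexity of the Hamiltonian in $p$ to rule out any admissible test function touching $\rho$ from below at $c_1$. Uniqueness of $\hat\rho^\mu$ then follows from Proposition \ref{prop:1.2}, since piecewise-constant $R_2$ eventually equal to $R_{2,+}$ satisfies \textup{(H6)}; reading off the free boundary of the constructed explicit solution yields the asserted formulas \eqref{eq:1.18}, \eqref{eq:1.20}, and \eqref{eq:1.22}.
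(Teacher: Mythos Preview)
Your approach is essentially the same as the paper's: reduce to Proposition~\ref{prop:1.2} via Theorems~\ref{thm:1.3}--\ref{thm:1.3b}, then exhibit an explicit piecewise candidate $\rho_\mu$ built from linear pieces $ps-\lambda_\pm(p)$ and Legendre envelopes, matched by continuity at $s=c_1$, and verify case-by-case that it is the unique viscosity solution. The paper organizes this as Lemmas~\ref{thm:4(i)}--\ref{thm:4(iii)} with exactly the same case split.

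One point needs correction. At the interface $s=c_1$ the kink is \emph{convex} (slope jumps up: $\rho_\mu'(c_1-)=\underline p<\mu=\rho_\mu'(c_1+)$, or $\bar p<\Psi_+(c_1)$), not ``downward''. This indeed makes the sub-solution property automatic, since no $C^1$ test function can touch from above at such a corner. But for the super-solution property you do \emph{not} ``rule out'' test functions touching from below at $c_1$; on the contrary, every $\phi$ with $\phi'(c_1)\in[\underline p,\mu]$ (resp.\ $[\bar p,\Psi_+(c_1)]$) is admissible, and you must verify the inequality for all such slopes. The paper does this by writing the left-hand side as $\Delta_+\bigl(c_1\phi'-\rho_\mu(c_1),\phi'\bigr)-\Delta_+\bigl(\lambda_+(\phi'),\phi'\bigr)$ and using that $\Delta_+(\lambda,p)$ is decreasing in $\lambda$ together with $c_1\phi'-\rho_\mu(c_1)\le\lambda_+(\phi')$, which follows from the Legendre-transform maximum (see the computations in Cases~(ii)--(iii) of Lemma~\ref{lem:muL}). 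Once you replace your last paragraph with this verification, the argument is complete and matches the paper.
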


\begin{remark}
A sufficient condition of {\rm(H4)-(H5)} and \eqref{eq:onesup} is when there exists $c_1 >0$ such that  $f_i(t,x,u) = f_i(x-c_1t,u)$ satisfying  $$\begin{cases}R_{i,-}=\partial_uf_i(-\infty,0)\leq\partial_uf_i(y,0)\leq \partial_u f_i(+\infty,0)=R_{i,+}\quad \text{ holds for all } y\in\R,\\
R_{1,+}+ R_{2,+}> R_{1,-}+ R_{2,-}>0.\end{cases}$$
\end{remark}
\vspace{-1cm}
\begin{figure}[H]
	\begin{center}
	    \subfloat[]
		{	\label{fig}\includegraphics[height=45mm, width=70mm]{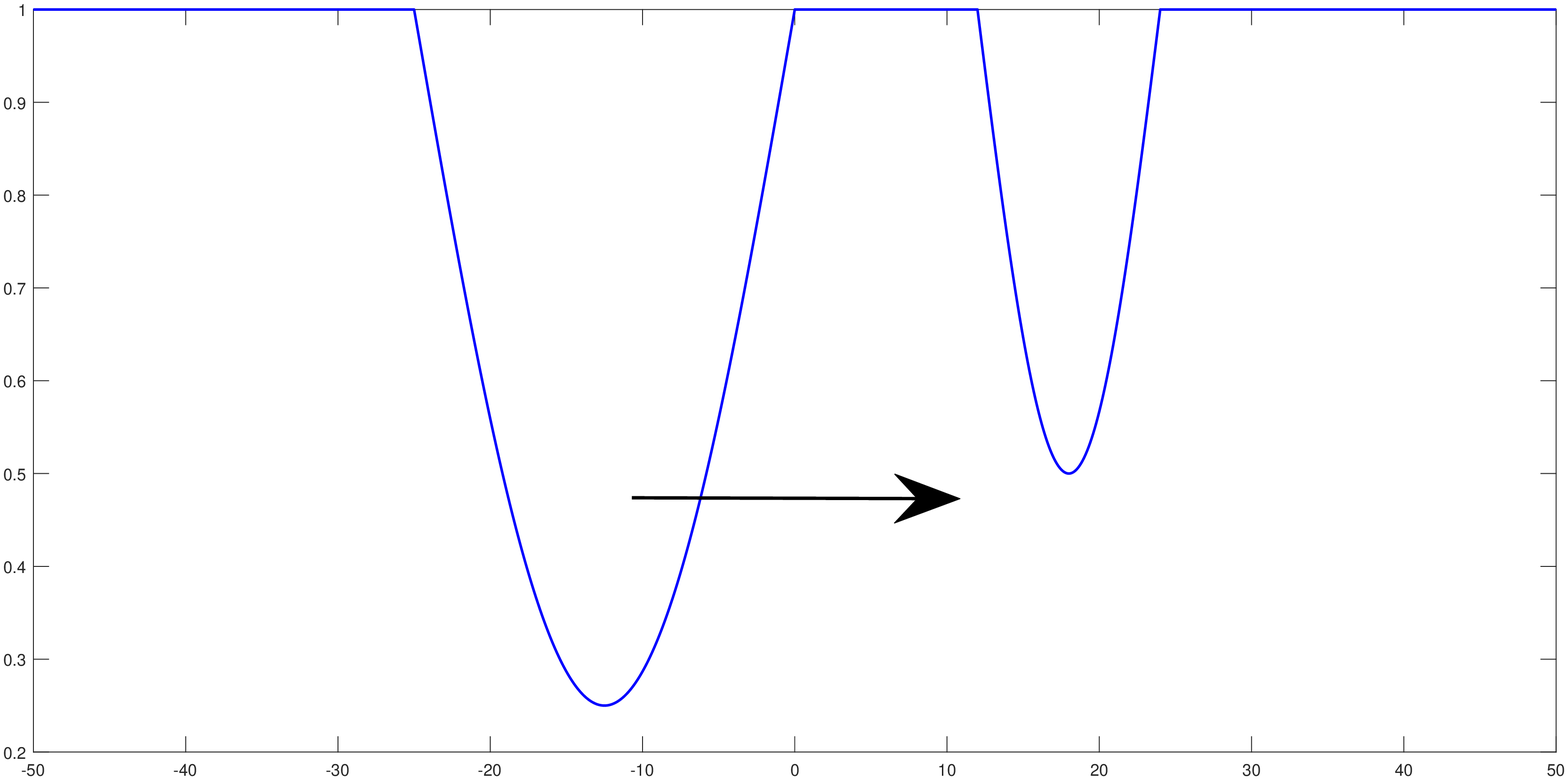}}
		\subfloat[]	{\label{fig2}\includegraphics[height=45mm, width=70mm]{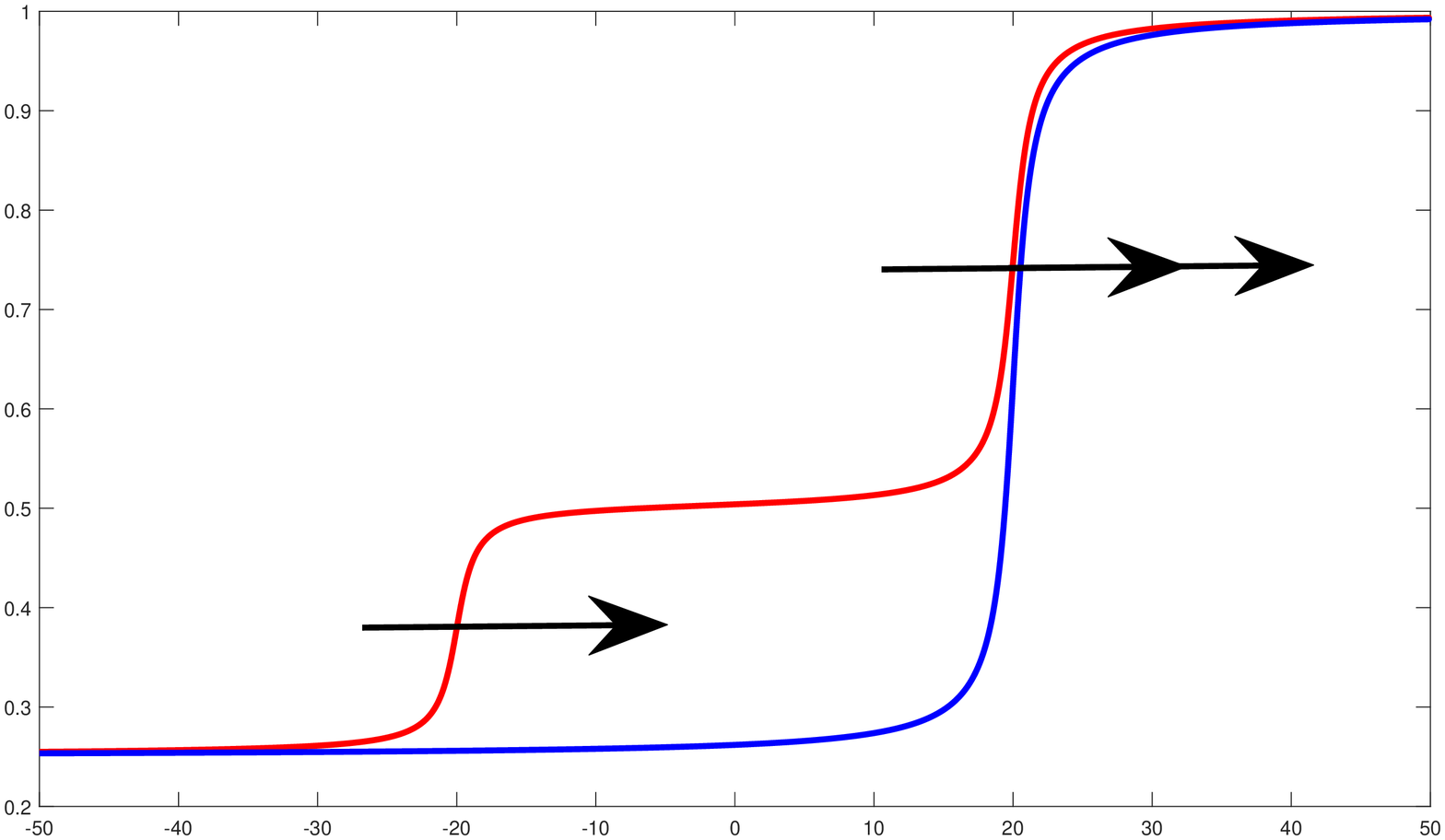}}
	\end{center}
			\vspace{-0.5cm}
{\small 	\caption[]{In the left panel, $r(t,x)=r_0-r_1(x-c_1t)$ for a compactly supported, non-negative and bounded function $r_1$ with $r_0,c_1>0$ where Theorem \ref{thm:kpp1} applies. In the right panel, blue curves represents $r(t,x)=\tilde{r}(x-c_1t)$ for a non-decreasing function $\tilde{r}$ with	$1/4=\tilde{r}(-\infty)< \tilde{r}(+\infty)=1$ where Theorem \ref{thm:kppp} applies; red curves represents $r(t,x)=\alpha\tilde{r}(x-c_1t)+(1-\alpha)\tilde{r}(x-c_2t)$ for $c_1>c_2>0$ and $\alpha\in(0,1/2)$, where Theorem \ref{thm:kppp'} applies.} }
\end{figure} 
\begin{figure}[H]
    \centering
  \includegraphics[height=56mm, width=72mm]{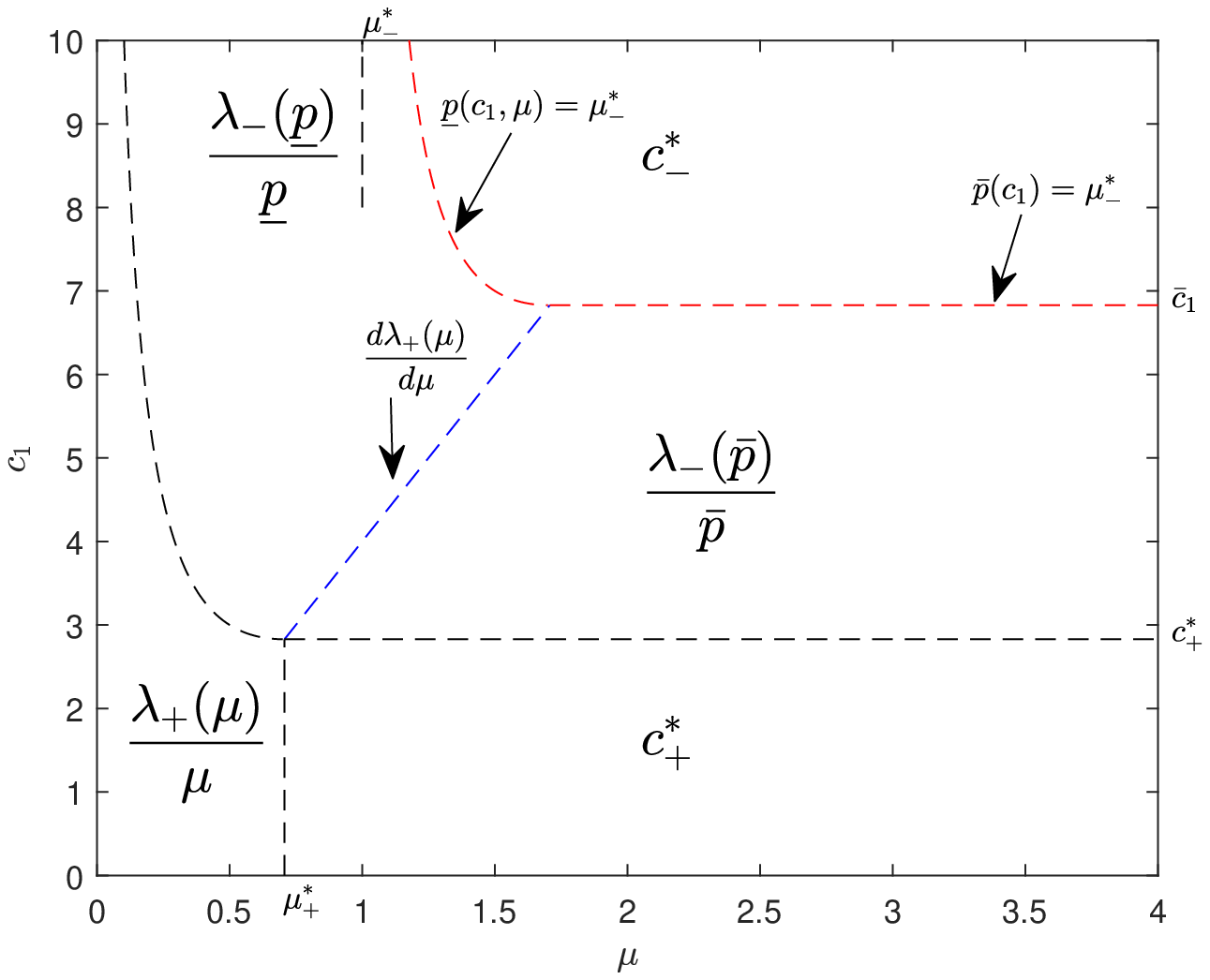}
{\small\caption{\label{fig:cmu}Parameter regions in $(\mu,c_1)$-plane corresponding to various cases of Theorem \ref{thm:4}.}}
  
\end{figure}

\subsection{Applications to the Fisher-KPP model}
In this subsection, we state our new results for the Fisher-KPP equation \eqref{eq:model4},
which could be easily derived from the results in last section or the arguments in Section 3.
Throughout this subsection, we impose the following assumption on $r$.

\noindent{\rm(F)} $r\in L^\infty(\R\times\R)$ and $\liminf\limits_{M \to \infty} \left[ \inf\limits_{[M,\infty)\times[M,\infty)} r(t,x)\right] >0$.

First, we show a new spreading result concerning a class of asymptotically homogeneous environment, for which there is a constant $r_0>0$ such that 
\begin{equation}\label{eq:F1}
\limsup\limits_{t \to \infty, \,s' \to s} r\left(t,s't \right) = r_0 \,\, \text{for every }s \in (0,\infty),\\
\liminf\limits_{t \to \infty,\, s' \to s} r\left(t,s't \right) = r_0 \,\,\text{ for a.e. }s \in (0,\infty).
%
\end{equation}
An example is $r(t,x) = r_0 - \int_0^\infty r_1(x-st) d\mu(s)$, where $r_1$ is a non-negative function with compact support, and $\mu$ is $\sigma$-finite measure on $[0,\infty)$. 
See Figure \eqref{fig} for the prototypical example $r(t,x) = r_0 - r_1(x-c_1t)$.

\begin{theorem}\label{thm:kpp1}
Consider the Cauchy problem \eqref{eq:model4}, with initial data $u_0(x)$ satisfying {\rm(IC$_\mu$)} for some $\mu \in (0,+\infty]$, respectively.
If $r(t,x)$ satisfies {\rm(F)}, and also
\eqref{eq:F1} for some constant $r_0>0$, 
then the spreading speed $\hat s^\mu$ is given by \eqref{eq:snlp}. Furthermore, it can be given explicitly as follows
\begin{equation}\label{eq:1.24b}
\hat s^\mu = \mu + \frac{r_0}{\mu} \quad \text{ if }\mu \in (0,\sqrt{r_0}),\quad \text{ and }\quad  \hat s^\mu  =  2\sqrt{r_0} \quad \text{ if } \mu \in [\sqrt{r_0},\infty].
\end{equation}
\end{theorem}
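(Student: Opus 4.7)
The plan is to deduce Theorem \ref{thm:kpp1} from Theorems \ref{thm:1.3}--\ref{thm:1.3b} specialized to $f_1(t,x,u)=(r(t,x)-u)u$, $f_2\equiv 0$, and then to solve the reduced Hamilton--Jacobi equation \eqref{rhoeq1''} explicitly to extract $\hat s^\mu$. With $f_2\equiv 0$, hypotheses (H1)--(H3) are immediate, and by Remark \ref{rmk:746} the list (H1)--(H5) collapses to the single requirement $(\tilde{\rm H})$: $R_1$ locally monotone, $\underline R_1>0$ on $[0,\infty)$, and $R_1=\underline R_1$ a.e. The asymptotic homogeneity \eqref{eq:F1} says precisely $R_1\equiv r_0$ (trivially locally monotone) and $\underline R_1 = r_0$ a.e., while (F) forces $\underline R_1(s)>0$ for every $s>0$ because any ray $(t,s't)$ eventually enters the positive quadrant $[M,\infty)^2$ on which $r$ is bounded below by a positive constant; the same bound supplies (H5) via the sufficient condition noted after (H5). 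For the $\mu=\infty$ case, (H6) is trivial since $R_2\equiv 0$.

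Applying Theorems \ref{thm:1.3}--\ref{thm:1.3b} therefore yields $\hat s^\mu = \sup\{s:\hat\rho^\mu(s)=0\}$, where $\hat\rho^\mu$ is the unique non-decreasing viscosity solution (Proposition \ref{prop:1.2}) of
\begin{equation*}
\min\bigl\{\rho,\; \rho - s\rho' + (\rho')^2 + r_0\bigr\} = 0 \ \text{ in }(0,\infty), \qquad \hat\rho^\mu(0) = 0, \quad \lim_{s\to+\infty}\frac{\hat\rho^\mu(s)}{s}=\mu.
\end{equation*}
Every affine $s\mapsto ps-(p^2+r_0)$ solves the PDE classically, which suggests building $\hat\rho^\mu$ as an upper envelope of such pieces truncated at $0$.

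I would then exhibit $\hat\rho^\mu$ explicitly. For $\mu\in(0,\sqrt{r_0})$ take $\hat\rho^\mu(s):=\max\{0,\mu s-(\mu^2+r_0)\}$: on each smooth piece the PDE holds, and at the kink $s_0=\mu+r_0/\mu$ the only nontrivial check is the super-solution condition $q^2-s_0 q + r_0\ge 0$ for every $q\in D^-\hat\rho^\mu(s_0)=[0,\mu]$, i.e.\ $(q-\mu)(q-r_0/\mu)\ge 0$, which is immediate since $\mu<\sqrt{r_0}<r_0/\mu$. For $\mu\in[\sqrt{r_0},\infty]$ take
\begin{equation*}
\hat\rho^\mu(s):=\begin{cases} 0, & 0\le s\le 2\sqrt{r_0}, \\ s^2/4 - r_0, & 2\sqrt{r_0}\le s\le 2\mu, \\ \mu s-(\mu^2+r_0), & s\ge 2\mu, \end{cases}
\end{equation*}
(with the middle branch extending on $[2\sqrt{r_0},\infty)$ when $\mu=\infty$); this candidate is convex, $C^1$ across $s=2\mu$, and the super-solution check at the kink $s=2\sqrt{r_0}$ reduces to $(q-\sqrt{r_0})^2\ge 0$. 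In all cases $\hat\rho^\mu(0)=0$ and $\hat\rho^\mu$ is non-decreasing with asymptotic slope $\mu$, so the uniqueness statement in Proposition \ref{prop:1.2} identifies them as the genuine solutions; inspecting the free boundary yields $\hat s^\mu = \mu + r_0/\mu$ when $\mu<\sqrt{r_0}$ and $\hat s^\mu = 2\sqrt{r_0}$ when $\mu\ge\sqrt{r_0}$, which is exactly \eqref{eq:1.24b}. The only mild obstacle is the viscosity verification at the kinks, and this is automatic because both candidates are convex, so the super-differential is empty and the sub-solution test is vacuous.
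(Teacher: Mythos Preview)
Your argument is correct and coincides with the paper's in substance: the paper deduces Theorem~\ref{thm:kpp1} from the general asymptotically homogeneous result (Theorem~\ref{thm:nonkpp1}) by taking $f_2\equiv 0$ and computing $\lambda(p)=p^2+r_0$, $\mu^*=\sqrt{r_0}$ in formula~\eqref{eq:R22}, and the proof of Theorem~\ref{thm:nonkpp1} itself builds exactly the explicit viscosity solutions $\rho_\mu$ you wrote down (cf.\ \eqref{eq:Hrho2'} and \eqref{eq:Hrho1}) and checks the super-solution inequality at the kinks in the same way. You simply inlined that construction instead of invoking the intermediate theorem, so the two proofs are essentially identical.
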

\begin{remark}
In \eqref{eq:F1}, the convergence of limit superior ``everywhere" cannot be relaxed to ``almost everywhere", because it is possible for locked waves to form, i.e. $c^* = \max\{2\sqrt{r_0},c_1\}$ \cite{Scheel_2014}. 

On the other hand,  if the condition  ``almost everywhere" in the convergence of limit inferior is strengthened to ``everywhere", the spreading result is proved in
\cite[Proposition 3.1]{Nadin2020}. 
\end{remark}
More generally, we observe that (H1)-(H6) are satisfied for the classical KPP equation \eqref{eq:model4} when
$$
r(t,x) = \sum_{i=1}^m r_i(x-c_1 t)
$$
such that $c_i \in \mathbb{R}$ are distinct, and for each $i$,    
$r_i \in C(\mathbb{R})$ is strictly positive and monotone; see Remark \ref{rmk:746}.  Hence, the spreading speed can be characterized by the free-boundary problem \eqref{eq:snlp}. 
Below, we completely work out the spreading speed when there is only one shifting speed.
\begin{theorem}\label{thm:kppp}
Consider the Cauchy problem \eqref{eq:model4}, with initial data $u_0(x)$ satisfying {\rm(IC$_\mu$)} for some $\mu \in (0,\infty]$, respectively. If $r(t,x)$ satisfies {\rm(F)}, and there are constants $r_2 > r_1 >0$ such that
$$
\limsup_{\epsilon\to0
	\atop(t',x')\to(t,x)} r\left( \frac{t'}{\epsilon}, \frac{x'}{\epsilon}\right) = \begin{cases}
	r_2  &\text{ if }x \geq c_1 t,\\
	r_1 &\text{ if }x < c_1 t.
	\end{cases}\,\,\text{ for all }(t,x) \in (0,\infty)\times\mathbb{R},
$$
and 
$$
\liminf_{\epsilon\to0
	\atop(t',x')\to(t,x)} r\left( \frac{t'}{\epsilon}, \frac{x'}{\epsilon}\right) = \begin{cases}
	r_2  &\text{ if }x > c_1 t,\\
	r_1 &\text{ if }x \leq c_1 t.
	\end{cases}\,\,\text{ for a.e. }(t,x) \in (0,\infty)\times\mathbb{R}.
$$
(e.g. $r(t,x) = \tilde{r}(x-c_1t)$ for some increasing function $\tilde r$ such that $\tilde{r}(-\infty) = r_1$ and $\tilde{r}(\infty) = r_2$, see figure \eqref{fig2}.) 
Then the spreading speed $\hat s^\mu$ is given by \eqref{eq:snlp}. Furthermore, it can be given explicitly as follows.

%
 %
  (i) $\mu\in(0, \sqrt{r_1}]$
\begin{equation}\label{123}
	\hat s^\mu(c_1)=\begin{cases} 
		\mu+\frac{r_2}{\mu} & \text{ if } c_1\in(-\infty, \mu+\frac{r_2}{\mu} ],\\
	\frac{c_1-\sqrt{(c_1-2\mu)^2+4(r_2-r_1)}}{2}+\frac{2r_1}{c_1-\sqrt{(c_1-2\mu)^2+4(r_2-r_1)}} & \text{ if } c_1\in(\mu+\frac{r_2}{\mu},+\infty),
	\end{cases}
\end{equation}
(ii) $\mu\in(\sqrt{r_1},\sqrt{r_2})$
\begin{equation}\label{1234}
	\hat s^\mu(c_1)=\begin{cases} 
		\mu+\frac{r_2}{\mu} & \text{ if } c_1\in(-\infty, \mu+\frac{r_2}{\mu} ],\\
	\frac{c_1-\sqrt{(c_1-2\mu)^2+4(r_2-r_1)}}{2}+\frac{2r_1}{c_1-\sqrt{(c_1-2\mu)^2+4(r_2-r_1)}} & \text{ if } c_1\in(\mu+\frac{r_2}{\mu},\frac{\mu^2+r_2-2r_1}{\mu-\sqrt{r_1}}),\\
	2\sqrt{r_1} & \text{ if } c_1\in[\frac{\mu^2+r_2-2r_1}{\mu-\sqrt{r_1}}.+\infty),
	\end{cases}
\end{equation}
(iii) $\mu\in[\sqrt{r_2},\sqrt{r_1}+\sqrt{r_2-r_1})$
\begin{equation}\label{12345}
	\hat s^\mu(c_1)=\begin{cases} 
	2\sqrt{r_2} & \text{ if } c_1\in(-\infty, 2\sqrt{r_2} ],\\
		\frac{c_1}{2}-\sqrt{r_2-r_1}+\frac{r_1}{\frac{c_1}{2}-\sqrt{r_2-r_1}}& \text{ if } c_1\in(2\sqrt{r_2},2\mu],\\
	\frac{c_1-\sqrt{(c_1-2\mu)^2+4(r_2-r_1)}}{2}+\frac{2r_1}{c_1-\sqrt{(c_1-2\mu)^2+4(r_2-r_1)}} & \text{ if } c_1\in(2\mu,\frac{\mu^2+r_2-2r_1}{\mu-\sqrt{r_1}}),\\
	2\sqrt{r_1} & \text{ if } c_1\in[\frac{\mu^2+r_2-2r_1}{\mu-\sqrt{r_1}}.+\infty),
	\end{cases}
\end{equation}
(iv) $\mu\in[\sqrt{r_1}+\sqrt{r_2-r_1},\infty]$
\begin{equation}\label{12}
	\hat s^\mu(c_1)=\begin{cases} 
		2\sqrt{r_2} & \text{ if } c_1\in(-\infty, 2\sqrt{r_2}],\\
		\frac{c_1}{2}-\sqrt{r_2-r_1}+\frac{r_1}{\frac{c_1}{2}-\sqrt{r_2-r_1}} & \text{ if } c_1\in(2\sqrt{r_2},2\sqrt{r_1}+2\sqrt{r_2-r_1}),\\
		2\sqrt{r_1}& \text{ if } c_1\in[2\sqrt{r_1}+2\sqrt{r_2-r_1},\infty).
	\end{cases}
\end{equation}
\end{theorem}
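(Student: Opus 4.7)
The plan is to recognize this as a specialization of Theorem \ref{thm:4} to the scalar Fisher-KPP model with $f_2\equiv 0$, and then to solve the resulting quadratics by hand. First, with $f_1(t,x,u)=(r(t,x)-u)u$ and $f_2\equiv 0$, we have $R_{2,-}=R_{2,+}=0$, $R_{1,-}=r_1$, $R_{1,+}=r_2$, and the assumptions {\rm (H1)}--{\rm (H5)} are easily verified under {\rm (F)} (see Remark \ref{rmk:746}); in particular, (H5) follows from the lower bound $\liminf_{M\to\infty} \inf_{[M,\infty)\times[M,\infty)} r>0$. The defining relation $\Delta_\pm(\lambda,p)=0$ collapses to $\lambda_\pm(p) = p^2 + r_{1,2}$, so
$$c^*_\pm = \inf_{p>0}\frac{p^2+r_{1,2}}{p} = 2\sqrt{r_{1,2}}, \qquad \mu^*_\pm = \sqrt{r_{1,2}}.$$

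Next, I would solve the auxiliary equations \eqref{eq:up'}, \eqref{eq:op'} in closed form. Equation \eqref{eq:up'} becomes the quadratic $p^2 - c_1 p + (c_1\mu-\mu^2-r_2+r_1)=0$, whose smallest root is
$$\underline{p}(c_1,\mu) = \frac{c_1 - \sqrt{(c_1-2\mu)^2 + 4(r_2-r_1)}}{2}.$$
Since $\lambda_+'(p)=2p$, one has $\Psi_+(c_1)=c_1/2$, and \eqref{eq:op'} reduces to $(p-c_1/2)^2 = r_2-r_1$, giving $\bar p(c_1) = c_1/2 - \sqrt{r_2-r_1}$. Under these identifications, the formulas $\lambda_\pm(p)/p = p + r_{1,2}/p$ produce exactly the expressions on the right-hand sides of \eqref{123}--\eqref{12}.

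The remaining work is a case-by-case match between the trichotomy $\{c_1\leq \lambda_+(\mu)/\mu,\ \underline{p}<\mu^*_-,\ \text{otherwise}\}$ in Theorem \ref{thm:4}(i) (or its analogue in Theorem \ref{thm:4}(ii)) and the four ranges of $\mu$ in the statement. The key computation is locating the threshold $c_1$ at which $\underline{p}(c_1,\mu)=\sqrt{r_1}$; substituting into the quadratic yields $c_1 = (\mu^2+r_2-2r_1)/(\mu-\sqrt{r_1})$ when $\mu>\sqrt{r_1}$, while for $\mu\leq\sqrt{r_1}$ one checks that $\underline{p}<\sqrt{r_1}$ holds throughout the regime $c_1>\mu+r_2/\mu$ because $\underline{p}$ is increasing in $c_1$ with limit $\mu$. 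Similarly, the threshold $\bar p(c_1)=\sqrt{r_1}$ occurs at $c_1 = 2\sqrt{r_1}+2\sqrt{r_2-r_1}$, and the inequality $\lambda'_+(\mu) = 2\mu \gtreqless (\mu^2+r_2-2r_1)/(\mu-\sqrt{r_1})$ is equivalent to $\mu\gtreqless \sqrt{r_1}+\sqrt{r_2-r_1}$, which is precisely the dividing line between cases (iii) and (iv). Continuity of the spreading speed across each boundary is easy to verify: for example, at $c_1 = \mu + r_2/\mu$, using $\underline{p}\,\bar{p}_0 = r_1$ where $\bar{p}_0$ is the larger root, $\underline{p}+r_1/\underline{p} = \underline{p}+\bar{p}_0 = c_1 = \mu+r_2/\mu$.

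The $\mu=\infty$ endpoint in case (iv), which yields \eqref{eq:1.22} when combined with the finite-$\mu$ analysis, is covered by Theorem \ref{thm:1.3b}; hypothesis {\rm (H6)} is trivially satisfied here since $R_2\equiv 0$. The main obstacle is purely organizational: carefully enumerating the $(\mu,c_1)$ sub-regions produced by the three competing thresholds $\mu + r_2/\mu$, $2\mu$, $(\mu^2+r_2-2r_1)/(\mu-\sqrt{r_1})$, $2\sqrt{r_2}$, and $2\sqrt{r_1}+2\sqrt{r_2-r_1}$, and verifying at each boundary that the formulas of Theorem \ref{thm:4} glue continuously into the piecewise formulas \eqref{123}--\eqref{12}. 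No further PDE input is needed beyond what Theorem \ref{thm:4} already supplies.
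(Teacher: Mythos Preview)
Your proposal is correct and follows essentially the same approach as the paper's own proof: both specialize Theorem \ref{thm:4} to the case $f_2\equiv 0$, read off $\lambda_\pm(p)=p^2+r_{1,2}$, $\mu^*_\pm=\sqrt{r_{1,2}}$, solve the quadratics \eqref{eq:up'} and \eqref{eq:op'} explicitly for $\underline p$ and $\bar p$, and then translate the case splits of \eqref{eq:1.18} and \eqref{eq:1.20} into the four $\mu$-ranges. Your write-up is somewhat more explicit about the threshold computations (e.g.\ the equivalence $2\mu\gtreqless(\mu^2+r_2-2r_1)/(\mu-\sqrt{r_1})\iff\mu\gtreqless\sqrt{r_1}+\sqrt{r_2-r_1}$) than the paper, which omits these details.
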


Before we close, we also state the spreading speed when there are two shifts in the environment, which makes use of a previous result regarding a special case of \eqref{rhoeq1''} when $R_2\equiv0$ and $R_1$ is piecewise constant. (For the derivation, see Theorem C in \href{https://arxiv.org/pdf/1910.04217.pdf}
{https://arxiv.org/pdf/1910.04217.pdf}.)
\begin{theorem}\label{thm:kppp'}
Consider \eqref{eq:model4}, with initial data $u_0(x)$ satisfying {\rm(IC$_\infty$)}. Suppose $r(t,x)$ satisfies {\rm(F)}, and there are positive constants $1>r_2 > r_1 >0$ and $c_1 > c_2 > 0$ such that
$$
\limsup_{\epsilon\to0
	\atop(t',x')\to(t,x)} r\left( \frac{t'}{\epsilon}, \frac{x'}{\epsilon}\right) = \begin{cases}
	1   &\text{ if } x \geq c_1 t,\\
	r_2  &\text{ if }c_2 t \leq x < c_1 t,\\
	r_1 &\text{ if }x < c_2 t.
	\end{cases}\,\,\text{ for every }(t,x) \in (0,\infty)\times\mathbb{R},
$$
and 
$$
\liminf_{\epsilon\to0
	\atop(t',x')\to(t,x)} r\left( \frac{t'}{\epsilon}, \frac{x'}{\epsilon}\right) = \begin{cases}
	1     &\text{ if } x > c_1 t,\\
	r_2  &\text{ if }c_2 t <x \leq  c_1 t,\\
	r_1 &\text{ if }x \leq c_2 t.
	\end{cases}\,\,\text{ for a.e. }(t,x) \in (0,\infty)\times\mathbb{R}.
$$
(e.g. $r(t,x) = \frac{1-r_2}{1-r_1}\tilde{r}(x-c_1t) + \frac{r_2-r_1}{1-r_1}\tilde{r}(x-c_2t)$ for some increasing function $\tilde r$ such that $\tilde{r}(-\infty) = r_1$ and $\tilde{r}(\infty) =1$, see Figure \eqref{fig2}.) 
Then the spreading speed $\hat s^\mu$ is given by \eqref{eq:snlp}. Furthermore, it can be given explicitly as follows.
\begin{equation}
\hat s^\infty(c_1,c_2)=\begin{cases}
2 \,&\text{ for } c_1\le2\\
\mu+\frac{r_2}{\mu} \, &\text{ for } c_1>2, \mu<\sqrt{r_2} \text{ and } c_2\le \mu+\frac{r_2}{\mu}\\
2\sqrt{r_2} \, &\text{ for } \mu\ge \sqrt{r_2} \text{ and }  c_2\le 2\sqrt{r_2}\\
\bar p+\frac{r_1}{\bar p} \, &\begin{cases}\text{ for } c_1>2, \mu<\sqrt{r_2} ,\bar p<\sqrt{r_1} \text{ and } c_2>\mu+\frac{r_2}{\mu},\\
\text{ for } \mu\ge\sqrt{r_2}, c_2<2\mu,\bar p<\sqrt{r_1} \text{ and } c_2>2\sqrt{r_2}
\end{cases} \\
\underline p+\frac{r_1}{\underline p} \, &\text{ for } \mu\ge \sqrt{r_2}, c_2\ge 2\mu,\underline p<\sqrt{r_1} \text{ and } c_2>2\sqrt{r_2}, \\
2\sqrt{r_1} \, &\text{ otherwise, }
\end{cases}
\end{equation}
where \begin{equation}\mu=\frac{c_1}{2}-\sqrt{1-r_2},\quad \bar p=\frac{c_2}{2}-\sqrt{(\frac{c_2}{2}-\mu)^2+r_2-r_1},\quad\underline{p}=\frac{c_2}{2}-\sqrt{r_2-r_1}.
\end{equation}
\end{theorem}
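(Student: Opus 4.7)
The plan is to apply Theorem \ref{thm:1.3b} to reduce the spreading problem to computing the free boundary of the one-dimensional Hamilton--Jacobi equation \eqref{rhoeq1''}, and then to invoke Theorem~C of the cited reference to solve that reduced equation explicitly. Since $f_2\equiv 0$ in the Fisher--KPP equation \eqref{eq:model4}, we have $R_2\equiv\underline R_2\equiv 0$, so hypothesis (H6) is trivially satisfied. The assumed limits on $r(t'/\epsilon,x'/\epsilon)$ identify
\[
R_1(s)=\begin{cases}1 & s\ge c_1,\\ r_2 & c_2\le s<c_1,\\ r_1 & s<c_2,\end{cases}\qquad \underline R_1(s)=\begin{cases}1 & s>c_1,\\ r_2 & c_2<s\le c_1,\\ r_1 & s\le c_2,\end{cases}
\]
so these piecewise-constant functions agree a.e., with $\underline R_1>0$ on $[0,\infty)$, and (H4)(iii) holds (with $R_2\equiv 0$ piecewise constant and $R_1$ locally monotone). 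Condition (F) together with the remark following (H5) ensures (H5), while (H1)--(H3) are immediate from the KPP structure. Since the initial data satisfies (IC$_\infty$), Theorem \ref{thm:1.3b} yields $\hat s^\infty=\sup\{s\ge 0:\hat\rho^\infty(s)=0\}$, where $\hat\rho^\infty$ is the unique viscosity solution of
\[
\min\{\rho,\,\rho-s\rho'+|\rho'|^2+R_1(s)\}=0 \text{ in } (0,\infty),\quad \hat\rho^\infty(0)=0,\quad \hat\rho^\infty(s)/s\to\infty.
\]

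For this HJ problem with $R_2\equiv 0$ and $R_1$ piecewise constant, Theorem~C of the cited reference constructs $\hat\rho^\infty$ as the upper envelope, within each region of constancy $\{R_1\equiv r\}$, of the affine modes $\rho_p(s)=ps-(p^2+r)$ for $p\ge 0$ and the parabolic mode $\rho^{\star}(s)=s^2/4-r$; the global profile is then pinned down by continuity at the interfaces $s=c_1,\,c_2$ and by the growth condition at $+\infty$. In the outer region $R_1=1$, the parabolic mode has free boundary $s=2$; if $c_1\le 2$ this lies within the outer region itself, yielding $\hat s^\infty=2$ (the first case of the formula). If $c_1>2$, continuity with the parabolic mode $s^2/4-1$ at $s=c_1$ forces the middle region to carry an affine mode whose slope solves $pc_1-(p^2+r_2)=c_1^2/4-1$; the smaller root $\mu=c_1/2-\sqrt{1-r_2}$ is the admissible one and corresponds to ``nonlocal pull'' from the outer region, while the local spreading alternative gives the reference speed $2\sqrt{r_2}$ (applicable precisely when $\mu\ge\sqrt{r_2}$). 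Repeating the matching at $s=c_2$ with the middle mode inherited from above produces an affine slope $\bar p=c_2/2-\sqrt{(c_2/2-\mu)^2+r_2-r_1}$ in the outer-pulled case, or $\underline p=c_2/2-\sqrt{r_2-r_1}$ in the middle-locally-spreading case, whose free-boundary positions are $\bar p+r_1/\bar p$ and $\underline p+r_1/\underline p$ respectively; each is capped by the local innermost speed $2\sqrt{r_1}$, which takes over when $\bar p\ge\sqrt{r_1}$ or $\underline p\ge\sqrt{r_1}$. The six cases in the stated formula correspond exactly to the combinations ``outer pull vs.\ local spread'' selected at each of the two interfaces, modulated by whether the matched affine mode or the innermost parabolic mode is the slower.

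The main obstacle is the systematic case-by-case verification that in each regime the proposed piecewise profile is the unique viscosity solution with the prescribed boundary behavior. For each parameter window one must check that the matched slopes $\mu,\bar p,\underline p$ are admissible (nonnegative, selecting the smaller root of the relevant quadratic matching condition), that the assembled profile is continuous, nondecreasing, vanishes on $[0,\hat s^\infty]$, and solves the HJ equation in the viscosity sense across both interfaces and the free boundary. The dichotomies $c_1\lessgtr 2$, $\mu\lessgtr\sqrt{r_2}$, $c_2\lessgtr 2\sqrt{r_2}$, $c_2\lessgtr \mu+r_2/\mu$, $c_2\lessgtr 2\mu$, and $\bar p,\underline p\lessgtr\sqrt{r_1}$ listed in the formula are precisely the transitions between adjacent regimes, and one must verify the consistent cascade from the outer to the middle to the innermost region in each case; this matching structure is exactly what Theorem~C of the cited reference handles abstractly, so that the proof of the present theorem reduces to identifying the six parameter regimes and reading off $\hat s^\infty$ from the innermost active mode.
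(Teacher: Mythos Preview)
Your proposal is correct and follows the same route as the paper: the paper does not prove this theorem in-text but simply states that the derivation comes from Theorem~C of \cite{LLL_2020c}, after Theorem~\ref{thm:1.3b} reduces the problem to the one-dimensional Hamilton--Jacobi equation with $R_2\equiv 0$ and $R_1$ piecewise constant. Your write-up is in fact more detailed than what the paper provides, since you spell out the hypothesis verification and the matching mechanism that Theorem~C encapsulates.
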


\section{Proof of Theorems \ref{thm:1.3} and \ref{thm:1.3b}}\label{sec:2}

\subsection{Outline of the main arguments}

Let $u(t,x)$ be the unique solution of \eqref{Model} with initial data $\phi$ satisfying (IC$_\mu$) for some $\mu \in (0,\infty]$. 
To analyze the spreading behavior of $u(t,x)$, we
introduce the large time and large space scaling parameter $\ep$
\begin{equation}\label{eq:scaling}
u^\ep(t,x) = u\left(\frac{t}{\ep},\frac{x}{\ep}\right),
\end{equation}
and relate the limit of $u^\ep$ as $\ep \to 0$ to two Hamilton-Jacobi equations. The first one is time-dependent:
    \begin{equation}\label{whjEq1'}
\min\left\{w,H(x/t,\partial_t w, \partial_x w)\right\}=0 \quad \text{ for  }(t,x) \in (0,\infty)\times (0,\infty), 
\end{equation}
and the second one is time-independent:
\begin{equation}\label{rhoeq1'}
\min\left\{\rho, H(s,\rho-s\rho',\rho')\right\} = 0 \quad \text{ for }s \in (0,\infty),
\end{equation}
where
$$
H(s,q,p) = q + p^2 + R_1(s) + R_2(s) \int_0^{\tau_0}\int_\mathbb{R} \Gamma(\tau,y)e^{q\tau + py}\,dyd\tau.
$$
Note that $H(s,q,p)$ is u.s.c., since the functions $R_1(s)$ and $R_2(s)$ are u.s.c by (H4). 
\begin{remark}\label{rmk:Husc}
We choose to work with \eqref{whjEq1'} since it is more explicitly connected with \eqref{Model} and \eqref{rhoeq1'}. It is possible to rewrite \eqref{whjEq1'} into a more standard form: $\min\{w, \partial_t w + \tilde{H}(x/t,\partial_x w) \}= 0$, where $\tilde{H}(s,p)$ is defined implicitly by 
$$
H(s,q,p) = 0 \quad \text{ if and only if }\quad -q = \tilde{H}(s,p),
$$
by exploiting the monotonicity of $H$ in $q$. See Appendix \ref{sec:A} for details. 
\end{remark}

Indeed, under the scaling of \eqref{eq:scaling}, the problem \eqref{Model} can be rewritten as
\begin{equation}\label{LinearModel2}
\begin{cases}
\partial_t u^\ep=\ep\partial_{xx} u^\ep +\frac{1}{\ep} [F_\ep(t,x,u^\ep) + f_{1,\ep}(t,x,u^\ep)], \, t>0, x \in \mathbb{R},\\
u^\ep(\theta,x)=\phi(\theta/\ep,x/\ep),\quad (\theta,x)\in[-\ep\tau,0]\times\R,
\end{cases}
\end{equation}
where $$F_\ep(t,x,u^\ep)=\int_{0}^{\tau_0}\int_{\mathbb{R}}\Gamma(\tau,y) f_{2,\ep}(t-\ep \tau,x-\ep y,u^\ep(t-\ep \tau,x-\ep y)) dy d\tau$$ with $f_{2,\ep}(t,x,v):=f_2(t/\ep,x/\ep,v)$ and $f_{1,\ep}(t,x,u):=f_1(t/\ep,x/\ep,u)$.

The proof of Theorem \ref{thm:1.3} relies on a WKB approach in front propagation, which was introduced in \cite{BCS90,ES89,Souganidis1997}. It is based on the real phase defined by the Hopf-Cole transform
$$
w^\ep(t,x)=-\ep \log{u^\ep(t,x)}.
$$
The equation of $w^\ep$ is
\begin{equation}\label{LinearModel3}
\begin{cases}
\partial_t w^\ep-\ep\partial_{xx} w^\ep+|\partial_{x} w^\ep|^2+\frac{F_\ep(t,x,u^\ep)+ f_{1,\ep}(t,x,u^\ep)}{u^\ep}=0, \quad & t>0, x>0,\\
w^\ep(\theta,x)=-\ep\log \phi(\theta/\ep,x/\ep), \quad & x\ge0, \theta\in[-\ep\tau,0], \\
w^\ep(t,0)=-\ep \log u^\ep(t,0), \quad &  t\ge-\ep\tau.
\end{cases}
\end{equation}

In the following, we apply the half-relaxed limit method, due to Barles and Perthame \cite{BP1987}, to pass to the (upper and lower) limits of $w^\ep$. 
More precisely, for each $(t,x)\in (0,\infty)\times\mathbb{R}$, we set
\begin{equation}\label{eq:wstar}
w^*(t,x)=\limsup_{\epsilon\to0
	\atop(t',x')\to(t,x)} w^\epsilon(t',x'),\quad w_*(t,x)=\liminf_{\epsilon\to0
	\atop(t',x')\to(t,x)} w^\epsilon(t',x').
\end{equation}
We will show that $w^*$ and $w_*$ are respectively viscosity sub- and super-solution of the time-dependent problem \eqref{whjEq1'} (see Proposition \ref{prop:2.3}). 
To pass to the (local uniform) limit, it is necessary to establish the uniqueness of viscosity solution for such problems so that $w^*=w_*$ and hence $w^\ep$ converges uniformly as $\ep \to 0$. If the initial data of $u$ has compact support, then the initial data of $w^*,w_*$ can be infinite and the uniqueness does not follow from standard PDE proofs. 
Previously, this mathematical issue was tackled in \cite{ES89} by way of a correspondence with the value function of a zero sum, two player  differential game with stopping times and the dynamics programming principle; see also \cite{CLS1989} for a method based on semigroup method. Our main novelty here is to provide an elementary proof of the uniqueness based on the 1-homogeneity of $w^*$ (resp. $w_*$): 
\begin{align}
w^*(t,x) &= \limsup_{\epsilon\to0
	\atop(t',x')\to(t,x)} -\epsilon \log u\left(\frac{t'}{\ep},\frac{x'}{\ep}\right)  \notag\\
	&=  t\limsup_{\epsilon\to0
	\atop(t'',x'')\to(1,x/t)} -(\epsilon/t) \log u\left(\frac{t''}{\ep/t},\frac{x''}{\ep/t}\right) = tw^*(1,x/t). \label{eq:rhooo2}
\end{align}

Hence, there exists $\rho^*(s)$ and $\rho_*(s)$ such that
\begin{equation}\label{eq:rhooo}
w^*(t,x) = t\rho^*(x/t) \quad \text{ and }\quad w_*(t,x) = t \rho_*(x/t).
\end{equation}
We will show that $\rho^*(s)$ and $\rho_*(s)$ are respectively the sub- and super-solution of the problem \eqref{rhoeq1'} in a one-dimensional domain (see Lemma \ref{lem:samehj}). 
By showing a novel comparison result for \eqref{rhoeq1'}, we have $\rho^*(s) = \rho_*(s)$ (see Proposition \ref{cp:rho}), so that they can be identified with the unique solution $\hat\rho^\mu$ of \eqref{rhoeq1''}, which satisfies (see Proposition \ref{prop:1.2})
$$
\hat\rho^\mu(s) = 0 \quad \text{ in }[0,\hat{s}^\mu],\quad \text{ and }\quad \hat\rho^\mu(s) >0 \quad \text{ in }(\hat{s}^\mu,\infty).
$$
Hence, $w^*(t,x) = w_*(t,x) = t \hat\rho^\mu(x/t)$, and we see that $w^\ep(t,x)$ converges in $C_{loc}((0,\infty) \times \mathbb{R})$, and
$$
\lim\limits_{\ep \to 0} w^\ep(t,x) = 0 \quad \text{ for }0 \leq x <  \hat{s}^\mu t,\quad \text{ and }\quad \lim\limits_{\ep \to 0} w^\ep(t,x)  >0 \quad \text{ for  } x > \hat{s}^\mu t.
$$
From this, the asymptotic behavior of $u^\ep(t,x)$ can then be inferred.

Let $w^*$ and $w_*$ be the half-relaxed limits as given by \eqref{eq:wstar}. The following lemma indicates that $w^*$ and $w_*$ are well-defined and finite-valued everywhere. 

\begin{proposition}\label{prop:2.1}
 Let $\phi$ satisfy $({IC}_\mu)$ for some $\mu\in(0,\infty]$ and $w^\ep$ be the solution of \eqref{LinearModel3}, then 
	\begin{equation}\label{eq:prop:2.1.0}
	0 \leq w_*(t,x) \leq w^*(t,x) <\infty \quad \text{ for each }(t,x) \in (0,\infty)\times\mathbb{R},
	\end{equation}
	and
	\begin{equation}\label{eq:prop:2.1.1}
	w^*(t,0)=w_*(t,0)=0 \, \text{ for }t>0,  \quad w^*(0,x)=w_*(0,x) =\begin{cases}
	\mu x & \text{ if } \mu\in(0,\infty),\\
	\infty& \text{ if } \mu=\infty.
	\end{cases}  \text{for }x >0. 
		\end{equation}
\end{proposition}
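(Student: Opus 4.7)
The statement is equivalent to establishing uniform-in-$\epsilon$ two-sided estimates of the form $e^{-C_K/\epsilon} \leq u^\epsilon(t,x) \leq M$ on each compact set $K \subset (0,\infty)\times\mathbb{R}$, together with a direct asymptotic analysis of $w^\epsilon$ at the parabolic boundary. The ingredients are (H1)--(H3) for the $L^\infty$ control and linear minorization, (H5) for the cone positivity, and (IC$_\mu$) for the initial trace. I will split the argument into three steps.

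\textbf{Step 1 (upper bound on $u$; $w_* \geq 0$).} By (H2), the constant $M := \max\{L_0, \|\phi\|_\infty\}$ satisfies $G(t,x,M,M) \leq 0$ and $M \geq \phi$, hence serves as a time-independent supersolution of \eqref{Model}; a standard monotone-iteration/comparison argument as in \cite{XuXiao_2016} gives $0 \leq u \leq M$ on $[-\tau_0,\infty)\times\mathbb{R}$, so $w^\epsilon \geq -\epsilon \log M$ and $w_*(t,x) \geq 0$ everywhere on $(0,\infty)\times\mathbb{R}$.

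\textbf{Step 2 (lower bound on $u$; $w^* < \infty$).} Fix $(t_0,x_0) \in (0,\infty)\times\mathbb{R}$. By \eqref{eq:H2}, any sub-solution of the linearized equation $U_T = U_{XX} + (R_1 - \eta')U + (R_2 - \eta')\int\!\!\int \Gamma(\tau,y) U(T-\tau, X-y)\,dy d\tau$ that remains below $\delta_*$ is a sub-solution of \eqref{Model}. Plugging in the travelling-wave ansatz $\underline u(T,X) = a e^{-\mu(X - cT)}$ yields a dispersion relation $c\mu = \mu^2 + R_1 + R_2 \int\!\!\int \Gamma(\tau,y)e^{\mu y - c\mu \tau}\,dy d\tau$, with a finite root $c=c(\mu)$ thanks to the moment condition (H3). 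Choosing $a$ small enough that (IC$_\mu$) gives $\underline u(0,X) \leq \phi(0,X)$, and truncating by $\delta_*$, one obtains $u(T,X) \geq a e^{-\mu(X - cT)}$, and hence $w^\epsilon(t_0,x_0) \leq \mu x_0 - c\mu t_0 + o(1) < \infty$. The corresponding bound for $x_0 < 0$ is obtained by combining the (H5) cone positivity $u(T,X) \geq c_0$ on $\{0\leq X \leq \underline s T\}$ with a leftward-propagating exponential sub-solution $a e^{\mu(X + cT)}$ seeded at $X=0$. In the compactly-supported case $\mu = \infty$, the same argument works with any finite $\tilde\mu$ replacing $\mu$, again seeded by (H5).

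\textbf{Step 3 (traces).} At $t=0$, $w^\epsilon(0,x) = -\epsilon \log \phi(0, x/\epsilon)$, so when $\mu \in (0,\infty)$ the sandwich in (IC$_\mu$) yields
\[
\epsilon(\mu - o(1))(x/\epsilon) - \epsilon \log b \;\leq\; w^\epsilon(0,x) \;\leq\; \epsilon(\mu + o(1))(x/\epsilon) - \epsilon \log a,
\]
giving $w^\epsilon(0,x) \to \mu x$ uniformly on compacts of $(0,\infty)$; when $\mu = \infty$, (IC$_\infty$) implies $w^\epsilon(0,x) \geq kx - O(\epsilon)$ for every $k > 0$, so $w^\epsilon(0,x) \to \infty$. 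At $x=0$ for $t>0$, Step 1 supplies $w^\epsilon(t,0) \geq -\epsilon \log M$, and Step 2 (via (H5)) supplies $w^\epsilon(t,0) \leq -\epsilon \log c_0$, both tending to $0$, whence $w^*(t,0) = w_*(t,0) = 0$.

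\textbf{Expected obstacle.} The delicate ingredient is constructing a genuine sub-solution for the full integro-differential system in Step 2, since with non-trivial $f_2$ the system need not obey a comparison principle. The key is to exploit that the linear minorization \eqref{eq:H2} holds precisely in the regime $u \leq \delta_*$ where the exponential ansatz lives, and to patch with the universal upper bound $u \leq M$ from Step 1 in the complementary regime (where a lower bound of the form $u \geq \delta_*$ is trivially available).
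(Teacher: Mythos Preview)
Your overall architecture matches the paper's: bound $u$ above by a constant (Step~1), below by an exponential (Step~2), and read off the boundary traces (Step~3). But Step~3 contains a genuine gap. The quantities $w^*(0,x)$ and $w_*(0,x)$ are \emph{half-relaxed limits}
\[
w^*(0,x)=\limsup_{\epsilon\to0\atop(t',x')\to(0,x)} w^\epsilon(t',x'),\qquad
w_*(0,x)=\liminf_{\epsilon\to0\atop(t',x')\to(0,x)} w^\epsilon(t',x'),
\]
and therefore involve points $t'>0$ approaching~$0$, not just the slice $t'=0$. Your computation $w^\epsilon(0,x)=-\epsilon\log\phi(0,x/\epsilon)\to\mu x$ controls only the latter. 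Your Step~2 does furnish $w^\epsilon(t,x)\le \mu x -c\mu t +O(\epsilon)$ uniformly, which is enough for $w^*(0,x)\le\mu x$. What is missing is the companion bound $w^\epsilon(t,x)\ge(\mu-\delta)x - Q_2 t -O(\epsilon)$, needed for $w_*(0,x)\ge\mu x$. This requires a \emph{super}-solution for $u$ of the form $C_2\,e^{-(\mu-\delta)X+Q_2T}$, built from the sublinearity $f_i(t,x,u)\le u\,\partial_uf_i(t,x,0)$ in (H1) and the upper bound in (IC$_\mu$). Your Step~1 super-solution $u\le M$ is too crude for this purpose. The paper carries out exactly this two-sided barrier construction in Lemma~\ref{lem:a4}, obtaining $\max\{(\mu-\delta)x-Q_2t,0\}\le w^\epsilon\le(\mu+\delta)x+Q_1(t+\epsilon)$ on $[0,\infty)^2$ and then letting $t\to0$, $\delta\to0$.

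A secondary issue: in Step~2 the comparison $\underline u(0,X)=a e^{-\mu X}\le\phi(0,X)$ cannot hold globally, since (IC$_\mu$) only bounds $\phi$ from below for $X\gg1$ and $\phi$ may vanish elsewhere, while $ae^{-\mu X}$ grows without bound as $X\to-\infty$; ``truncating by $\delta_*$'' does not repair this. The correct manoeuvre---and what the paper does---is to work on the half-line $X\ge0$ (or $x\ge0$ after rescaling) and supply the missing boundary condition at $X=0$ from (H5). For the pure finiteness statement $w^*<\infty$ on $(0,\infty)\times\mathbb R$, the paper actually takes a different, cleaner route: drop the nonnegative $F_\epsilon$ term to obtain the local inequality $\partial_t w^\epsilon-\epsilon\,\partial_{xx}w^\epsilon+|\partial_xw^\epsilon|^2\le C$ and invoke a standard eikonal barrier (\cite[Lemma~3.2]{LLLDCDSA}), which sidesteps the delicate sub-solution patching for the nonlocal equation.
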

\begin{proof}
\noindent First, we claim that there exists $L >0$ such that $w^\ep(t,x) \geq  - \epsilon \log L$ in $[0,\infty)\times \mathbb{R}$.

Let $L_0$ be given in (H2). It suffices to choose $L\in [L_0,\infty)$ such that  
$0\leq u(t,x,\phi)\leq L$ in $[-\tau_0,0]\times \mathbb{R}$. Then (H2) and the maximum principle yield $u(t,x,\phi) \leq L$ for $(t,x) \in [0,\infty) \times \mathbb{R}.$ This proves that $w_*(t,x) \geq 0$ for all $(t,x)$. 

The first part of \eqref{eq:prop:2.1.1} follows from hypothesis (H5). The second part of \eqref{eq:prop:2.1.1} follows from Lemma \ref{lem:a4} in case $\mu \in (0,\infty)$, or Lemma \ref{lem:a5} in case $\mu = \infty$.

It remains to show the upper bound of \eqref{eq:prop:2.1.0}. By noting that
$$
\partial_t w^\ep - \ep \partial_{xx} w^\ep + |\partial_x w^\ep|^2 \leq C \quad \text{ for }(t,x) \in (0,\infty)\times(0,\infty),
$$
this follows from the proof of \cite[Lemma 3.2]{LLLDCDSA}.
\end{proof}

We recall the classical definition of discontinuous viscosity super- and sub-solutions to \eqref{rhoeq1'} following \cite{Barles2013}. See Definition \ref{def:AA'} for the corresponding definition for \eqref{whjEq1'}.
\begin{definition}
	We say that a lower semicontinuous function $\hat{\rho}$ is a viscosity super-solution of \eqref{rhoeq1'} if $\hat{\rho}\ge0$, and for all test functions $\phi\in C^1$, if $s_0$ is a strict local minimum of $\hat \rho-\phi$, then
	$$\hat\rho(s_0)-s_0\phi'(s_0)+|\phi'(s_0)|^2+R^*_1(s_0)+R^*_2(s_0)\int_{0}^{\tau_0}\int_{\R }\Gamma(\tau, y)e^{(\hat\rho(s_0)-s_0\phi'(s_0))\tau+\phi'(s_0) y} dy d\tau\ge0.$$
		We say that an upper semicontinuous function $\hat{\rho}$ is a viscosity sub-solution of \eqref{rhoeq1'} if for all test functions $\phi\in C^1$, if $s_0$ is a strict local minimum of $\hat \rho-\phi$ and $\hat{\rho}(s_0)>0$, then
	$$\hat\rho(s_0)-s_0\phi'(s_0)+|\phi'(s_0)|^2+R_{1,*}(s_0)+R_{2,*}(s_0)\int_{0}^{\tau_0}\int_{\R }\Gamma(\tau, y)e^{(\hat\rho(s_0)-s_0\phi'(s_0))\tau+\phi'(s_0) y} dy d\tau\le0.$$
	Finally, $\hat{\rho}$ is a viscosity solution of \eqref{rhoeq1'} if and only if $\hat \rho$ is a viscosity super- and sub-solution.
\end{definition}
The functions $R^*_i$ and $R_{i,*}$ appeared above denote respectively the upper semicontinuous (u.s.c) and lower semcontinuous (l.s.c) envelope of $R_i$, that is,
$$R^*_i(s)=\limsup_{s'\to s}R_i(s')\quad \text{ and }\quad R_{i,*}(s)=\liminf_{s'\to s}R_i(s').$$
\begin{remark}
If $R_i$ ($i=1,2$) satisfies {\rm(H4)}, then they are upper semi-continuous (u.s.c.) so we have $R_i(s) \equiv R^*_i(s)$ everywhere in $[0,\infty)$. 
\end{remark}

\begin{definition}\label{def:AA'}
	We say that a lower semicontinuous function $\hat{w}$ is a viscosity super-solution of \eqref{whjEq1'} if $\hat{w}\ge0$, and for all test functions $\phi\in C^1$, if $(t_0,x_0)$ is a strict local minimum of $\hat w-\phi$, then
	$$ H(x_0/t_0, \partial_t \phi(t_0,x_0), \partial_x \phi(t_0,x_0) \ge0.$$
		We say that an upper semicontinuous function $\hat{w}$ is a viscosity sub-solution of \eqref{whjEq1'} if for all test functions $\phi\in C^1$, if $(t_0,x_0)$ is a strict local minimum of $\hat w-\phi$ and $\hat{w}(t_0,x_0)>0$, then
	$$ H_*(x_0/t_0, \partial_t \phi(t_0,x_0), \partial_x \phi(t_0,x_0) \le0.$$
	Finally, $\hat{w}$ is a viscosity solution of \eqref{whjEq1'} if and only if $\hat{w}$ is a viscosity super- and sub-solution.
\end{definition}
In the above definition, $H_*(s,q,p) = q + p^2 + (R_1)_*(s) + (R_2)_*(s) \int_0^{\tau_0} \int_{\mathbb{R}} \Gamma(\tau,y) e^{q \tau + p y}\,dy d\tau$. We have also used the fact that $H$ is u.s.c..

Below, we relate the notion of viscosity super- and sub-solutions of \eqref{whjEq1'} and \eqref{rhoeq1'}.

\begin{lemma}\label{lem:samehj}
Suppose $w(t,x)$ and $\rho(s)$ are two functions such that
$$
w(t,x) = t\rho(x/t)\quad \text{ in }\Omega:=\{(t,x): 0 < x < c_b t\} \quad \text{ for some }c_b \in (0,\infty]. 
$$
Then 
$\rho(s)$ is a viscosity sub-solution (resp. super-solution) of \eqref{rhoeq1'} in the interval $(0,c_b)$ if and only if $w(t,x)$ is  a viscosity sub-solution (resp. super-solution) of \eqref{whjEq1'}
in $\Omega$.
\end{lemma}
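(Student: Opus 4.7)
The plan is to exploit the formal correspondence built into the two equations. A direct chain-rule computation shows that if $w(t,x) = t\rho(x/t)$ is $C^1$ then, writing $s = x/t$, one has $\partial_t w = \rho(s) - s\rho'(s)$ and $\partial_x w = \rho'(s)$, so $H(x/t,\partial_t w,\partial_x w) = H(s,\rho - s\rho',\rho')$; likewise $w\ge 0$ in $\Omega$ is equivalent to $\rho\ge 0$ in $(0,c_b)$. To lift this classical correspondence to the viscosity setting, I would translate test functions back and forth across the transformation $s = x/t$, treating the sub- and super-solution cases in parallel.

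For the direction ``$\rho$ is a sub-/super-solution of \eqref{rhoeq1'} $\Rightarrow$ $w$ is a sub-/super-solution of \eqref{whjEq1'}'', given a $C^1$ test function $\psi$ for $\rho$ at some $s_0\in(0,c_b)$, pick any $t_0>0$ and set $x_0=s_0 t_0$. Define the two-dimensional lift
\[
\phi(t,x)\ :=\ t\,\psi(x/t)\ \pm\ (t-t_0)^2,
\]
with the sign of the perturbation chosen to produce the required strict local extremum of $w-\phi$ at $(t_0,x_0)$. The naive lift $t\psi(x/t)$ alone shares the $1$-homogeneity of $w$, so $w-t\psi(x/t)$ is constant along each ray from the origin and the extremum is never strict. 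The $(t-t_0)^2$ term breaks this degeneracy, and since it vanishes to first order at $t=t_0$, the partial derivatives of $\phi$ at $(t_0,x_0)$ reduce to $\partial_x\phi(t_0,x_0)=\psi'(s_0)$ and $\partial_t\phi(t_0,x_0)=\psi(s_0)-s_0\psi'(s_0)=\rho(s_0)-s_0\psi'(s_0)$. The viscosity inequality for $w$ at $(t_0,x_0)$ with test function $\phi$ then reads identically as the viscosity inequality for $\rho$ at $s_0$ with test function $\psi$.

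For the reverse direction ``$w$ is a sub-/super-solution $\Rightarrow$ $\rho$ is a sub-/super-solution'', given a $C^1$ test function $\phi$ for $w$ at $(t_0,x_0)\in\Omega$, set $s_0=x_0/t_0$ and define the one-dimensional slice
\[
\psi(s)\ :=\ \phi(t_0,st_0)/t_0.
\]
Since $w(t_0,st_0)=t_0\rho(s)$, the function $\rho-\psi$ inherits from $w-\phi$ the required strict local extremum (of the same type) at $s_0$, and $\psi$ is $C^1$ with $\psi'(s_0)=\partial_x\phi(t_0,x_0)$. The one extra step needed to turn the viscosity inequality for $w$ into the viscosity inequality for $\rho$ is the tangency identity
\[
\partial_t\phi(t_0,x_0)\ =\ \rho(s_0)-s_0\,\partial_x\phi(t_0,x_0),
\]
which follows by differentiating at $h=0$ the function $h\mapsto \phi(t_0+h,x_0+s_0 h)-(t_0+h)\rho(s_0)$: by the $1$-homogeneity of $w$ this map equals $(\phi-w)$ restricted to the ray through the origin with direction $(1,s_0)$, and $(\phi-w)$ has a strict local extremum at the base point $(t_0,x_0)$.

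The main technical point is exactly this $1$-homogeneity degeneracy: without the $(t-t_0)^2$ perturbation on one side and the ray-derivative identity on the other, the test-function classes on the two sides of the transformation are not interchangeable. Once those two devices are in hand, all four sub-cases collapse to the chain rule, and the positivity clause ($\rho(s_0)>0$ versus $w(t_0,x_0)>0$) transfers automatically because $t_0>0$.
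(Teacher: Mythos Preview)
Your two constructions are exactly the ones the paper uses, and the computations are correct, but you have assigned them to the wrong directions of the equivalence. To prove ``$\rho$ is a sub-solution $\Rightarrow$ $w$ is a sub-solution'' you must start from an \emph{arbitrary} test function $\phi$ for $w$ at $(t_0,x_0)$ and manufacture from it a test function for $\rho$; that is precisely your slice $\psi(s)=\phi(t_0,st_0)/t_0$ together with the ray-derivative identity $\partial_t\phi(t_0,x_0)=\rho(s_0)-s_0\partial_x\phi(t_0,x_0)$. What you wrote instead --- lifting a given $\psi$ to $\phi(t,x)=t\psi(x/t)\pm(t-t_0)^2$ --- only tests $w$ against a special one-parameter family of $\phi$'s, which does not show $w$ is a sub-solution. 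It \emph{does} prove the converse direction $w\Rightarrow\rho$, because there $\psi$ is the arbitrary object and you may choose any admissible $\phi$ to invoke the hypothesis on $w$. The paper runs it exactly this way: the slice-plus-ray argument for $\rho\Rightarrow w$, and the lift with the $(t-1)^2$ perturbation for $w\Rightarrow\rho$.

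One further small gap: in your lift you write $\partial_t\phi(t_0,x_0)=\psi(s_0)-s_0\psi'(s_0)=\rho(s_0)-s_0\psi'(s_0)$, silently using $\psi(s_0)=\rho(s_0)$. That normalization is harmless (add a constant to the test function) but should be stated; the paper makes it explicit.
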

\begin{proof}
The proof follows from a minor modification of that in \cite[Lemma 2.3]{LLL_2020c}. Below we only show the equivalence of viscosity sub-solutions. 

Let $\rho(s)$ be a viscosity sub-solution of \eqref{rhoeq1'} in $(0,c_b)$. We must  verify that $w(t,x)=t\rho\left(\frac{x}{t}\right)$ is a viscosity sub-solution of \eqref{whjEq1'}.  For any test function $\varphi\in C^1$, suppose that $w-\varphi$ attains a strict local maximum at point $(t_*,x_*)\in \Omega$ such that $w(t_*,x_*)>0$. Since $w(t,x)=t\rho\left(\frac{x}{t}\right)$ and $t_*>0$, we see that $\rho\left(\frac{x_*}{t_*}\right)>0$ and $f(y):=y \rho\left(\frac{x_*}{t_*}\right)-\frac{\varphi(yt_*,yx_*)}{t_*} $ admits a strict local maximum at $y=1$, so that letting $s_*=x_*/t_*$, we have
\begin{equation}\label{11}
\rho(s_*)-\partial_{t}\varphi(t_*,x_*)-s_*\partial_{x}\varphi(t_*,x_*)=0.
\end{equation}
{Next,} set $\phi(s):=\varphi(t_*,st_*)/t_*$. We observe that $\rho(s)-\phi(s)$ takes a strict local maximum point $s=s_*$ and $\rho(s_*)>0$. Note that $\phi'(s_*)=\partial_x\varphi(t_*,x_*)$, it follows from \eqref{11} that
$\partial_t\varphi(t_*,x_*)=\rho(s_*)-s_*\phi'(s_*).$
Hence at the point $(t_*,x_*)$, we have 
\begin{align*}
&\partial_t \varphi+|\partial_x \varphi|^2+R_{1,*}(x_*/t_*)+R_{2,*}(x_*/t_*)\int^{\tau_0}_0\int_{\R}\Gamma(\tau,y) e^{\tau\partial_t \varphi+y\partial_x \varphi } dy d\tau\\
&=\rho(s_*)\!-\!s_*\phi'(s_*)\!+\!|\phi'(s_*)|^2+R_{1,*}(s_*)+R_{2,*}(s_*)\int^{\tau_0}_0\int_{\R}\Gamma(\tau,y) e^{\tau (\rho(s_*)-s_*\phi'(s_*))+y\phi'(s_*)} dy d\tau\le0,
\end{align*}
where the last inequality holds since $\rho$ is a viscosity sub-solution of \eqref{rhoeq1'} with $\phi(s)$ being the test function. Therefore, $w$ is a viscosity sub-solution of \eqref{whjEq1'}.

Conversely, let $w(t,x)=t\rho\left(\frac{x}{t}\right)$ be a viscosity sub-solution of \eqref{whjEq1'} in $\Omega$. Choose any test function $\phi\in C^1$ such that $\rho(s)-\phi(s)$ attains a strict local maximum at $s_*$ and $\rho(s_*)>0$. Without loss of generality, we might assume $\rho(s_*)=\phi(s_*)$. Set $\varphi(t,x)=t\phi\left(\frac{x}{t}\right)+(t-1)^2$. It then follows that $w(t,x)-\varphi(t,x)=t(\rho(x/t)-\phi(x/t))-(t-1)^2$ attain a strict local maximum at $(1,s_*)$. Hence, by the definition of $w(t,x)$ being a sub-solution and the fact that $\partial_t\varphi(1,s_*)=\phi(s_*)-s_*\phi'(s_*)$ and $\partial_x\varphi(1,s_*)=\phi'(s_*)$, we infer that 
\begin{align*}
&\rho(s_*)-s_*\phi'(s_*)+|\phi'(s_*)|^2+R_{1,*}(s_*)+R_{2,*}(s_*)\int^{\tau_0}_0\int_{\R}\Gamma(\tau,y) e^{\tau (\rho(s_*)-s_*\phi'(s_*))+y\phi'(s_*)} dy d\tau\\
=&\partial_t \varphi(1,s_*)+|\partial_x \varphi(1,s_*)|^2+R_{1,*}(s_*)+R_{2,*}(s_*)\int^{\tau_0}_0\int_{\R}\Gamma(\tau,y) e^{\tau\partial_t \varphi+y\partial_x \varphi } dy d\tau
\le0,
\end{align*}
which implies $\rho$ is a sub-solution of \eqref{rhoeq1'}.
\end{proof}

In the following, we observe that the limit functions $w^*,w_*$ satisfies an equation without nonlocal term, even though the original problem \eqref{Model} has a nonlocal space/time delay. 
\begin{proposition}\label{prop:2.3}
Assume $u(t,x)$ is a solution of the nonlocal model \eqref{Model} with initial data satisfying (IC$_\mu$) for some $\mu \in (0,\infty]$. 
The functions  $w^*,w_*$, as given in Proposition \ref{prop:2.1}, are respectively viscosity supersolution and subsolution of \eqref{whjEq1'} in $(0,\infty)\times(0,\infty)$.
\end{proposition}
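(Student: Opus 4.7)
The plan is to apply the Barles--Perthame half-relaxed limit framework, adapted to accommodate the nonlocal space-time delay in \eqref{Model}. The central observation is that under the Hopf--Cole transform $w^\epsilon = -\epsilon \log u^\epsilon$, the delay integral in the equation for $w^\epsilon$ formally collapses into the exponential term appearing in $H$, because
\[
\frac{u^\epsilon(t-\epsilon\tau,\, x-\epsilon y)}{u^\epsilon(t,x)} \;=\; \exp\!\left(\frac{w^\epsilon(t,x) - w^\epsilon(t-\epsilon\tau,\, x-\epsilon y)}{\epsilon}\right)
\]
is asymptotic to $\exp(\tau\, \partial_t w + y\, \partial_x w)$ as $\epsilon \to 0$ whenever $w^\epsilon$ is well-approximated by a smooth test function. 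The uniform bounds from Proposition \ref{prop:2.1} ensure that $w^*$ and $w_*$ are finite on $(0,\infty)\times(0,\infty)$, so the viscosity inequalities only need to be verified at interior points.

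To establish that $w^*$ is a supersolution, I fix $(t_0,x_0) \in (0,\infty)\times(0,\infty)$ and a test function $\phi \in C^1$ such that $w^* - \phi$ attains a strict local minimum at $(t_0,x_0)$; we may assume $w^*(t_0,x_0)>0$, since otherwise $\min\{w^*, H\} \geq 0$ already follows from $w^* \geq 0$. A standard extraction argument yields sequences $\epsilon_n \to 0$ and $(t_n,x_n) \to (t_0,x_0)$ at which $w^{\epsilon_n} - \phi$ admits a local minimum with $w^{\epsilon_n}(t_n,x_n) \to w^*(t_0,x_0)$. At such a minimum, $\partial_t w^{\epsilon_n} = \partial_t \phi$, $\partial_x w^{\epsilon_n} = \partial_x \phi$, and $\partial_{xx} w^{\epsilon_n} \geq \partial_{xx} \phi$, so inserting these into \eqref{LinearModel3} gives
\[
\partial_t \phi + |\partial_x \phi|^2 + \frac{f_{1,\epsilon_n}(t_n,x_n, u^{\epsilon_n})}{u^{\epsilon_n}(t_n,x_n)} + \frac{F_{\epsilon_n}(t_n,x_n, u^{\epsilon_n})}{u^{\epsilon_n}(t_n,x_n)} \;\geq\; \epsilon_n\, \partial_{xx}\phi(t_n,x_n),
\]
whose right-hand side tends to zero. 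The KPP upper bound $f_i(t,x,u)/u \leq \partial_u f_i(t,x,0)$ from (H1), combined with the local-minimum property, which via Taylor expansion of $\phi$ forces the delay ratio $u^{\epsilon_n}(t_n - \epsilon_n \tau, x_n - \epsilon_n y)/u^{\epsilon_n}(t_n,x_n)$ to be bounded above by $\exp\!\bigparen{\tau\, \partial_t \phi + y\, \partial_x \phi + o(1)}$, bounds the two reaction terms from above by $R_1(x_0/t_0)$ and $R_2(x_0/t_0)\int \Gamma(\tau,y)\, e^{\tau\partial_t\phi + y\partial_x\phi}\,dyd\tau$ respectively, in the limit $\epsilon_n \to 0$. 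Combined with the displayed inequality, this yields $H(x_0/t_0,\, \partial_t\phi,\, \partial_x\phi) \geq 0$.

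The subsolution inequality for $w_*$ is proved by a parallel argument at a strict local maximum of $w_* - \phi$ with $w_*(t_0,x_0)>0$. Here the standard extraction lemma produces local maxima of $w^{\epsilon_n} - \phi$; the inequality on $\partial_{xx} w^{\epsilon_n}$ is reversed; the delay ratio is bounded from below rather than above by the local-maximum property; and the small-$u$ KPP lower bound \eqref{eq:H2} from (H1), applicable because $u^{\epsilon_n}(t_n,x_n) = \exp(-w^{\epsilon_n}(t_n,x_n)/\epsilon_n) \to 0$, delivers a matching lower bound on the reaction terms in the limit. The resulting inequality is $H_*(x_0/t_0,\, \partial_t\phi,\, \partial_x\phi) \leq 0$.

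The principal technical obstacle is the passage to the limit inside the delay integral, which is taken over the unbounded range $[0,\tau_0]\times\mathbb{R}$ and whose integrand contains factors growing exponentially in $\tau$ and $|y|$ after Taylor expansion of $\phi$. Dominated convergence requires a uniform integrable majorant; this is supplied by combining the global bound $u^{\epsilon_n} \leq L$ from (H2) with the $C^1$ control on $\phi$ to obtain
\[
\frac{u^{\epsilon_n}(t_n - \epsilon_n\tau,\, x_n - \epsilon_n y)}{u^{\epsilon_n}(t_n, x_n)} \;\leq\; C\, e^{c_1 \tau + c_2 |y|}, \qquad (\tau,y) \in [0,\tau_0]\times\mathbb{R},
\]
with constants independent of $n$; this is integrable against $\Gamma$ precisely thanks to the finite exponential moments assumed in (H3). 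For $t_0 > 0$ one can take $n$ large enough that $t_n - \epsilon_n\tau > 0$ for all $\tau \in [0,\tau_0]$, thereby avoiding any contribution from the initial-data layer $[-\epsilon \tau_0, 0]\times\mathbb{R}$.
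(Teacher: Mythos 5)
Your proof attaches the sub-/supersolution properties to the wrong half-relaxed limits, and this breaks the key extraction step. The Barles--Perthame lemma (\cite[Lemma 6.1]{Barles2013}) produces approximate \emph{maximum} points of $w^{\epsilon_n}-\phi$ converging to a strict local maximum of the \emph{upper} limit $w^*$, and approximate \emph{minimum} points converging to a strict local minimum of the \emph{lower} limit $w_*$; there is no extraction of minima of $w^{\epsilon_n}-\phi$ from a minimum of $w^*-\phi$, since $w^{\epsilon_n}$ may dip far below $w^*$ near $(t_0,x_0)$ along subsequences. The assignment the paper's proof actually establishes (despite the wording of the statement) is that $w_*$ is the supersolution and $w^*$ is the subsolution, which is also what the later comparison argument ($\rho^*\le\rho_*$) requires. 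With the labels exchanged, your supersolution half does coincide with the paper's argument: touch $w_*$ from below, use $f_i(t,x,u)\le u\,\partial_u f_i(t,x,0)$, bound the delay ratio above by $e^{(\phi(t_n,x_n)-\phi(t_n-\epsilon\tau,x_n-\epsilon y))/\epsilon}$, and dominate via (H3). (Two small points there: for the supersolution property of $\min\{w,H\}=0$ one must verify $H\ge0$ at \emph{every} touching point, so your reduction to $w(t_0,x_0)>0$ is not justified, though it is harmless since positivity is never used; and the integrable majorant comes from the touching inequality together with the $C^1$ bound on $\phi$, not from $u^{\epsilon_n}\le L$, which alone gives no control on the ratio.)

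The subsolution half has a further genuine gap beyond the labeling. Passing to the limit at a touching point only controls the coefficients through $\liminf_n \partial_u f_i(t_n/\epsilon_n, x_n/\epsilon_n,0)\ge \underline{R}_i(x_0/t_0)$, so the direct argument yields $\underline{H}(x_0/t_0,\partial_t\phi,\partial_x\phi)\le0$ with $\underline{R}_i$ in place of $(R_i)_*$; since $\underline{R}_i\le (R_i)_*$, this is strictly weaker than the claimed $H_*\le0$. The paper bridges this by showing that $\rho^*(s)=w^*(1,s)$ is locally Lipschitz (coercivity of the convex Hamiltonian), invoking Rademacher's theorem and the a.e.\ identity $R_i=\underline{R}_i$ from (H4), and then using convexity to upgrade the resulting a.e.\ inequality back to a viscosity inequality for $H$; your proposal omits this step entirely. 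Finally, the lower bound on the delay ratio via \eqref{eq:H2} requires that the \emph{delayed} value $u^{\epsilon_n}(t_n-\epsilon_n\tau,x_n-\epsilon_n y)$ be small, which need not hold; one must split into the case where it tends to $0$ (where \eqref{eq:H2} applies) and the case where it stays bounded below (where the ratio blows up because $u^{\epsilon_n}(t_n,x_n)\to0$), and then pass to the limit with Fatou's lemma rather than dominated convergence, since only a lower bound on the nonnegative integrand is needed.
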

\begin{proof}
The proof essentially follows from a slight variation of \cite[Propositions 3.1 and 3.2]{BCS90}, we include it here only for the sake of completeness. First, we verify that $w_*$ is a viscosity super-solutions of \eqref{whjEq1'}. 
By \eqref{eq:prop:2.1.0}, $w_*(t,x)\ge 0$ on $(0,\infty)\times(0,\infty)$.

Fix a smooth test function $\phi$, without loss of generality, assume
$w_*-\phi$ has a strict global minimum at some point $(t_0,x_0)\in(0,\infty)\times(0,\infty)$. (We only need to check the strict global minima here, due to \cite[Prop. 3.1]{Barles2013}.)
It then suffices to show that $H(x/t,\partial_t \phi,\partial_x \phi)\ge0$ at $(t_0,x_0)$. (Here we used the fact that $H(s,q,p)$ is u.s.c. (Remark \ref{rmk:Husc}), so it coincides with its upper envelope $H^*(s,q,p)$.)
	
Clearly,
 there exists a sequence $\epsilon_n\to 0$ and a sequence of points $(t_n,x_n) \in (\ep_n \tau,\infty) \times (0,\infty)$ such that $w^{\ep_n}(t,x)-\phi$ has a global minimum at $(t_n,x_n)$ and that (see, e.g. \cite[Lemma 6.1]{Barles2013})
\begin{equation}
w^{\ep_n}(t_n,x_n) \to w_*(t_0,x_0) \quad \text{ as } n\to\infty \quad \text{ and }\quad  (t_n,x_n)\to (t_0,x_0)\quad \text{ as }n\to\infty.
\end{equation}
By definition of $(t_n,x_n)$ being the global minimum, 
\begin{equation}\label{eq:prop2.4a}
    w^{\ep_n}(t_n,x_n)-\phi(t_n,x_n)\le w^{\ep_n}(t,x)-\phi(t,x)\quad  \text{ for  }(t,x)\in(0,\infty)\times\R,
\end{equation}
it then follows from the maximum principle and {\rm(H1)} that at the point $(t_n,x_n)$
{\small\begin{align*}
0&\le \partial_t\phi(t_n,x_n)-\epsilon_n\partial_{xx}\phi(t_n,x_n)+|\partial_x \phi(t_n,x_n)|^2 + \frac{f_{1, \ep_n}(t_n,x_n,u^{\ep_n}(t_n,x_n))}{u^{\ep_n}(t_n,x_n)}\\ 
&\qquad+\int_{0}^{\tau_0}\int_{\mathbb{R}}\Gamma(\tau,y) \frac{f_{2, \ep_n}(t_n-\ep_n \tau,x_n-\ep_n y, u^{\ep_n}(t_n-\ep_n \tau,x_n-\ep_n y))}{u^{\ep_n}(t_n,x_n)}\,dyd\tau\\
&\le \partial_t\phi-\epsilon_n\partial_{xx}\phi+|\partial_x \phi|^2 + \partial_u f_{1, \ep_n}(t_n,x_n,0)\\ 
&\qquad +\int_{0}^{\tau_0}\int_{\mathbb{R}}\Gamma(\tau,y) \partial_uf_{2, \ep_n}(t_n-\ep_n \tau,x_n-\ep_n y,0) \frac{u^{\ep_n}(t_n-\ep_n \tau,x_n-\ep_n y)}{{u^{\ep_n}(t_n,x_n)}}\,dyd\tau\\
&\le \partial_t\phi-\epsilon_n\partial_{xx}\phi+|\partial_x \phi|^2 + \partial_u f_{1, \ep_n}(t_n,x_n,0)\\ 
&\qquad+\int_{0}^{\tau_0}\int_{\mathbb{R}}\Gamma(\tau,y) \partial_uf_{2, \ep_n}(t_n-\ep_n \tau,x_n-\ep_n y,0)e^{\frac{w^{\ep_n}(t_n,x_n)-w^{\ep_n}(t-\ep_n \tau,x-\ep_n y)}{\ep_n}} dy d\tau \\
&\le \partial_t\phi-\epsilon_n\partial_{xx}\phi+|\partial_x \phi|^2 + \partial_u f_{1, \ep_n}(t_n,x_n,0)\\ 
&\qquad+\int_{0}^{\tau_0}\int_{\mathbb{R}}\Gamma(\tau,y) \partial_uf_{2, \ep_n}(t_n-\ep_n \tau,x_n-\ep_n y,0)e^{\frac{\phi(t_n,x_n)-\phi(t_n-\ep_n \tau,x_n-\ep_n y)}{\ep_n}} dy d\tau\\
&\le\partial_t\phi-\epsilon_n\partial_{xx}\phi+|\partial_x \phi|^2 + \partial_u f_{1, \ep_n}(t_n,x_n,0)\\ 
&\qquad+\int_{0}^{\tau_0}\int_{\mathbb{R}}\Gamma(\tau,y) \partial_uf_{2, \ep_n}(t_n-\ep_n \tau,x_n-\ep_n y,0)e^{\partial_t\phi(\xi_n,\eta_n)s+\partial_x\phi(\xi_n,\eta_n)y} dy d\tau,
\end{align*}}
where $\xi_n\in(t_n-\ep_n \tau,t_n)$ and $\eta_n\in(x_n-\ep_n y, x_n)$. We used \eqref{eq:prop2.4a} in the fourth inequality. 
Letting $n\to \infty$, by Lebsegue dominated convergence theorem,  we get
$$0\le \partial_t\phi(t_0,x_0)+|\partial_x \phi(t_0,x_0)|^2+R_{1}(x_0/t_0)+R_{2}(x_0/t_0)\int_{0}^{\tau_0}\int_{\mathbb{R}}\Gamma(\tau,y) e^{\partial_t\phi(t_0,x_0)\tau+\partial_x\phi(t_0,x_0)y} dy d\tau,$$
where we use the first two parts of \eqref{eq:h4}. This shows that $w_*$ is a viscosity super-solution of \eqref{whjEq1'}.

Next, we verify that $w^*$ is a viscosity sub-solutions of \eqref{whjEq1'}. We argue that it is enough to verify that $w^*$ is a viscosity subsolution of 
    \begin{equation}\label{whjEq1''}
\min\left\{w,\underline{H}(x/t,\partial_t w,\partial_x w)\right\}=0 \quad \text{ in }(0,\infty)\times (0,\infty),
\end{equation}
which is obtained from \eqref{whjEq1'} by replacing the Hamiltonian $H(s,q,p)$ therein by
$$\underline{H}(s,q,p) = q+p^2+\underline{R}_1(s)+\underline{R}_2(s)\int_0^{\tau_0}\int_\mathbb{R} \Gamma(\tau,y) e^{q\tau + py}\,dyd\tau,$$
where $\underline{R}_i(s)$ are given in \eqref{eq:h4}. 
Indeed, suppose this is the case, then by \eqref{eq:rhooo} we have $w^*(t,x) = t \rho^*(x/t)$ for some u.s.c. function $\rho^*(s)$. By arguing similarly as in Lemma \ref{lem:samehj} (with $\underline{R}_i$ in place of $R_i$) it follows that $\rho^*(s)$ satisfies, in viscosity sense,
\begin{equation}\label{rhohjEq1''}
\min\left\{\rho,\underline H(s,\rho - s\rho',\rho')\right\}\leq 0 \quad \text{ in }(0,\infty).
\end{equation}
Since the Hamiltonian in \eqref{rhohjEq1''} is convex in $\rho'$, a direct application of \cite[Proposition 1.14]{Ishii2013} (see also \cite[Chap. II, Prop. 4.1]{Bardi1997}) yields that $\rho^* \in \textup{Lip}_{loc}([0,\infty))$. It then follows from Rademacher's theorem that $\rho^*$ is differentiable a.e. in $(0,\infty)$, so that it satisfies \eqref{rhohjEq1''} a.e. in $(0,\infty)$. Since $R_i(s) = \underline{R}_i(s)$ a.e. (by (H4)), the following differential inequality holds a.e. in $(0,\infty)$
\begin{equation}\label{rhohjEq1'''}
\min\left\{\rho^*,H(s,\rho^* - s(\rho^*)',(\rho^*)')\right\}\leq 0. 
\end{equation}
However, by the convexity of the Hamiltonian, we can again apply \cite[Chap. I, Prop. 5.1]{Bardi1997} to conclude that it in fact satisfies \eqref{rhohjEq1'''} in $(0,\infty)$ in viscosity sense, i.e., $\rho^*$ is a viscosity sub-solution of  \eqref{rhoeq1'}. By Lemma \ref{lem:samehj} with $c_b=\infty$, we see that $w^*(t,x)=t\rho^*(x/t)$ is a viscosity sub-solution of \eqref{whjEq1'}.

Therefore, it remains to show that $w^*$ is a viscosity sub-solution of \eqref{whjEq1''}. Fix a smooth test function $\phi$ and assume 
$w^*-\phi$ has a strict global maximum at some point $(t_0,x_0)\in(0,\infty)\times(0,\infty)$ and $w^*(t_0,x_0)>0$.
We claim that $\underline{H}(x_0/t_0, \partial_t\phi(t_0,x_0),\partial_x\phi(t_0,x_0)) \leq 0$. By the definition of $w^*$ in \eqref{eq:wstar}, there exist a sequence $\epsilon_n\to 0$ and a sequence of points 
$(t_n,x_n) \in (0,\infty)\times(0,\infty)$ 
such that $w^{\ep_n}(t,x)-\phi$ has a global maximum at $(t_n,x_n)$, and satisfy
\begin{equation}\label{qqq:2.12}
    w^{\ep_n}(t_n,x_n) \to w^*(t_0,x_0)>0\quad \text{ and }\quad (t_n,x_n)\to (t_0,x_0)\quad \text{  as }\quad n\to\infty.
\end{equation}
Next, we claim that 
\begin{equation}\label{qq:2.13}
\liminf_{n \to \infty} \frac{f_{2, \ep_n}(t_n - \ep_n \tau, x_n - \ep_n y, u^{\ep_n}( t_n - \ep_n \tau, x_n - \ep_n y)}{u^{\ep_n}( t_n, x_n)}  \geq \underline{R}_2(x_0/t_0) e^{\partial_t \phi(t_0,x_0) \tau + \partial_x \phi(t_0,x_0) y}
\end{equation}
for a.e. $(\tau,y) \in [0,\tau_0]\times \mathbb{R}$. 

For any given $(\tau,y)\in[0,\tau_0]\times \R$, by passing to a subsequence, we may divide into two cases:
 $$
 {\rm(i)} \quad u^{\ep_n}( t_n - \ep_n \tau, x_n - \ep_n y) \to 0, \text{ as }n\to\infty,\quad \text{ or }\quad  {\rm(ii)} \quad \inf_{n} u^{\ep_n}( t_n - \ep_n \tau, x_n - \ep_n y) > 0.
 $$
In case (i), we use (H1) to obtain
 \begin{align*}
& \frac{f_{2, \ep_n}(t_n - \ep_n \tau, x_n - \ep_n y, u^{\ep_n}( t_n - \ep_n \tau, x_n - \ep_n y)}{u^{\ep_n}( t_n, x_n)} \\
&\geq  (\partial_v f_{2, \ep_n}(t_n - \ep_n \tau, x_n - \ep_n y, 0)-\eta') \exp\left(\frac{w^{\ep_n}(t_n,x_n) - w^{\ep_n}(t_n - \ep_n \tau, x_n - \ep_n y}{\ep_n}\right) \\
     &\geq (\partial_v f_{2, \ep_n}(t_n - \ep_n \tau, x_n - \ep_n y, 0)-\eta') \exp\left(\frac{\phi(t_n,x_n) - \phi(t_n - \ep_n \tau, x_n - \ep_n y}{\ep_n}\right) ,
 \end{align*}
 where the first inequality follows from (H1), and then the second inequality from $(w^{\ep_n}-\phi)(t_n,x_n) = \max (w^{\ep_n} - \phi)$. Letting $n \to \infty$, we deduce 
$$
\liminf_{n \to \infty} \frac{f_{2, \ep_n}(t_n - \ep_n \tau, x_n - \ep_n y, u^{\ep_n}( t_n - \ep_n \tau, x_n - \ep_n y)}{u^{\ep_n}( t_n, x_n)}  \geq (\underline{R}_2(x_0/t_0) - \eta')e^{\partial_t \phi(t_0,x_0) \tau + \partial_x \phi(t_0,x_0) y}.
$$
Since $\eta'>0$ is arbitrarily small, we obtain \eqref{qq:2.13}. 

In case (ii), observe that $f_{2, \ep_n}(t_n - \ep_n \tau, x_n - \ep_n y, u^{\ep_n}( t_n - \ep_n \tau, x_n - \ep_n y)$ is bounded from below by a positive number, and that $u^{\ep_n}(t_n,x_n) = \exp(-w^{\ep_n}(t_n,x_n)/\ep_n) \to 0^+$ (using \eqref{qqq:2.12} and that $w^*(t_0,x_0) >0$), and \eqref{qq:2.13} automatically holds. 

Having proved \eqref{qq:2.13}, we conclude by Fatou's lemma that 
\begin{align}\label{qq:2.14}
\liminf_{n\to\infty}&\int_0^{\tau_0}\int_\mathbb{R}  \Gamma(\tau,y)\frac{f_{2, \ep_n}(t_n - \ep_n \tau, x_n - \ep_n y, u^{\ep_n}( t_n - \ep_n \tau, x_n - \ep_n y)}{u^{\ep_n}( t_n, x_n)} \,dyd\tau \\
&\geq \underline{R}_2(x_0/t_0) \int_0^{\tau_0}\int_\mathbb{R}  \Gamma(\tau,y)e^{\partial_t \phi(t_0,x_0) \tau + \partial_x \phi(t_0,x_0) y}\,dyd\tau .  \notag
\end{align}
Moreover, since $u^{\ep_n}(t_n,x_n)\to0$ as $n\to\infty$, we use (H2) again to get for any $\eta'>0$, there exists $n_0$ such that  
\begin{align*}
\frac{ f_{1, \ep_n}(t_n,x_n,u^{\ep_n})}{u^{\ep_n}(t_n,x_n)}\geq \partial_u f_{1, \ep_n}(t_n, x_n, 0)-\eta', \text{ for all }\, n\ge n_0.
\end{align*}
Letting $n\to\infty$ and then $\eta'\to0$, we use (H4) to get
\begin{equation}\label{qq:2.15}
    \liminf_{n\to\infty}\frac{ f_{1, \ep_n}(t_n,x_n,u^{\ep_n})}{u^{\ep_n}(t_n,x_n)}\geq \underline{R}_1(x_0/t_0).
\end{equation}
Now we are ready to verify $\underline{H}(x_0/t_0, \partial_t\phi(t_0,x_0),\partial_x\phi(t_0,x_0)) \leq 0.$ 
Indeed, at the point $(t_n,x_n)$,
\begin{align*}
 &\partial_t\phi-\epsilon_n\partial_{xx}\phi+|\partial_x \phi|^2  +  \frac{f_{1, \ep_n}(t_n,x_n,u^{\ep_n})}{u^{\ep_n}}\\
&\le-\int_0^{\tau_0}\int_\mathbb{R}  \Gamma(\tau,y)\frac{f_{2, \ep_n}(t_n - \ep_n \tau, x_n - \ep_n y, u^{\ep_n}( t_n - \ep_n \tau, x_n - \ep_n y)}{u^{\ep_n}( t_n, x_n)} \,dyd\tau.
\end{align*}
Letting $n\to\infty$, while using  \eqref{qq:2.14} and \eqref{qq:2.15}, we obtain
$$
\partial_t\phi+|\partial_x \phi|^2 + \underline{R}_1(x_0/t_0) \leq - \underline{R}_2(x_0/t_0) \int_0^{\tau_0}\int_\mathbb{R}  \Gamma(\tau,y)e^{\partial_t \phi(t_0,x_0) \tau + \partial_x \phi(t_0,x_0) y}\,dyd\tau.$$ This concludes the proof.
\end{proof}


\begin{corollary}\label{lem:2.2} Let $w^*$ and $w_*$ be given by \eqref{eq:wstar}. 
There exists a u.s.c. function $\rho^*(s)$ $($resp. l.s.c. function $\rho_*(s))$ such that 
\begin{equation}\label{eq:222}
w^*(t,x) = t \rho^*(x/t),\quad (\textit{resp.} \quad  w_*(t,x) = t \rho_*(x/t))\quad  \text{ in }\quad  (0,\infty)\times[0,\infty).
\end{equation}
Moreover, $\rho^*$ $($resp. $\rho_*)$ is a viscosity supersolution $($resp. subsolution$)$ of \eqref{rhoeq1'}.
\end{corollary}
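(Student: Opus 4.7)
The plan rests on three ingredients already at hand: the scaling identity indicated in \eqref{eq:rhooo2}, semi-continuity of half-relaxed limits, and the one-dimensional reduction via Lemma~\ref{lem:samehj}.

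First, I would rigorously verify the decomposition \eqref{eq:222}. Fix $(t,x)\in(0,\infty)\times[0,\infty)$. In the definition of $w^*(t,x)$, perform the substitution $\ep'=\ep/t$, $t''=t'/t$, $x''=x'/t$; since $t>0$ is held fixed, this is a bijection between admissible sequences, and on any such sequence
\[
-\ep\log u\bigl(\tfrac{t'}{\ep},\tfrac{x'}{\ep}\bigr) \;=\; -t\ep'\log u\bigl(\tfrac{t''}{\ep'},\tfrac{x''}{\ep'}\bigr) \;=\; t\, w^{\ep'}(t'',x''),
\]
with $\ep'\to 0$ and $(t'',x'')\to(1,x/t)$. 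Taking the limsup yields $w^*(t,x)=t\, w^*(1,x/t)$; the identical argument applied to the liminf yields $w_*(t,x)=t\, w_*(1,x/t)$. Setting $\rho^*(s):=w^*(1,s)$ and $\rho_*(s):=w_*(1,s)$ then gives \eqref{eq:222}. The upper (resp.\ lower) semi-continuity of $\rho^*$ (resp.\ $\rho_*$) is inherited from that of the half-relaxed limit $w^*$ (resp.\ $w_*$).

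Second, Proposition~\ref{prop:2.3} already establishes the relevant viscosity sub/super-solution property for $w^*$ and $w_*$ on $(0,\infty)\times(0,\infty)$. I would then invoke Lemma~\ref{lem:samehj} with $c_b=\infty$, so that its domain $\Omega=\{(t,x):0<x<c_b t\}$ becomes $(0,\infty)\times(0,\infty)$; the lemma directly converts the sub/super-solution property of $w^*$ and $w_*$ on $\Omega$ into the corresponding sub/super-solution property of $\rho^*$ and $\rho_*$ on $(0,\infty)$ for the reduced equation \eqref{rhoeq1'}.

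Since every nontrivial ingredient is already proved elsewhere, there is essentially no obstacle; the only point requiring care is the scaling step, where one must observe that the change of variables $(\ep,t',x')\mapsto(\ep/t,t'/t,x'/t)$ is indeed a bijection on admissible sequences precisely because $t>0$ is fixed and bounded away from zero, so that the limsup and liminf are genuinely preserved under the substitution.
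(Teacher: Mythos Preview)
Your proposal is correct and follows essentially the same route as the paper: the paper's proof simply cites the scaling identity \eqref{eq:rhooo2}--\eqref{eq:rhooo} for the decomposition, Proposition~\ref{prop:2.3} for the sub/super-solution property of $w^*,w_*$, and then Lemma~\ref{lem:samehj} to transfer this to $\rho^*,\rho_*$. Your write-up spells out the scaling substitution $(\ep,t',x')\mapsto(\ep/t,t'/t,x'/t)$ more carefully than the paper does, but the logical structure is identical.
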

\begin{proof}
By Proposition \ref{prop:2.3}, $w^*$ and $w_*$ are respectively sub- and super-solution of \eqref{whjEq1'}. 
The existence of $\rho^*(s)$ and $\rho_*(s)$ and \eqref{eq:222} are proved in \eqref{eq:rhooo2} and \eqref{eq:rhooo}. The rest follows from Lemma \ref{lem:samehj}.
\end{proof}

The following lemma indicates that each sub-solution $\underline{\rho}(s)$ is strictly increasing 
in the interval $\{s\in(0,\infty):\, \underline\rho(s) >0\}$.

\begin{lemma}\label{lem:2.3}
{Let $\underline{\rho}$ be a {nonnegative} viscosity subsolution to \eqref{rhoeq1'}, such that $\underline\rho(0) = 0$ and $\underline\rho(s) \to \infty$ as $s \to \infty$, then (i) $s \mapsto \underline\rho(s)$ is non-decreasing; (ii) $s \mapsto \frac{\underline\rho(s)}{s}$ has no positive local maximum points in $(0,\infty)$; (iii) $\lim\limits_{s \to \infty} \frac{\underline\rho(s)}{s}$ exists in $[0,\infty]$; (iv) if $\lim\limits_{s \to \infty} \frac{\underline\rho(s)}{s}<\infty$, then $\sup_{s>0} \frac{\underline\rho(s)}{s}<\infty$.}
\end{lemma}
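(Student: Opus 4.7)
The plan is to prove (i) and (ii) by applying the subsolution property of \eqref{rhoeq1'} to two specific families of test functions and deriving contradictions from the positivity conditions in (H3)--(H4), and then to deduce (iii)--(iv) from (ii) combined with the continuity of $\underline\rho$. Throughout, I will rely on the fact that $\underline\rho \in \mathrm{Lip}_{loc}((0,\infty))$: the Hamiltonian in \eqref{rhoeq1'} is convex and super-linear in $\rho'$ (since the $|\rho'|^2$ and $e^{\rho' y}$ terms dominate for large $|\rho'|$), so the argument cited in the proof of Proposition \ref{prop:2.3} via \cite[Prop.~1.14]{Ishii2013} yields local Lipschitz regularity, hence continuity. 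I will also invoke the following positivity facts: from (H3), $\underline{R}_2 \geq 0$ (since $f_2 \geq 0$ forces $\partial_u f_2(\cdot,\cdot,0) \geq 0$); from (H4), $\underline{R}_1(s) + \underline{R}_2(s) > 0$ for every $s \in (0,\infty)$; and the lower semicontinuous envelopes satisfy $R_{i,*} \geq \underline{R}_i$, so $R_{2,*} \geq 0$ and $R_{1,*} + R_{2,*} > 0$ everywhere on $(0,\infty)$.

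For (i), the first step is to show that $\underline\rho$ has no interior local maximum $s_0 \in (0,\infty)$ with $\underline\rho(s_0) > 0$. Testing with $\phi(s) = \underline\rho(s_0) + \delta(s-s_0)^2$ makes $s_0$ a strict local maximum of $\underline\rho - \phi$ and gives $\phi'(s_0) = 0$, so that the subsolution inequality collapses to
\[
\underline\rho(s_0) + R_{1,*}(s_0) + R_{2,*}(s_0) \int_0^{\tau_0}\!\!\int_{\mathbb{R}} \Gamma(\tau,y)\, e^{\underline\rho(s_0)\tau}\,dy\,d\tau \leq 0.
\]
Since $\underline\rho(s_0) > 0$ implies $e^{\underline\rho(s_0)\tau} \geq 1$ and $R_{2,*}(s_0) \geq 0$, the integral term is at least $R_{2,*}(s_0)$, so the left-hand side exceeds $\underline\rho(s_0) + R_{1,*}(s_0) + R_{2,*}(s_0) > 0$, a contradiction. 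To pass from ``no interior positive local max'' to ``non-decreasing'', I will argue by contradiction: if $\underline\rho(a) > \underline\rho(b)$ for some $0 < a < b$, the continuity of $\underline\rho$ on $[0,b]$ together with $\underline\rho(0) = 0 < \underline\rho(a)$ forces its maximum on $[0,b]$ to be attained at an interior point $s_* \in (0,b)$, producing a local maximum in $(0,\infty)$ with positive value, which is impossible.

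For (ii), I would suppose that $s_0 \in (0,\infty)$ is a local maximum of $s \mapsto \underline\rho(s)/s$ with $K := \underline\rho(s_0)/s_0 > 0$, and test with the perturbed linear function $\phi(s) = Ks + \delta(s-s_0)^2$. Since $\underline\rho(s) \leq Ks$ for $s$ near $s_0$ with equality at $s_0$, the point $s_0$ is a strict local maximum of $\underline\rho - \phi$, and $\phi'(s_0) = K$. The key algebraic identity $\underline\rho(s_0) - s_0\phi'(s_0) = \underline\rho(s_0) - Ks_0 = 0$ kills both the $\underline\rho$-term and the $\tau$-factor in the exponent, reducing the subsolution inequality to
\[
K^2 + R_{1,*}(s_0) + R_{2,*}(s_0)\int_0^{\tau_0}\!\!\int_\mathbb{R} \Gamma(\tau,y)\, e^{Ky}\,dy\,d\tau \leq 0,
\]
which I intend to contradict via $K^2 > 0$ together with the positivity of $R_{1,*} + R_{2,*}$ and a suitable lower bound on the nonlocal integral; this bound is straightforward when $\Gamma$ is symmetric in $y$ but requires more care in general.

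Finally, for (iii) and (iv), I will use (ii) together with continuity to exclude oscillation of $\underline\rho(s)/s$ at infinity. If $\liminf_{s\to\infty} \underline\rho(s)/s < \limsup_{s\to\infty} \underline\rho(s)/s$, then picking any $\ell$ strictly between these two values allows me (by oscillation and continuity of $\underline\rho/s$) to extract arbitrarily large compact intervals $[s_1,s_2]$ on which $\underline\rho(s)/s \leq \ell$ at the endpoints but exceeds $\ell$ somewhere in the interior. The maximum of $\underline\rho/s$ on $[s_1,s_2]$ is thus attained at an interior point with positive value, contradicting (ii); hence the limit exists in $[0,\infty]$. For (iv), finiteness of the limit yields boundedness of $\underline\rho/s$ on $[S,\infty)$ for some large $S$, while on $(0,S]$ the local Lipschitz continuity of $\underline\rho$ at $0$ combined with $\underline\rho(0) = 0$ gives $\underline\rho(s) \leq Cs$ and hence $\underline\rho/s \leq C$. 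The step I expect to be most delicate is the positivity argument in (ii) for asymmetric $\Gamma$, where the lower bound on $\int \Gamma e^{Ky}\,dy\,d\tau$ requires a more careful use of the KPP structure than the crude $e^{c\tau}\geq 1$ estimate available for (i).
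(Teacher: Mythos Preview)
Your approach is essentially the same as the paper's. The only cosmetic difference is in (ii): the paper first passes to $\sigma(s)=\underline\rho(s)/s$, notes that $\sigma$ is a viscosity subsolution of a transformed equation, and tests at a positive local maximum of $\sigma$ with the constant test function; you instead test $\underline\rho$ directly with $\phi(s)=Ks+\delta(s-s_0)^2$. Since $\phi'(s_0)=K=\underline\rho(s_0)/s_0$ and $\underline\rho(s_0)-s_0\phi'(s_0)=0$, both routes produce the identical inequality
\[
K^2+R_{1,*}(s_0)+R_{2,*}(s_0)\int_0^{\tau_0}\!\!\int_{\mathbb{R}}\Gamma(\tau,y)\,e^{Ky}\,dy\,d\tau\leq 0.
\]
The paper then simply writes the contradiction $R_1(s_0)+R_2(s_0)<\cdots\leq 0$, which tacitly uses $\int\Gamma(\tau,y)e^{Ky}\,dy\,d\tau\geq 1$; so the delicate point you flag for asymmetric $\Gamma$ is precisely the step the paper glosses over, and your caution there is well placed rather than a defect relative to the paper. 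Parts (iii) and (iv) are dispatched in the paper in a single line each (``follows directly from (ii)'' and a bound near $s=0$ via $\underline\rho\in\mathrm{Lip}_{loc}([0,\infty))$); your oscillation argument for (iii) spells out what the paper leaves implicit.
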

\begin{proof}
By $R_2(s) \geq 0$ and $\underline\rho \geq 0$, it follows that $\underline\rho$ satisfies $-s\underline\rho' + |\underline\rho'|^2 + R_1(s) \leq 0$ and hence
$$
|\underline\rho'|^2 \leq {s^2 + 2\|R_1\|_\infty} \quad \text{ in }(0,\infty)
$$
in the viscosity sense. It follows from \cite[Proposition 1.14]{Ishii2013} that 
$\underline\rho \in \textup{Lip}_{loc}([0,\infty))$. We first show assertion (i). 
Since $\underline\rho(0) = 0$ and $\underline\rho(s) \to \infty$ as $s \to \infty$, it suffices to show that there does not exist $s_0>0$ such that $\underline\rho(s_0)>0$ and $s\mapsto \underline\rho(s)$ has a local maximum points at $s_0$. Assume to the contrary, then by definition of viscosity solution (using $\phi\equiv 0$ as test function) we have
$$
R_1(s_0)+R_2(s_0)<\underline\rho(s_0) + R_1(s_0) + R_2(s_0) \int_0^{\tau_0} \int_\mathbb{R} \Gamma(\tau,y) e^{\tau \underline\rho(s_0)}\,d\tau dy \leq 0,
$$
which is a contradiction to (H4).  This proves assertion (i).

The assertion (ii) can be proved similarly by considering
 $\sigma(s) = \rho(s)/s$, which is a viscosity subsolution of
$$
\min\{\sigma,-s^2 \sigma' + |\sigma + s\sigma'|^2 + R_1(s) + R_2(s) \int_0^{\tau_0} \int_\mathbb{R} \Gamma(\tau,y) e^{ -s^2 \sigma' \tau + (\sigma + s\sigma')y}\,d\tau dy \} = 0.
$$
Next, we show (iii).
Suppose there exists $s_0 >0$ such that 
$\sigma$ attains a local maximum at some $s_0 >0$ and that $\sigma(s_0) >0$. This implies
$$
R_1(s_0)+R_2(s_0)<|\sigma(s_0)|^2 + R_1(s_0) + R_2(s_0) \int_0^{\tau_0} \int_\mathbb{R} \Gamma(\tau,y) e^{\sigma(s_0)y}\,d\tau dy \leq 0,
$$
which is a contradiction. This proves (ii). (iii) follows directly from (ii).

Next, we assume $\lim\limits_{s \to \infty} \frac{\underline\rho(s)}{s}<\infty$ and prove (iv).
Recall that $\underline\rho \in \textup{Lip}_{loc}([0,\infty))$ and 
$\underline\rho(0) = 0$, so that $\lim\limits_{s \to 0^+} \underline\rho(s)/s <\infty$. By (ii), we  have $\sup\limits_{s >0} {\underline\rho(s)}/{s} \leq \max\left\{\lim\limits_{s \to 0^+} \underline\rho(s)/s, \lim\limits_{s \to \infty} \underline\rho(s)/s \right\} < \infty$.
\end{proof}

    By Lemma \ref{lem:2.3}(iii), $\lim\limits_{s \to \infty} \frac{\underline{\rho}(s)}{s} $ exists.
By dividing in to the following two cases,
$$
{\rm(i)}\,\, 0<\lim_{s \to \infty} \frac{\underline{\rho}(s)}{s} < \infty,\quad {\rm(i')}\,\, \lim_{s \to \infty} \frac{\underline{\rho}(s)}{s}=\infty, 
$$
we will give several novel conditions so that \eqref{rhoeq1'} admits a comparison principle. 
\begin{proposition}\label{cp:rho1}
Suppose $R_i(s),i=1,2$ be given by {\rm(H4)}.
    Let $\overline{\rho}$ and $\underline\rho$ be non-negative super- and sub-solutions of \eqref{rhoeq1'} in $(0,\infty)$, such that 
\begin{equation}
\underline{\rho}(0) \leq  \overline\rho(0),\quad \text{ and }\quad \lim_{s \to \infty} \frac{\underline{\rho}(s)}{s} \leq \liminf_{s \to \infty}  \frac{\overline{\rho}(s)}{s}.
\end{equation}
If $\lim\limits_{s \to \infty} {\underline\rho(s)}= \infty$ and $\lim\limits_{s \to \infty} \frac{\underline\rho(s)}{s} <\infty$, then $\underline\rho(s) \leq \overline\rho(s)$ in $[0,\infty)$.
\end{proposition}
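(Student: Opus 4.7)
The plan is a contradiction argument via the classical doubling-of-variables technique, adapted to the unbounded half-line by a linear penalization $\epsilon s$ that exploits the asymptotic hypothesis $\lim_{s\to\infty}\underline\rho(s)/s \le \liminf_{s\to\infty}\overline\rho(s)/s$ to localize the analysis to a compact set. This works thanks to the strict monotonicity of the Hamiltonian in its ``$q$''-argument, namely $\partial_q H \ge 1$ (from non-negativity of $R_2$ and $\Gamma$), which plays the role of the usual ``proper'' condition in Hamilton--Jacobi comparison theory. Assume for contradiction that $M_0 := \sup_{s>0}(\underline\rho(s)-\overline\rho(s))>0$, and introduce
$$
M_\epsilon := \sup_{s\ge 0}\bigl[\underline\rho(s) - \overline\rho(s) - \epsilon s\bigr]
$$
for small $\epsilon>0$. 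The boundary inequality $\underline\rho(0) \le \overline\rho(0)$ together with the asymptotic ratio hypothesis force $\underline\rho(s)-\overline\rho(s)-\epsilon s \to -\infty$ as $s\to\infty$, so $M_\epsilon$ is attained at some point of $(0,\infty)$ and $M_\epsilon \uparrow M_0 > 0$ as $\epsilon \to 0^+$.

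Next, apply doubling of variables to
$$
\Phi_\alpha(s,s') := \underline\rho(s) - \overline\rho(s') - \tfrac{\alpha}{2}(s-s')^2 - \tfrac{\epsilon}{2}(s+s'),
$$
whose supremum $\ge M_\epsilon$ is attained at some $(s_\alpha, s'_\alpha)\in(0,\infty)^2$ by the same growth considerations. Standard compactness arguments yield $\alpha(s_\alpha - s'_\alpha)^2 \to 0$ and $(s_\alpha,s'_\alpha) \to (\tilde s,\tilde s)$ along a subsequence for some $\tilde s \in (0,\infty)$. Moreover $\underline\rho(s_\alpha) - \overline\rho(s'_\alpha) \ge M_\epsilon > 0$, so the sub-solution property of $\underline\rho$ forces the Hamiltonian branch of the $\min$ to be non-positive at $s_\alpha$. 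Setting $p_\alpha := \alpha(s_\alpha-s'_\alpha) + \tfrac{\epsilon}{2}$, the viscosity inequalities then read
$$
H\bigl(s'_\alpha,\,\overline\rho(s'_\alpha) - s'_\alpha(p_\alpha-\epsilon),\,p_\alpha-\epsilon\bigr) \ge 0 \ge H_*\bigl(s_\alpha,\,\underline\rho(s_\alpha) - s_\alpha p_\alpha,\,p_\alpha\bigr),
$$
and the $p^2$-coercivity of $H$ delivers a uniform bound on $p_\alpha$, so $p_\alpha\to k$ for some finite $k$ along a further subsequence.

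The crux is to exploit $\partial_q H \ge 1$: since $\underline\rho(s_\alpha) - \overline\rho(s'_\alpha) \ge M_\epsilon$, replacing the ``$q$''-argument of the sub-solution by $\overline\rho(s'_\alpha) - s_\alpha p_\alpha$ costs at least $M_\epsilon$, yielding
$$
H\bigl(s'_\alpha,\,\overline\rho(s'_\alpha) - s'_\alpha(p_\alpha-\epsilon),\,p_\alpha-\epsilon\bigr) - H_*\bigl(s_\alpha,\,\overline\rho(s'_\alpha) - s_\alpha p_\alpha,\,p_\alpha\bigr) \ge M_\epsilon.
$$
Passing to the limit $\alpha \to \infty$, \emph{provided} $\tilde s$ is a continuity point of both $R_1$ and $R_2$ (so $H_* = H$ at $\tilde s$), the left-hand side reduces to a difference of $H$ at a common state in which only the $s$-slot and $p$-slot differ by $O(\epsilon)$. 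By direct computation using the finite exponential moments of $\Gamma$ guaranteed by (H3), this difference is $O(\epsilon)$ uniformly as $\epsilon\to 0^+$. Since $M_\epsilon \to M_0 > 0$, this yields the desired contradiction for $\epsilon$ small enough.

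The main obstacle is precisely guaranteeing that $\tilde s$ falls in the continuity set of $R_1$ and $R_2$: hypothesis (H4) only provides $R_i = \underline R_i$ almost everywhere, and the naive penalty $\epsilon s$ may select a jump point where $H$ and $H_*$ disagree, in which case the final $O(\epsilon)$ estimate breaks down. I would handle this by adding a small smooth perturbation $\delta\, g(s)$ to the penalty, with a bump $g$ chosen so that the perturbed maximizer is forced onto the full-measure continuity set (using the fact that, under any of the alternatives in (H4), the continuity set of $R_i$ is dense); then taking $\delta \to 0$ recovers the original penalty and completes the argument. The ancillary technicalities---uniform bounds on $p_\alpha$ and interchanging the limit with the nonlocal integral in $H$---are routine consequences of the $p^2$-coercivity of $H$ and dominated convergence using (H3).
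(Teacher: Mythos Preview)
Your route is genuinely different from the paper's. The paper does \emph{not} run a doubling argument in the variable $s$; instead it lifts to two variables via $w(t,x)=t\rho(x/t)$, checks that $\underline w,\overline w$ are sub-/super-solutions of $\min\{w_t+\tilde H(x/t,w_x),\,w\}=0$ on $(0,\infty)^2$ (with $\tilde H$ defined implicitly by $H(s,-\tilde H(s,p),p)=0$), verifies the parabolic-boundary ordering on $\{x=0\}$ from $\underline\rho(0)\le\overline\rho(0)$ and on $\{t=0\}$ from the ratio hypothesis together with $\sup_{s>0}\underline\rho(s)/s<\infty$ (Lemma~\ref{lem:2.3}(iv)), and then invokes a black-box comparison result (Theorem~\ref{CP}, taken from \cite{LLL_2020b}). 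The decisive structural hypothesis there is (A2): at each point one can choose a \emph{direction} $(h_0,k_0)$ so that $\tilde H(y/s,p)-\tilde H(x/t,p)$ is controlled by a modulus whenever $(t-s,x-y)$ points roughly along $(h_0,k_0)$. The paper verifies (A2) case by case from the local monotonicity alternatives in (H4); this tilted-doubling device (Tourin, Ishii) is exactly how the discontinuity of $R_1,R_2$ is absorbed.

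Your sketch has a genuine gap at precisely this point. The fix you propose---perturb the penalty by $\delta g$ so that the limit point $\tilde s$ lands in the full-measure continuity set of $R_i$, then let $\delta\to 0$---is not justified and, as stated, does not work. No mechanism is given for choosing $g$: the maximizer of the u.s.c.\ function $\underline\rho-\overline\rho-\epsilon s$ may sit at a jump of $\overline\rho$, and small $C^1$ perturbations need not dislodge it from the (countable) discontinuity set of $R_i$; density of the continuity set is not enough to force this. More importantly, your scheme makes no use of the local monotonicity in (H4), which is the only information available to control $R_i(s'_\alpha)-R_{i,*}(s_\alpha)$ when $s_\alpha$ and $s'_\alpha$ straddle a jump. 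The correct amendment---if you wish to stay in one dimension---is to replace the symmetric penalty $\tfrac{\alpha}{2}(s-s')^2$ by an asymmetric one such as $\tfrac{\alpha}{2}(s-s'-\beta)^2$, with the sign of $\beta$ dictated by the local monotonicity of $R_i$ (or $R_1+R_2$) near the limit point, so that $s_\alpha-s'_\alpha$ has a prescribed sign and the jump contributes with the favorable sign. That is the one-dimensional analogue of the paper's verification of (A2), and without it the final ``$O(\epsilon)$'' estimate does not follow.
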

\begin{proof}
See Appendix \ref{sec:A}.
\end{proof}
\begin{proposition}\label{cp:rho}
Let $R_i(s),i=1,2$ be given by {\rm(H4)}.
    Let $\overline{\rho}$ and $\underline\rho$ be non-negative super- and sub-solutions of \eqref{rhoeq1'} in $(0,\infty)$, such that 
\begin{equation}
\underline{\rho}(0) \leq  \overline\rho(0),\quad \text{ and }\quad \lim_{s \to \infty} \frac{\underline{\rho}(s)}{s} \leq \liminf_{s \to \infty}  \frac{\overline{\rho}(s)}{s}.
\end{equation}
If $\lim\limits_{s \to \infty} \frac{\underline\rho(s)}{s} =\infty$ and one of the following holds:
\begin{itemize}
    \item [{\rm(i)}] $s\mapsto R_2(s)$ is non-increasing in $[s_0,\infty)$ for some $s_0$,
    \item [{\rm(ii)}] $\lim\limits_{s \to +\infty}R_2(s)$ exists and is positive, and 
 $\displaystyle R_2(s) \geq  \left(\sup_{(s,\infty)}R_2\right) - o\left(\frac{1}{s^2}\right)$ for $s \gg 1$,
     


\item [{\rm(iii)}] One of $\lim\limits_{s \to +\infty}R_i(s)$ $(i=1,2)$ exists, and $\Gamma(\tau,y) = 0$ in $[0,\tau_1]\times (-\infty,0),$ for some $\tau_1 \in (0,\tau_0]$, and $\limsup\limits_{s \to +\infty} R_2(s) >0$,

\end{itemize}
then  $\underline{\rho}(s) \leq \overline\rho(s)$ in $[0,\infty)$.
\end{proposition}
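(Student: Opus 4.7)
The new difficulty over Proposition \ref{cp:rho1} is that $\lim_{s\to\infty}\underline\rho(s)/s=\infty$ forces the sub-solution's derivative to blow up, which in turn makes the exponential factor $\int\Gamma(\tau,y)e^{(\underline\rho-s\underline\rho')\tau+\underline\rho' y}\,dyd\tau$ unbounded in $s$. Any comparison argument must control this growth by leveraging extra structure at infinity: monotonicity of $R_2$ in case (i), a quantitative convergence rate of $R_2$ in case (ii), or a one-sided support of $\Gamma$ in case (iii). The overall plan is a doubling-of-variables argument combined with a tailored penalty function that forces the maximum of $\underline\rho-\overline\rho$ to be attained at an interior point.

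I would proceed by contradiction. Suppose there exists $s_*>0$ with $\underline\rho(s_*)>\overline\rho(s_*)$. For small parameters $\alpha,\epsilon,\delta>0$, I consider
\[
\Phi_{\alpha,\epsilon,\delta}(s,t)=\underline\rho(s)-\overline\rho(t)-\frac{(s-t)^2}{2\alpha}-\epsilon\Psi(s)-\delta(s+t),
\]
where $\Psi$ is a penalty chosen case by case so that $\Phi$ attains a positive maximum at some finite $(s_\alpha,t_\alpha)$. The boundary contributions are controlled by $\underline\rho(0)\le\overline\rho(0)$, and the penalty $\epsilon\Psi$ absorbs the growth at infinity. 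Passing $\alpha\to 0$ with standard doubling arguments yields $s_\alpha,t_\alpha\to s_0$ for some $s_0\in(0,\infty)$, together with viscosity test jets $p=(s_\alpha-t_\alpha)/\alpha+\epsilon\Psi'(s_\alpha)+\delta$ for the sub-solution and $\tilde p=(s_\alpha-t_\alpha)/\alpha-\delta$ for the super-solution. Subtracting the two viscosity inequalities at $(s_\alpha,t_\alpha)$ and using the upper/lower semicontinuity consequences of (H4) ($R_i^*=R_{i,*}$ a.e.) produces a strictly negative term of order $\delta$, contradicting the assumption $\underline\rho(s_*)>\overline\rho(s_*)$ once $\epsilon$ is sent to $0$.

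The choice of $\Psi$ is case-dependent. In case (i), I would use the monotonicity of $R_2$ on $[s_0,\infty)$ to translate $\overline\rho$ to the right and directly compare $\underline\rho$ with a shifted super-solution, reducing the problem to Proposition \ref{cp:rho1} on a truncated interval. In case (ii), the rate estimate $R_2(s)\ge\sup_{(s,\infty)}R_2-o(1/s^2)$ is exactly what is needed to absorb a quadratic penalty $\Psi(s)=s^2/2$ without spoiling the super-solution inequality: with $p=\tilde p+\epsilon s+O(\delta)$, the error introduced by the perturbation in the exponential Hamiltonian is $O(\epsilon s^2)\cdot R_2(s)$, which is dominated by $\epsilon(\lim R_2)$ when combined with the hypothesis. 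In case (iii), the crucial observation is that $\Gamma$ is supported in $\{y\ge 0\}$ on $[0,\tau_1]$, so $p\mapsto\int\Gamma e^{py}$ is monotone increasing; combined with $R_1,R_2$ having limits at infinity and $\limsup R_2>0$, this monotonicity lets me use a linear penalty $\Psi(s)=s$ and still obtain the key sign in the viscosity inequality.

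The main obstacle will be case (iii), since the one-sided kernel assumption is imposed only on the subinterval $[0,\tau_1]\subset[0,\tau_0]$, not on the full delay window. The work there consists in splitting the $\tau$-integral at $\tau_1$ and showing that the $[0,\tau_1]$ part contains enough of the mass of $\Gamma$ to carry the monotonicity argument, while the $[\tau_1,\tau_0]$ part is controlled by the asymptotic constancy of $R_1,R_2$ and the finiteness of exponential moments in (H3). A secondary technical point common to all three cases is to verify that the penalized supremum is attained and not pushed to infinity, which uses Lemma \ref{lem:2.3}(iii) to identify the common asymptotic slope $\lim\underline\rho/s=\lim\overline\rho/s=\infty$ and ensures that $\Phi_{\alpha,\epsilon,\delta}(s,t)\to-\infty$ as $s+t\to\infty$ for fixed $\epsilon>0$.
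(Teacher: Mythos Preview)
Your approach is fundamentally different from the paper's and contains a genuine gap. The paper does \emph{not} attempt a direct doubling-of-variables argument on $\underline\rho,\overline\rho$. Instead, it exploits that $\underline\rho$ is locally Lipschitz (hence differentiable on a full-measure set $\mathcal S$), chooses a sequence $s_k\in\mathcal S$ with $s_k\to\infty$ and $R_i(s_k)\to\limsup_{s\to\infty}R_i(s)$, and then replaces $\underline\rho$ by the truncated sub-solution
\[
\underline\rho_k(s)=\begin{cases}\underline\rho(s)-\nu_k & s\in[0,s_k),\\ \underline\rho(s_k)+(s-s_k)\underline\rho'(s_k)-\nu_k & s\in[s_k,\infty),\end{cases}
\]
for a suitable error $\nu_k\to 0$. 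The point is that $\underline\rho_k$ has \emph{finite} asymptotic slope, so Proposition~\ref{cp:rho1} applies to the pair $(\underline\rho_k,\overline\rho)$; letting $k\to\infty$ gives the result. The hypotheses (i)--(iii) enter only in verifying that (a) $\underline\rho_k$ remains a sub-solution on $[s_k,\infty)$ and (b) $\nu_k\to 0$. In case~(ii), the sub-solution inequality at $s_k$ forces the exponential integral to be $O(s_k^2)$, and the rate $\sup_{[s_k,\infty)}R_2-R_2(s_k)=o(1/s_k^2)$ is exactly what makes the product $\nu_k$ vanish.

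Your plan has two concrete problems. First, in case~(i) you propose to ``translate $\overline\rho$ and reduce to Proposition~\ref{cp:rho1} on a truncated interval,'' but Proposition~\ref{cp:rho1} requires $\lim_{s\to\infty}\underline\rho(s)/s<\infty$, and no translation of $\overline\rho$ changes the asymptotic slope of $\underline\rho$; you have not produced a sub-solution with finite slope. Second, in case~(ii) your accounting of the exponential term is incorrect: perturbing $p\mapsto p+\epsilon s$ changes $\int\Gamma e^{py}\,dy$ multiplicatively by a factor like $\int\Gamma e^{\epsilon s y}$, not additively by ``$O(\epsilon s^2)\cdot R_2(s)$''. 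Moreover, since $|\underline\rho'(s)|\le s+O(1)$ gives only $\underline\rho(s)\le s^2/2+O(s)$, a quadratic penalty $\epsilon s^2/2$ does not force $\Phi_{\alpha,\epsilon,\delta}\to-\infty$ along rays with $|s-t|$ bounded unless you also use growth of $\overline\rho$; the claim that the maximum is attained at a finite point is not established. The paper sidesteps all of this by never doubling variables against a superlinear $\underline\rho$: it linearizes $\underline\rho$ beyond $s_k$ first.
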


\begin{proof}
We first prove case (ii), recall from the proof of Lemma \ref{lem:2.3} that $\underline{\rho}$ is locally Lipchitz continuous, so that it is differentiable in $\mathcal{S}$, where $[0,\infty) \setminus \mathcal{S}$ has zero measure. Since $\underline{\rho}$ is a viscosity subsolution, it must satisfy 
\begin{equation}\label{irhoeq1'}
  \min \{\underline{\rho}, \underline{\rho} - s \underline{\rho}' + |\underline{\rho}'| + \int_0^{\tau_0} \hspace{-.1cm} \int_\mathbb{R} \Gamma(\tau,y) e^{\tau (\underline{\rho}-s\underline{\rho}') + y \underline{\rho}'}\,dyd\tau \}\leq 0 \quad \text{ for each }s \in \mathcal{S}.
\end{equation}
Since $\underline\rho(\infty) = \infty$ and that $R_1$ is upper semi-continuous and locally monotone (by (H4)) and that $R_2(+\infty) >0$, we can choose a sequence $\{s_k\} \subset \mathcal{S}$ such that 
\begin{equation}\label{eq:sk}
s_k \to \infty, \quad \underline\rho(s_k) >1,\quad R_1(s_k) \to \limsup_{s \to \infty} R_1(s) \quad  \text{ and }\quad  \inf\limits_{k} R_2(s_k) >0.
\end{equation}
For each $k$, denote $a_k=\underline\rho(s_k)$, $b_k=\underline\rho'(s_k)$, then specializing \eqref{irhoeq1'} at $s=s_k$ gives
\begin{equation}\label{eq:2.23b}
a_k - s_k b_k + |b_k|^2 + R_1(s_k) +  R_2(s_k)\int_0^{\tau_0}\int_\mathbb{R} \Gamma(\tau,y) e^{(a_k - s_k b_k)\tau + b_k y}\,dyd\tau \leq 0.
\end{equation}
Using $a_k \geq 0$ and $R(s_k)>0$ we have $|b_k|^2 \leq s_k b_k + |R_1|_\infty$ and hence $|b_k| \leq O(s_k)$. Using the latter, along with $\inf\limits_k R_2(s_k)>0$ in \eqref{eq:2.23b}, we deduce
\begin{equation}\label{eq:2.23c}
\int_0^{\tau_0}\int_\mathbb{R} \Gamma(\tau,y) e^{(a_k - s_k b_k)\tau + b_k y}\,dyd\tau \leq O(|s_k|^2).\end{equation}

Next, define
$$
\nu_k = \left[\sup_{[s_k,\infty)}R_1 - R_1(s_k)\right] +\max\left\{0,\sup_{[s_k,\infty)}R_2 - R_2(s_k)\right\} \int_0^{\tau_0}\int_\mathbb{R} \Gamma(\tau,y) e^{(a_k - s_k b_k)\tau + b_k y}\,dyd\tau,
$$
then we have $\nu_k \to 0$. Indeed, $\left[\sup_{[s_k,\infty)}R_1 - R_1(s_k)\right] \to 0$ by our choice of $s_k$, and 
\begin{align*}
\left[\sup_{[s_k,\infty)}R_2 - R_2(s_k)\right]\int_0^{\tau_0}\int_\mathbb{R} \Gamma(\tau,y) e^{(a_k - s_k b_k)\tau + b_k y}\,dyd\tau&\leq o\left(\frac{1}{|s_k|^2}\right)  O(|s_k|^2) = o(1),
\end{align*}
where we used  the assumption $\displaystyle R_2(s_k) \geq  \left(\sup_{(s_k,\infty)}R_2\right) - o\left(\frac{1}{s_k^2}\right)$ and \eqref{eq:2.23c}.

Define $\underline\rho_k \in \textup{Lip}_{loc}([0,\infty))$ by 
$$
\underline\rho_k(s):= \begin{cases}
\underline\rho(s) -\nu_k  &\text{ for }s \in [0,s_k),\\
\underline\rho(s_k)  + (s-s_k) \underline\rho'(s_k)  - \nu_k&\text{ for }s \in [s_k,\infty).
\end{cases}
$$
We claim that 
$\underline{\rho}_k(s)$ satisfies 
\begin{equation}\label{rholeq}
    \underline{\rho}_k(s)-s\underline{\rho}_k'(s)+|\underline{\rho}_k'(s)|^2+{R}_1(s)+{R}_2(s)\int_{0}^{\tau_0}\int_{\R }\Gamma(\tau, y)e^{(\underline{\rho}_k(s)-s\underline{\rho}_k'(s))\tau+\underline{\rho}_k' y} dy d\tau \leq 0 
\end{equation}
in the viscosity sense in $(0,\infty)$. 
Indeed, it is easy to see that $\underline{\rho}_k(s)$ remains a viscosity subsolution to \eqref{rholeq} in $[0,s_k)$. It remains to show that it is a classical solution to \eqref{rholeq} in $[s_k,\infty)$. Indeed, if we denote $a_k=\underline\rho(s_k)$ and $b_k=\underline\rho'(s_k)$), then
for $s \geq s_k$, 
\begin{align*}
    &\underline{\rho}_k(s)-s\underline{\rho}_k'(s)+|\underline{\rho}_k'(s)|^2+{R}_1(s)+{R}_2(s)\int_{0}^{\tau_0}\int_{\R }\Gamma(\tau, y)e^{(\underline{\rho}_k(s)-s\underline{\rho}_k'(s))\tau+\rho' y} dy d\tau\\
    &\leq a_k - s_k b_k + |b_k|^2 - \nu_k + \left(\sup_{[s_k,\infty)}R_1\right) +\left( \sup_{[s_k,\infty)}R_2 \right)\int_{0}^{\tau_0}\int_{\R }\Gamma(\tau,y)e^{(a_k - s_k b_k)\tau+b_k y} dy d\tau\\
    &\leq a_k - s_k b_k + |b_k|^2 + R_1(s_k) + R_2(s_k) \int_{0}^{\tau_0}\int_{\R }\Gamma(\tau,y)e^{(a_k - s_k b_k)\tau+b_k y} dy \leq 0.
\end{align*}
where we used 
$$
\underline{\rho}_k(s) - s\underline\rho'_k(s) = \underline{\rho}(s_k) - s_k\underline\rho'(s_k) - \nu_k = a_k - s_k b_k - \nu_k \quad \text{ for }s \geq s_k
$$
for the first inequality, the definition of $\nu_k$ for the second inequality, 
and \eqref{eq:2.23b} for the last inequality. This proves that 
$\underline{\rho}_k$ is a viscosity subsolution of \eqref{rhoeq1'} in $(0,\infty)$.

Now, note that $\overline\rho$ and $\underline\rho_k$ form a pair of viscosity super- and subsolution of \eqref{rhoeq1'} that satisfies the setting of Proposition \ref{cp:rho1}. By comparison, it follows in particular that
$$\underline\rho(s)- \nu_k  \leq \overline\rho(s)  \quad \text{ for }s \in [0,s_k].$$
Letting $k \to \infty$, then $s_k \to \infty$ and $\nu_k \to 0$, and the desired conclusion follows. This proves case (ii). Case (i) can be proven exactly as case (ii) (but the assumption $R_2(+\infty)>0$ is not needed).

Next, we show (iii). In this case, we choose $s_k \in \mathcal{S}$ such that
\begin{equation}\label{eq:2.23d}
R_1(s_k) \to \limsup\limits_{s \to \infty} R_1(s_k) \quad \text{ and } \quad R_2(s_k) \to \limsup\limits_{s \to \infty} R_2(s_k).
\end{equation}
This is possible in view of the assumption of case (iii), and local monotonicity of $s\mapsto R_i(s)$. 
Denote $a_k = \underline\rho(s_k)$, 
and $b_k = \underline\rho'(s_k)$. From \eqref{eq:2.23b}, 
we observe that
\begin{equation}\label{eq:rhoee1}
a_k  - s_k b_k \leq \|R_1\|_{\infty} \quad \text{ for all }k \geq 1.
\end{equation}

Fix an arbitrary $\nu>0$, and choose $\tau' \in (0,\tau_1)$ and $k'\in \mathbb{N}$ such that for all $k \geq k'$,
$$
\max\left\{\sup_{[s_k,\infty)} R_1 - R_1(s_k), \|R_2\|_\infty  \int_0^{\tau'}  \int_\mathbb{R} \Gamma(\tau,y)e^{\tau\|R_1\|_\infty}\,dyd\tau \right\}< \frac{\nu}{2},
$$
which is possible in view of \eqref{eq:2.23d}. 
Define function $\underline{\rho}_k(s)$ by 
$$
\underline\rho_k(s):= \begin{cases}
\underline\rho(s) -\nu  &\text{ for }s \in [0,s_k],\\
\underline\rho(s_k)  + (s-s_k) \underline\rho'(s_k)  - \nu&\text{ for }s \in (s_k,\infty).
\end{cases}
$$
We claim that $\underline{\rho}_k$ is a viscosity subsolution to \eqref{rhoeq1'} for $k \gg 1$. Again, it suffices to show, for $k\gg 1$, that $\underline{\rho}_k$ is a classical solution of \eqref{rholeq}  
 in $[s_k,\infty)$.

To this end, observe 
the assumption of case (iii) implies for each $\tau' \in (0,\tau_1]$,
\begin{align}
&\int_0^{\tau_0} \int_\mathbb{R} \Gamma(\tau,y) e^{(a_k - sb_k)\tau - b_k y}\,dyd\tau \notag \\
&=  \int_0^{\tau'}  \int_0^{\infty} \Gamma(\tau,y) e^{(a_k - sb_k)\tau - b_ky}\,dyd\tau + \int_{\tau'}^{\tau_0} \int_\mathbb{R} \Gamma(\tau,y) e^{(a_k - sb_k)\tau - b_k y}\,dyd\tau \notag \\
&\leq \int_0^{\tau'}  \int_0^{\infty} \Gamma(\tau,y) e^{\tau \|R_1\|_\infty}\,dyd\tau+ \int_{\tau'}^{\tau_0} \int_\mathbb{R} \Gamma(\tau,y) e^{(a_k - sb_k)\tau - b_k y}\,dyd\tau, \label{eq:rhoeee}
\end{align}
where $a_k = \underline\rho(s_k)$, and $b_k = \underline\rho'(s_k)$, and we used \eqref{eq:rhoee1} and $b_k \geq 0$ (as $\underline\rho'(s) \geq 0$ for all $s$).

Observe that for $k \gg 1$ and $s \in [s_k,\infty)$, 
\begin{align*}
&\underline\rho_k(s) - s \underline\rho_k'(s) + |\underline\rho_k'(s)|^2 + R_1(s) + R_2(s) \int_0^{\tau_0} \int_\mathbb{R} \Gamma(\tau,y) e^{(\underline\rho_k(s) - s\underline\rho_k'(s))\tau - \underline\rho_k'(s) y}\,dyd\tau \notag\\
  &=   a_k - \nu - s_k b_k + |b_k|^2 + R_1(s) + R_2(s) \int_0^{\tau_0} \int_\mathbb{R} \Gamma(\tau,y) e^{(a_k - \nu- s_kb_k)\tau - b_k y}\,dyd\tau  \\
  &\leq a_k  - s_k b_k + |b_k|^2 + R_1(s_k) + R_2(s) \int_{\tau'}^{\tau_0} \int_\mathbb{R} \Gamma(\tau,y) e^{(a_k - \nu- s_kb_k)\tau - b_k y}\,dyd\tau  \\
  &\leq a_k  - s_k b_k + |b_k|^2 + R_1(s_k) + e^{-\nu \tau'}(\sup\limits_{[s_k,\infty)}R_2(s)) \int_{\tau'}^{\tau_0} \int_\mathbb{R} \Gamma(\tau,y) e^{(a_k - s_kb_k)\tau - b_k y}\,dyd\tau \\
  &\leq a_k  - s_k b_k + |b_k|^2 + R_1(s_k) + R_2(s_k)\int_{\tau'}^{\tau_0} \int_\mathbb{R} \Gamma(\tau,y) e^{(a_k - s_kb_k)\tau - b_k y}\,dyd\tau \leq 0,
\end{align*}
where we used our choice of $\nu$, $\tau'$ and \eqref{eq:rhoeee} for the first inequality, and that 
$\frac{ \sup_{[s_k,\infty)} R_2 }{R_2(s_k)} \to 1$ (since $R_2(s_k) \to \limsup\limits_{s \to \infty} R_2>0$) in the third inequality.
Hence, \eqref{rholeq} holds for $k \gg 1$ and $s \in [s_k,\infty)$. 
Now, $\overline\rho$ and $\underline\rho_k$ defines a pair of viscosity super- and sub-solution of \eqref{rhoeq1'} that satisfies the setting of Proposition \ref{cp:rho1}. Comparison implies that, for each $\nu>0$, 
$$
\underline\rho(s) - \nu \leq \overline\rho(s) \text{ for }s \in [0,s_k] \text{ and }k \gg 1.
$$
Letting $k \to \infty$ and $\nu \searrow 0$, we get $\underline\rho(s) \leq \overline\rho(s)$ in $[0,\infty)$. 
\end{proof}

\begin{proof}[Proof of Propositions \ref{prop:1.2}] 
Suppose (H1)-(H5) hold, and either (i) $\mu \in (0,\infty)$ or (ii) $\mu = \infty$ and one of (H6), (H6$'$) or (H6$''$) holds.
By Proposition \ref{cp:rho}, there is at most one solution to \eqref{rhoeq1''} subject to the boundary conditions $\rho(0)=0$ and $\lim_{s \to \infty} \frac{\rho(s)}{s} = \mu\in(0,\infty]$.

To show existence, let $w_*(t,x)$ and $w^*(t,x)$ be given by \eqref{eq:wstar}, and let 
$\rho_*(s)$ and $\rho^*(s)$ be respectively the super- and sub-solution of \eqref{rhoeq1''} that are given in Corollary \ref{lem:2.2}, i.e. 
$$
w_*(t,x)=t\rho_*\left(\frac{x}{t}\right) \quad \text{ and }\quad w^*(t,x)=t\rho^*\left(\frac{x}{t}\right) \text{ for }(t,x) \in (0,\infty)\times[0,\infty).
$$
By construction in \eqref{eq:wstar}, $w_* \leq w^*$ in $(0,\infty)\times[0,\infty)$, and hence $\rho_* \leq \rho^*$.  

We claim that
\begin{equation}\label{eq:prop:2.1.2}
    \rho_*(0) = \rho^*(0) = 0 \quad \text{ and }\quad \lim_{s \to \infty} \frac{\rho_*(s)}{s} =  \lim_{s \to \infty} \frac{\rho^*(s)}{s}=\mu\in(0,\infty].
\end{equation}
The assertions follow from Proposition \ref{prop:2.1}, since
\begin{equation}\label{eq:prop:2.1.2a}
\rho_*(0) = w_*(1,0) = 0 \quad \text{ and }\quad \rho^*(0) = w^*(1,0) = 0,
\end{equation}
where we used the first part of \eqref{eq:prop:2.1.1}. Also,
\begin{equation}\label{eq:prop:2.1.2b}
\liminf_{s \to \infty} \frac{\rho_*(s)}{s} = \liminf_{s \to \infty} w_*\left(\frac{1}{s},1\right) \geq  w_*(0,1) =  \mu\in(0,\infty], 
\end{equation}
where we used the second part of \eqref{eq:prop:2.1.1}, and that $w_*$ is l.s.c.. Similarly, we have 
\begin{equation}\label{eq:prop:2.1.2c}
\limsup_{s \to \infty} \frac{\rho^*(s)}{s} = \limsup_{s \to \infty} w^*\left(\frac{1}{s},1\right) \leq  w^*(0,1) =  \mu\in(0,\infty]. 
\end{equation}
We can combine \eqref{eq:prop:2.1.2a}-\eqref{eq:prop:2.1.2c} to obtain \eqref{eq:prop:2.1.2}. This, and the fact that $\rho_*$ and $\rho^*$ are the super- and sub-solution to \eqref{rhoeq1''} enables the application of the comparison result (Proposition \ref{cp:rho}), which implies that $\rho^* \leq \rho_*$ in $(0,\infty)\times [0,\infty)$. Recalling that $\rho^* \geq \rho_*$ in $(0,\infty)\times [0,\infty)$ by construction, we conclude $\rho^* \equiv \rho_*$. This provides the existence of a viscosity solution $\hat\rho$ to \eqref{rhoeq1''} satisfying $\hat\rho(0) = 0$ and $\lim_{s \to \infty} \frac{\hat\rho(s)}{s} = \mu\in(0,\infty]$. 

Finally, Lemma \ref{lem:2.3}(i) says that $\hat\rho(s)$ is non-decreasing, and so $s_{nlp}\in[0,\infty)$ is well-defined. 
\end{proof}

\begin{proof}[Proof of Theorem \ref{thm:1.3}]
Suppose (H1)-(H5) hold, and let $u(t,x)$ be a solution of \eqref{Model} with initial data satisfying (IC$_\mu$) for some $\mu \in (0,\infty)$. Then Proposition \ref{cp:rho} is applicable.

Let $w^*(t,x)$ and $w_*(t,x)$ be given by \eqref{eq:wstar}. From the proof of Proposition \ref{prop:1.2}, we have
$$
w^*(t,x) = t\hat\rho(x/t)  = w_*(t,x) \quad \text{ for }(t,x) \in (0,\infty)\times[0,\infty),
$$
where $\hat\rho(s)$ is the unique viscosity solution to \eqref{rhoeq1''} subject to the boundary conditions $\rho(0)=0$ and $\lim\limits_{s \to \infty} \frac{\rho(s)}{s} = \mu$. Since $\hat\rho(s) = 0$ in $[0,\hat{s}^\mu]$ and $\hat\rho(s) >0$ in $(\hat{s}^\mu,\infty)$, we deduce that
\begin{equation}\label{eq:thm:1.3.1}
w^\ep(t,x) \to 0  \quad \text{ locally uniformly in }\{(t,x): \, 0 \leq x/t < \hat{s}^\mu\}
\end{equation}
and that 
\begin{equation}\label{eq:thm:1.3.2}
\liminf_{\ep \to 0} \inf_{K} w^\ep(t,x) >0 \text{ for each }K \subset\subset \{(t,x): \,  x > \hat{s}^\mu t\}.
\end{equation}

We show the first part of \eqref{eq:1.5}. First, by Remark \ref{rmk:largesp}, there exists some  $\overline s>0$ sufficiently large such that
\begin{equation}
    \lim_{t\to\infty} \sup_{x \geq \overline s t} u(t,x) =0.
\end{equation}
Now for given $\eta>0$, take $K = \{(1,x):\hat{s}^\mu + \eta \leq x \leq \overline{s}\}$ in \eqref{eq:thm:1.3.2}, then 
$$
\lim_{t\to\infty} \sup_{(s_{nlp} + \eta)t \leq x \leq \overline s t} u(t,x) = \lim_{\ep \to 0} \sup_{s_{nlp} + \eta\leq x'\leq \overline s} u\left( \frac{1}{\ep}, \frac{x'}{\ep}  \right) =\lim_{\ep \to 0} \sup_{s_{nlp} + \eta\leq x'\leq \overline s} \exp\left( -\frac{w^\ep\left( 1, x'\right)}{\ep} \right)= 0 . 
$$
This proves the first part of \eqref{eq:1.5}.

To show the second part of \eqref{eq:1.5}, fix $(t_0,x_0)$ such that $x_0/t_0 < \hat{s}^\mu$ and suppose to contrary that there a sequence $\ep = \ep_k \to 0$ and $(t_\ep, x_\ep) \to (t_0,x_0)$ such that
$$
0 < \frac{x_0}{t_0}< \hat{s}^\mu \quad \text{ and }\quad u^\ep(t_\ep,x_\ep)  \to 0.
$$
Now, consider the test function $\phi_\ep(t,x):= |t-t_\ep|^2 + |x-x_\ep|^2$. Since $w^\ep(t,x) \to 0$ uniformly on a neighborhood $B_{2r}(x_0,t_0)$, by taking $\ep$ so small that $(t_\ep,x_\ep) \in B_r(x_0,t_0)$, 
we see that $w^\ep - \phi_\ep$ has an interior maximum point $(t'_\ep,x'_\ep) \in B_{2r}(x_0,t_0)$. 
Observe that 
\begin{equation}\label{eq:2.13}
(t'_\ep,x'_\ep) \to (t_0,x_0) \quad \text{ and }\quad (t_\ep,x_\ep) \to (t_0,x_0) \quad \text{ as }\epsilon \to 0.\end{equation}
And, by construction,
\begin{equation}\label{eq:2.14}
  w^\ep(t'_\ep,x'_\ep)\geq(w^\ep - \phi_\ep)(t'_\ep,x'_\ep)\geq  (w^\ep - \phi_\ep)(t_\ep,x_\ep) = w^\ep(t_\ep,x_\ep) 
\end{equation}
which, in view of $u^\ep(t,x) = \exp(-\frac{1}{\ep} w^\ep(t,x))$, implies that $0 \leq u^\ep(t'_\ep,x'_\ep) \leq   u^\ep (t_\ep,x_\ep)$. Since $u^\ep (t_\ep,x_\ep) \to 0$ by assumption, we also have
\begin{equation}\label{eq:2.15}
 u^\ep(t'_\ep,x'_\ep) \to 0 \quad \text{ as }\epsilon \to 0.
\end{equation}

Next, fix $0<\eta'<1$ such that 
\begin{equation}\label{eq:2.16}
\left[-\eta' + \underline{R}_2\left(\frac{x_0}{t_0}\right)\right](1-\eta')+ \underline{R}_1\left(\frac{x_0}{t_0}\right) >2\eta'.
\end{equation}
Note that the above holds when $\eta'=0$ (by \eqref{eq:H7}), so it also holds for small $\eta'>0$ by continuity. 

Next, fix $M>1$ such that 
\begin{equation}\label{eq:2.17}
\int_0^{\tau_0} \int_{-M}^M \Gamma(\tau,y)\,dyd\tau \geq (1-\eta').
\end{equation}
Then by the definition of $\phi_\ep$ and \eqref{eq:2.13}, and passing to a subsequence if necessary, we see that
\begin{equation}\label{eq:2.18}
\begin{cases}
\partial_t \phi_\ep(t'_\ep,x'_\ep) \to 0,\quad \partial_x \phi_\ep(t'_\ep,x'_\ep) \to 0, \quad \sup\limits_{B_{2r}}|D^2 \phi_\ep| \leq 2,\\
 \int_0^{\tau_0} \int_{-M}^M \Gamma(\tau,y) \partial_u f_{2,\ep}(t'_\ep - \ep \tau, x'_\ep - \ep y,0) e^{\frac{\phi_\ep(t'_\ep,x'_\ep) - \phi_\ep(t'_\ep - \ep \tau, x'_\ep - \ep y)}{\ep}}\,dyd\tau\\\quad \geq  \underline{R}_2\left(\frac{x_0}{t_0}\right) \int_0^{\tau_0} \int_{-M}^M \Gamma(\tau,y)\,dyd\tau + o(1),
\end{cases}
\end{equation}
where $B_{2r}=B_{2r}(t'_\ep, x'_\ep)$ in the supremum, and 
$\underline{R}_2\left(\frac{x_0}{t_0}\right)$ is given in \eqref{eq:h4} and we used \eqref{eq:H7}.

Having chosen $M$, we claim that \eqref{eq:2.15} can be strengthened to
\begin{equation}\label{eq:2.15a}
\sup\limits_{0 \leq \tau \leq \tau_0 \atop |y| \leq M} u^\ep(t'_\ep - \ep \tau, x'_\ep - \ep y)=\sup\limits_{0 \leq \tau \leq \tau_0 \atop |y| \leq M}u\left(\frac{t'_\ep}{\ep} -  \tau, \frac{x'_\ep}{\ep} -  y\right) \to 0 \quad \text{ as }\ep \to 0.
\end{equation}
Indeed, we can rewrite \eqref{Model} as
\begin{equation}\label{eq:2.15b}
\partial_t u -  \partial_{xx} u \geq  -C_0 u \quad \text{ in }(0,\infty)\times\mathbb{R}, 
\end{equation}
since $0 \leq u^\ep \leq M_0$ for some $M_0>0$. Passing to a sequence, we may assume that $$\tilde{u}_\ep(t,x) = u\left(\frac{t'_\ep}{\ep} + t, \frac{x'_\ep}{\ep} + x \right) \to \tilde{u}_0(t,x) \quad \text{ in }C_{loc}(\mathbb{R}\times\mathbb{R}).$$
Moreover, the limit function $\tilde{u}_0$ is a non-negative weak solution of \eqref{eq:2.15b} such that $\tilde{u}_0(0,0)=0$. By the strong maximum principle, we deduce that $\tilde{u}_0(t,x) \equiv 0$ for $t \leq 0$ and $x \in \mathbb{R}$. This shows that $\tilde{u}_\ep \to 0$ locally uniformly in $(-\infty,0]\times\mathbb{R}$, which implies \eqref{eq:2.15a}.

In view of \eqref{eq:H2} and \eqref{eq:2.15a}, we may consider $\ep$ small enough such that
\begin{align}
 \int_0^{\tau_0}\int_{-M}^M \Gamma(\tau,y) \left[ -\eta' + \partial_u f_{2,\ep}(t'_\ep - \ep\tau, x'_\ep - \ep y,0)\right]u^\ep(t'_\ep - \ep \tau, x'_\ep - \ep y)\,dyd\tau \notag\\
\leq \int_0^{\tau_0}\int_{-M}^M \Gamma(\tau,y)  f_{2,\ep} (t'_\ep - \ep\tau, x'_\ep - \ep y, u^\ep(t'_\ep - \ep \tau, x'_\ep - \ep y))\,dyd\tau,
\label{eq:2.19}
\end{align}
and similarly,
\begin{equation}\label{eq:2.20}
(1-\eta') \partial_u  f_{1, \ep}(t'_\ep,x'_\ep,0)u(t'_\ep,x'_\ep) \leq f_{1, \ep}(t'_\ep,x'_\ep,u(t'_\ep,x'_\ep)) .
\end{equation}
Hence, (below $w^\ep,\phi_\ep$ and their derivatives are evaluated at $(t'_\ep,x'_\ep)$, unless otherwise stated)
 \begin{align*}
    &\left[-\eta' + \underline{R}_2\left(\frac{x_0}{t_0}\right)\right](1-\eta')+ o(1)\leq \left[-\eta'+\underline{R}_2\left(\frac{x_0}{t_0}\right)\right] \int_0^{\tau_0} \hspace{-.2cm}\int_{-M}^M \Gamma(\tau,y)\,dyd\tau + o(1)\\
    &= \partial_t \phi_\ep \hspace{-.1cm} -\ep \partial_{x}^2 \phi_\ep \hspace{-.1cm} - |\partial_x \phi_\ep|^2 \hspace{-.1cm} + \hspace{-.1cm} \int_0^{\tau_0} \hspace{-.2cm} \int_{-M}^M \Gamma(\tau,y) [\partial_u f_{2,\ep}(t'_\ep\! - \!\ep \tau, x'_\ep - \ep y,0)-\eta'] e^{\frac{\phi_\ep(t'_\ep,x'_\ep) - \phi_\ep(t'_\ep - \ep \tau, x'_\ep - \ep y)}{\ep}}\,dyd\tau \\
    &\leq \partial_t w^\ep \hspace{-.1cm} -\ep \partial_{x}^2 w^\ep \hspace{-.1cm} - |\partial_x w^\ep|^2  \hspace{-.1cm} +  \hspace{-.1cm} \int_0^{\tau_0}  \hspace{-.2cm} \hspace{-.1cm} \int_{-M}^M  \hspace{-.1cm} \Gamma(\tau,y)[\partial_u f_{2,\ep}(t'_\ep - \ep \tau, x'_\ep - \ep y,0)-\eta'] e^{\frac{w^\ep(t'_\ep,x'_\ep) - w^\ep(t'_\ep - \ep \tau, x'_\ep - \ep y)}{\ep}}\,dyd\tau \\
    &= \partial_t w^\ep \hspace{-.1cm} -\ep \partial_{x}^2 w^\ep - |\partial_x w^\ep|^2 \hspace{-.1cm} + \hspace{-.1cm} \int_0^{\tau_0}\hspace{-.2cm} \int_{-M}^M \Gamma(\tau,y)  \frac{[\partial_u f_{2,\ep}(t'_\ep - \ep \tau, x'_\ep - \ep y,0)-\eta']u^\ep(t'_\ep - \ep \tau, x'_\ep - \ep y)}{u^\ep(t'_\ep,x'_\ep)}\,dyd\tau \\
    &\leq \partial_t w^\ep -\ep \partial_{x}^2 w^\ep - |\partial_x w^\ep|^2 + \int_0^{\tau_0}\hspace{-.2cm} \int_{-M}^M \Gamma(\tau,y)  \frac{f_{2,\ep}(t'_\ep - \ep \tau, x'_\ep - \ep y,u^\ep(t'_\ep - \ep \tau, x'_\ep - \ep y))}{u^\ep(t'_\ep,x'_\ep)}\,dyd\tau \\
    &= -\frac{f_{1, \ep}(t'_\ep,x'_\ep,u^\ep( t'_\ep,x'_\ep))}{u^\ep( t'_\ep,x'_\ep)} - \int_0^{\tau_0}\hspace{-.2cm} \int_{|y|>M} \Gamma(\tau,y)  \frac{f_{2,\ep}(t'_\ep - \ep \tau, x'_\ep - \ep y,u^\ep(t'_\ep - \ep \tau, x'_\ep - \ep y))}{u^\ep(t'_\ep,x'_\ep)}\,dyd\tau\\
    &\leq \eta'- \partial_u f_{1, \ep} (t'_\ep,x'_\ep,0),
\end{align*}
where we used \eqref{eq:2.17} for the first inequality, \eqref{eq:2.18} for the next equality, the fact that $w^\ep - \phi_\ep$ has local max at $(t'_\ep,x'_\ep)$ for the next inequality, \eqref{eq:2.19} for the third inequality, \eqref{LinearModel3} for the next equality, and \eqref{eq:2.20} for the final inequality.  

Since $\partial_u f_{1, \ep} (t'_\ep,x'_\ep,0)=\partial_u f_1\left(\frac{t'_\ep}{\ep},\frac{x'_\ep}{\ep},0\right)$, the above chain of inequalities implies
$$
\left[-\eta' + \underline{R}_2\left(\frac{x_0}{t_0}\right)\right](1-\eta') + \partial_u f_1\left(\frac{t'_\ep}{\ep},\frac{x'_\ep}{\ep},0\right) - \eta' \leq o(1).
$$
Letting $\ep \to 0$, we have
$$
\left[-\eta' + \underline{R}_2\left(\frac{x_0}{t_0}\right)\right](1-\eta')+ \underline{R}_1\left(\frac{x_0}{t_0}\right) - \eta' \leq 0,
$$
which is in contradiction with 
the choice of $\eta'$ in \eqref{eq:2.16}. This completes the proof of the second part of \eqref{eq:1.5}.
\end{proof}

\begin{proof}[Proof of Theorem \ref{thm:1.3b}]
Let $u(t,x)$ be a solution of \eqref{Model} with initial data satisfying (IC$_\infty$). Suppose that (H1)-(H5), and one of (H6), (H6$'$) or (H6$''$) hold. 
Then Proposition \ref{cp:rho} is again applicable. One can then repeat the proof of Theorem \ref{thm:1.3}.
\end{proof}

\section{Applications}\label{sec:app}
Let $u$ be a solution to \eqref{Model} with initial data $\phi$ satisfying (${IC}_\mu$) for some $\mu \in (0,\infty]$. By Theorem \ref{thm:1.3} or Theorem \ref{thm:1.3b}, the (rightward) spreading speed of problem \eqref{Model} is given by the number $\hat{s}^\mu$, which is characterized in \eqref{eq:snlp} as a free-boundary point of certain first order Hamilton-Jacobi equation. In the following, we give the explicit formula for $\hat{s}^\mu$ in two class of environments: the first one being the asymptotically homogeneous environments (see \eqref{eq:R}), the second one being the environments with a single shifting speed (see \eqref{eq:onesup}).

\subsection{Asymptotically homogeneous environments}

We derive the exact spreading speed for asymptotically homogeneous environments, that is, the hypotheses of Theorem \ref{thm:nonkpp1} and, in particular, 
the assumption \eqref{eq:R} are enforced. When $\mu = \infty$ and $f_i$ are independent of $t$ and $x$, the problem was considered in \cite{XuXiao_2016}.

\begin{proof}[Proof of Theorem \ref{thm:nonkpp1}] 
Recall that $\Delta(\lambda,p)$ is given in \eqref{eq:lambdap} and that $\lambda(p)$ is the function that is implicitly defined by $\Delta(\lambda,p) = 0.$ First, 
we observe that $\lambda(p)$ is well-defined, 
since for each fixed $p$,  we have
$\Delta(\lambda, p) \to \mp \infty$ as $\lambda \to \pm \infty$, and that 
$\partial_\lambda \Delta(\lambda,p) \leq -1$ (so that $\lambda\mapsto \Delta(\lambda,p)$ is strictly  decreasing). 

Second, observe that $p \mapsto \lambda(p)$ is even, and strictly convex, i.e.  $\lambda''(p)>0$. Indeed, $\lambda(p)$ is even, since $p \mapsto \Delta(\lambda,p)$ is even.
Furthermore, differentiating the relation $\Delta(\lambda(p),p)=0$ gives
$$
-\partial_\lambda\Delta(\lambda,p) \cdot \lambda'' = \partial_{\lambda\lambda}\Delta(\lambda,p) |\lambda'|^2 + \partial_{pp}\Delta(\lambda,p).
$$
Since $\partial_\lambda\Delta <0$, $\partial_{\lambda\lambda}\Delta \geq 0$ and $\partial_{pp}\Delta>0$, we deduce $\lambda''(p)>0$.

Next, observe that $\lambda(p)/p$ is unbounded as $p \to \infty$. Indeed, using $\Delta(\lambda(p),p)\equiv 0$ and \eqref{eq:lambdap},
$$
\frac{\lambda(p)}{p}=\frac{\lambda(p)+\Delta(\lambda(p),p)}{p} \geq p + \frac{r_1}{p}  \to +\infty \quad \text{ as }p \to \infty.
$$
By evenness, $\lambda(p)/p \to -\infty$ as $p \to -\infty$. 
Recalling that $\lambda'' >0$,  we see that $\lambda': \mathbb{R}\to \mathbb{R}$ is a homeomorphism.  We denote the inverse function of $\lambda'$ to be  $\Psi: \mathbb{R} \to \mathbb{R}$.

Next, observe that there exists a unique positive number $\mu^*$ such that
\begin{equation}\label{qq:3.0}
    p \mapsto \frac{\lambda(p)}{p} \quad \text{ is decreasing in }[0, \mu^*) \text{ and increasing in }(\mu^*,\infty).
\end{equation} 
In particular
\begin{equation}\label{qq:3.1}
    \frac{\lambda(p)}{p} \geq \frac{\lambda(\mu^*)}{\mu^*} \quad \text{ for all }p \geq 0, \quad \text{ and equality holds iff }p= \mu^*.
\end{equation} 
Indeed, let $h(p) = \lambda(p)/p$, then since $\lambda''>0$, it is not difficult to show that
$$
 h'(p_0) = 0  \quad \text{for some $p_0$ implies } \quad h''(p_0) >0.
$$
Observing also that $h(0+)=-\infty$ and $h(+\infty) = \infty$ (since $\lambda'(0) = 0 < \lambda(0)$), we deduce that  $p \mapsto h(p)$ attains its global minimum at a positive number $\mu^*$, and that
\begin{equation}\label{qq:3.2}
 h'(p) <0 \quad \text{ in }  (0,\mu^*),\quad  h'(\mu^*) = 0  \quad \text{ and }\quad h'(p) >0 \quad \text{ in }  (\mu^*,\infty).
\end{equation}
This proves \eqref{qq:3.0}.

Next, we considering the following three cases separately: 
$$
{\rm(i)} \quad \mu \in (0,\mu^*], \quad {\rm(ii)} \quad \mu \in (\mu^*,\infty),\quad \text{ and }{\rm(iii)} \quad \mu = \infty.
$$
Case (i). Let $\mu \in (0,\mu^*]$ be given, and 
define the function
\begin{equation}\label{eq:Hrho2'}
\rho_\mu(s):=\max\{\mu s-\lambda(\mu),0 \} \text{ for }s\in\R_+.
\end{equation}
Then $\rho_\mu$ satisfies \eqref{eq:hjbc} and is a classical solution of  
 \begin{equation}\label{eq:Hrho3}
 \min\left\{\rho,\rho-s\rho'+|\rho'|^2+r_1+r_2\int_{0}^{\tau_0}\int_{\R }\Gamma(\tau, y)e^{(\rho-s\rho')\tau +\rho' y} dy d\tau \right\}=0 
 \end{equation}
in  $[0,\infty) \setminus \{\lambda(\mu)/\mu\}$. Since $\rho_\mu'((\lambda(\mu)/\mu) -) =0 < \rho'_\mu((\lambda(\mu)/\mu)+)=\mu$, we see that $\rho_\mu$ is automatically a viscosity sub-solution of \eqref{eq:Hrho3}
in $(0,\infty)$ (since $\rho_\mu - \phi$ can never attain a strict maximum at $s = \lambda(\mu)/\mu$). To show that it is a viscosity super-solution, suppose $\rho_\mu - \phi$ attains a strict local minimum at $s_0\in (0,\infty)$. If $s_0 \neq \lambda(\mu)/\mu$, then $\rho_\mu$ is differentiable and $\phi'(s_0) = \rho'_\mu(s_0)$ and there is nothing to prove. If $s_0 = \lambda(\mu)/\mu$, then, denoting $\phi'=\phi'(s_0)$, we have $0 \leq \phi' \leq \mu$, and that 
\begin{align*}
   & -(\lambda(\mu)/\mu)\phi'+ |\phi'|^2  - g'(0) + f'(0) \int_0^{\tau_0}\int_\mathbb{R} \Gamma(\tau,y) e^{-(\lambda(\mu)/\mu)\phi'\tau + \phi' y } \,dyd\tau\\
    &= \Delta \left(\frac{\lambda(\mu)}{\mu} \phi', \phi' \right) - \Delta(\lambda(\phi'),\phi')\geq 0,
\end{align*}
where we used the fact that $\lambda \mapsto \Delta(\lambda,p)$ is decreasing in $\lambda$, and that $\frac{\lambda(\mu)}{\mu} \phi' \leq \lambda(\phi')$ (which is due to $0 \leq \phi \leq \mu \leq \mu^*$, and \eqref{qq:3.0}). Hence $\rho_\mu$ is a viscosity solution of \eqref{eq:Hrho3} such that \eqref{eq:hjbc} holds. By the uniqueness result of Proposition \ref{prop:1.2}, we deduce that $\hat\rho^\mu(s) = \rho_\mu(s)$, and hence $\hat{s}^\mu = \lambda(\mu)/\mu$ when $\mu \in (0,\mu^*]$.

Case (ii). Let $\mu \in [\mu^*,\infty)$ be given, define
\begin{equation}\label{eq:Hrho1}
\rho_\mu(s):=\begin{cases}
\mu s-\lambda(\mu) &\text{ for } s\in[\lambda'(\mu),\infty),\\
s\Psi(s)-\lambda(\Psi(s)) & \text{ for } s\in[\lambda'(\mu^*),\lambda'(\mu)),\\
0 & \text{ for } s\in[0,\lambda'(\mu^*)).
\end{cases}
\end{equation}
It is straightforward to check that $\rho_\mu$ is a classical solution of \eqref{eq:Hrho3} in $\mathbb{R} \setminus\{\lambda'(\mu^*)\}.$ 
Using the fact that $\Psi = (\lambda')^{-1}$, it is straightforward to observe that $\rho_\mu$ is continuous at $\lambda'(\mu^*)$ and is differentiable and a classical solution of \eqref{eq:Hrho3} in $[0,\infty) \setminus \{\lambda'(\mu^*)\}$.


Similar as before, $\rho_\mu$ is a viscosity sub-solution of \eqref{eq:Hrho3} in $(0,\infty)$. To show that it is a super-solution as well, it remains to consider the case when $\rho_\mu - \phi$ attains a strict local minimum point at $\lambda'(\mu^*)$. In such an event, $\phi' = \phi'(\lambda'(\mu^*))$ is nonnegative.
Now, at the point $s = \lambda'(\mu^*)$, 
\begin{align*}
   & -\lambda'(\mu^*)\phi'+ |\phi'|^2  - g'(0) + f'(0) \int_0^{\tau_0}\int_\mathbb{R} \Gamma(\tau,y) e^{-\lambda'(\mu^*)\phi'\tau + \phi' y } \,dyd\tau\\
   &\quad = \Delta \left( \lambda'(\mu^*)\phi',\phi'\right) = \Delta \left(\frac{\lambda(\mu^*)}{\mu^*} \phi', \phi' \right) - \Delta(\lambda(\phi'),\phi')\geq 0,
\end{align*}
where we used $\lambda'(\mu^*) = \lambda(\mu^*)/\mu^*$ and $\Delta(\lambda(\phi'),\phi') =0$ in the second equality; \eqref{qq:3.1} and the monotonicity of $\lambda \mapsto \Delta(\lambda,p)$ for the last inequality. By the uniqueness proved in Proposition \ref{prop:1.2}(a), we deduce that the unique viscosity solution of  \eqref{eq:Hrho3} is given by \eqref{eq:Hrho1}. Hence, 
\begin{equation}\label{eq:speedd}
\hat{s}^\mu = \lambda'(\mu^*) = \lambda(\mu^*)/\mu^*= \inf\limits_{p >0} \frac{\lambda(p)}{p}.\end{equation}

Case (iii). Let $\mu \to \infty$ in \eqref{eq:Hrho1}, then the sequence of viscosity solutions $\rho_\mu$ converges, i.e.
$$
\rho_\mu(s) \to \rho_\infty(s):= \max\{ 0, s \Psi(s) - \lambda(\Psi(s)\} \quad \text{ in }C_{loc}([0,\infty)).
$$
By stability property of viscosity solution \cite[Theorem 6.2]{Barles2013}, $\rho_\infty$ is a viscosity solution of \eqref{eq:Hrho3} in $(0,\infty)$. We claim that $\rho_\infty$ satisties \eqref{eq:hjbc} for $\mu=\infty$. Indeed, $\rho_\infty(0) = 0$ and 
$$
\lim\limits_{s \to \infty} \frac{\rho_\infty(s)}{s} \geq \lim\limits_{s \to \infty} \frac{\rho_{\mu}(s)}{s} = \mu \quad \text{ for each }\mu \in [\mu^*,\infty).
$$
Letting $\mu \to \infty$, we verified \eqref{eq:hjbc}. Hence, by the uniqueness result of Proposition \ref{prop:1.2}(b), we conclude that $\rho_\infty$ gives the unique viscosity solution of \eqref{eq:Hrho3}, and thus \eqref{eq:speedd} is valid for $\mu = \infty$ as well. This completes the proof of Theorem \ref{thm:nonkpp1}.
\end{proof}

\subsection{Positive habitat with a single shift}
In this section, we consider environments with a single shifting speed, i.e. 
the hypotheses of Theorem \ref{thm:4} and, in particular, 
the assumption \eqref{eq:onesup} are enforced.


%
For $\mu \in \mathbb{R}$, recall that $\lambda_{-}(\mu)$ and $\lambda_+(\mu)$ are defined by the implicit formula
$$
0 = \Delta_{\pm}(\lambda,\mu) = -\lambda + \mu^2 +R_{1,\pm}+ R_{2,\pm}\int_0^{\tau_0} \int_\mathbb{R} \Gamma(t,y) e^{\mu y - \lambda t}\,dyd\tau. 
$$
Then \eqref{rhoeq1''} can be re-written as 
\begin{equation}\label{rhoeq1'''}
\max\{\rho, \tilde H(s,\rho,\rho') \}=0 \quad \text{ in }(0,\infty)\quad \text{ where }\quad 
\tilde H(s,\rho,\rho') = \begin{cases}
\Delta_-(\rho - s \rho', \rho')  \quad \text{ for }s \leq c_1,\\
\Delta_+(\rho - s \rho', \rho')  \quad \text{ for }s > c_1.\\
\end{cases}
\end{equation}

Since $\lambda_{-}$ (resp. $\lambda_+$) are coercive and strictly convex, $\lambda_-$ (resp. inverse of $\lambda_+$) is a homeomorphism of $\mathbb{R}$ and 
we can similarly define $\Psi_{-}$ (resp. $\Psi_+$) to be the inverse of $\lambda_-$ (resp. inverse of $\lambda_+$).
%
Next, define $(c^*_-,\mu^*_-)$ and $(c^*_+,\mu^*_+)$ by
$$
0<c^*_{\pm} =  \inf_{\mu >0} \frac{\lambda_{\pm}(\mu)}{\mu} = \frac{\lambda_1(\mu^*_\pm)}{\mu^*_\pm}. 
$$
Since $R_{1,-}+R_{2,-}<R_{1,+}+R_{2,+}$,  $\partial_{\lambda}\Delta_{\pm}(\lambda,\mu)<0$ and 
$\Delta_{+}(\lambda_-(\mu),\mu)>\Delta_-(\lambda_-(\mu),\mu)=0$, we see that $\lambda_-(\mu)<\lambda_+(\mu)$ for each $\mu>0$. It then follows that $c^*_-<c^*_+$ and for any $c_1>0$ \begin{equation}\label{eq:rho_c1} 
c_1\Psi_-(c_1)-\lambda_-(\Psi_-(c_1))=\max_{\mu>0}\{c_1\mu-\lambda_-(\mu)\}>\max_{\mu>0}\{c_1\mu-\lambda_+(\mu)\}=c_1\Psi_+(c_1)-\lambda_+(\Psi_+(c_1)).
\end{equation}
Moreover, if $c_1>c^*_+$, then 
\begin{equation}\label{eq:rho_c1'}
   c_1 \Psi_+(c_1)-\lambda_+(\Psi_+(c_1))\ge c_1\mu^*_+-\lambda_+(\mu^*_+)>c^*_+\mu^*_+-\lambda_+(\mu^*_+)=0. 
\end{equation}

In the remainder, we divide the proof of Theorem \ref{thm:4} into the following lemmas.
\begin{lemma}\label{thm:4(i)}
     If $\mu \in (0,\mu^*_+]$, then
 	\begin{equation}
\hat 	s^\mu(c_1)=\begin{cases}
\lambda_+(\mu)/\mu &\text{ if  } c_1\le \lambda_+(\mu)/\mu,\\
	\lambda_-(\underline p(c_1,\mu))/\underline p(c_1,\mu) & \text{ if } c_1>  \lambda_+(\mu)/\mu\text{ and } \underline p(c_1,\mu) <\mu^*_- \\
	c^*_-,& \text{otherwise}
	\end{cases}
	\end{equation}   
where    $\underline{p}(c_1,\mu)$ is the smallest root of
    \begin{equation}\label{eq:up}
    c_1p-\lambda_-(p)=c_1\mu-\lambda_+(\mu).
    \end{equation}
\end{lemma}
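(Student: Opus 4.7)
I will construct, in each of the three subcases of the stated formula, an explicit candidate $\hat\rho^\mu\in C([0,\infty))$, verify it is a viscosity solution of \eqref{rhoeq1''} satisfying $\hat\rho^\mu(0)=0$ and $\lim_{s\to\infty}\hat\rho^\mu(s)/s=\mu$, and invoke Proposition \ref{prop:1.2}(a) to identify it as the unique such solution. The spreading speed $\hat s^\mu=\sup\{s\ge 0:\hat\rho^\mu(s)=0\}$ is then immediate by inspection.

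\textbf{The three ansatze.} If $c_1\le\lambda_+(\mu)/\mu$, take $\hat\rho^\mu(s)=\max\{\mu s-\lambda_+(\mu),\,0\}$; its support $[\lambda_+(\mu)/\mu,\infty)$ is contained in $(c_1,\infty)$, so only $\Delta_+$ is active where $\hat\rho^\mu>0$. If $c_1>\lambda_+(\mu)/\mu$ and $\underline p:=\underline p(c_1,\mu)<\mu^*_-$, glue two affine branches,
\[
\hat\rho^\mu(s)=\begin{cases}\max\{\underline p\,s-\lambda_-(\underline p),\,0\}, & 0\le s\le c_1,\\ \mu s-\lambda_+(\mu), & s\ge c_1,\end{cases}
\]
with continuity at $s=c_1$ being precisely \eqref{eq:up}. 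In the remaining case, $\underline p\ge\mu^*_-$, a single affine branch on $[0,c_1]$ no longer yields a viscosity super-solution near $c^*_-$, so I traverse the Legendre branch via $\Psi_-$:
\[
\hat\rho^\mu(s)=\begin{cases}0, & 0\le s\le c^*_-,\\ s\Psi_-(s)-\lambda_-(\Psi_-(s)), & c^*_-\le s\le\lambda_-'(\underline p),\\ \underline p\,s-\lambda_-(\underline p), & \lambda_-'(\underline p)\le s\le c_1,\\ \mu s-\lambda_+(\mu), & s\ge c_1.\end{cases}
\]
This is $C^1$ at $s=\lambda_-'(\underline p)$ because $\Psi_-(\lambda_-'(\underline p))=\underline p$, and the required inequalities $c^*_-\le\lambda_-'(\underline p)\le c_1$ follow from $\mu^*_-\le\underline p\le\Psi_-(c_1)$ (the latter since $\underline p$ is the smaller root of $g_-(p):=c_1 p-\lambda_-(p)=g_+(\mu)$, a concave function with maximizer $\Psi_-(c_1)$).

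\textbf{Verification, main obstacle, and conclusion.} On each open region of its definition, $\hat\rho^\mu$ is a classical solution of the relevant $\Delta_\pm=0$ equation by direct substitution, using $\Delta_\pm(\lambda_\pm(p),p)=0$ and the identity $\rho-s\rho'=-\lambda_\pm(p)$ on affine segments of slope $p$. The heart of the proof is the viscosity check at the convex corners: the free boundary (at $\lambda_+(\mu)/\mu$, $\lambda_-(\underline p)/\underline p$, or $c^*_-$ respectively) and, in cases (b) and (c), the switching point $s=c_1$. Convexity of the corner makes the sub-solution inequality automatic, so only super-solution is delicate. At a free boundary $s_0$ with right-slope $q$ and value $\hat\rho^\mu(s_0)=0$, the super-solution inequality for test slopes $\phi'=p\in[0,q]$ reduces (after the substitution $\lambda=s_0 p$) to $\Delta_\pm(s_0 p,p)\ge 0$, equivalently $s_0\le\lambda_\pm(p)/p$ for all such $p$; this is handled by the monotonicity of $p\mapsto\lambda_\pm(p)/p$ on $(0,\mu^*_\pm]$ established inside the proof of Theorem \ref{thm:nonkpp1}, using $\mu\le\mu^*_+$ in case (a), $\underline p<\mu^*_-$ in case (b), and $c^*_-=\inf_p\lambda_-(p)/p$ in case (c). At $s=c_1$ the ambient Hamiltonian uses the upper-semicontinuous envelopes $R_{i,+}$, and after the analogous substitution the super-solution inequality for test slopes $p$ in the interval $[\underline p,\mu]$ becomes $g_+(p)\le g_+(\mu)$ with $g_+(p):=c_1p-\lambda_+(p)$; since $c_1>\lambda_+(\mu)/\mu\ge c^*_+$, the maximizer $\Psi_+(c_1)$ of the concave $g_+$ lies to the right of $\mu^*_+\ge\mu$, so $g_+$ is increasing on $[0,\mu]$ and the inequality holds. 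The corner at $s=c_1$ is upward ($\underline p<\mu$) because $g_-(\mu)=c_1\mu-\lambda_-(\mu)>c_1\mu-\lambda_+(\mu)=g_-(\underline p)$ places $\mu$ strictly between the two roots of $g_-(\cdot)=g_+(\mu)$. With the viscosity property confirmed and the boundary conditions $\hat\rho^\mu(0)=0$, $\lim_{s\to\infty}\hat\rho^\mu(s)/s=\mu$ evident from the formulas, Proposition \ref{prop:1.2}(a) identifies $\hat\rho^\mu$ as the unique viscosity solution and yields the claimed formula for $\hat s^\mu$.
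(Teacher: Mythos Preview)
Your proposal is correct and follows essentially the same approach as the paper. The paper postpones the proof of this lemma until after Lemma~\ref{lem:muL} and then gives only a sketch, pointing to Case~(i) of Theorem~\ref{thm:nonkpp1} for your case~(a), to Case~(iii) of Lemma~\ref{lem:muL} for your case~(b), and to Case~(v) of Lemma~\ref{lem:muL} for your case~(c); your three explicit candidates are exactly the formulas appearing in those cases, and your verification at the convex corners (free boundary and $s=c_1$) is the same computation the paper carries out, namely reducing the super-solution inequality to $\Delta_\pm(s_0 p,p)\ge 0$ via monotonicity of $\Delta_\pm$ in its first argument and then invoking the monotonicity of $p\mapsto\lambda_\pm(p)/p$ or of $p\mapsto c_1p-\lambda_+(p)$ on the relevant interval.
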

We will postpone and sketch the proof once after the more delicate Lemma \ref{lem:muL} is established.
\begin{lemma}\label{lem:muL}
     If $\mu \in (\mu^*_+, \infty)$, then
 	\begin{equation}
\hat 	s^\mu(c_1)=\begin{cases}
c^*_+ &\text{ if  } c_1\le \lambda'_+(\mu^*_+),\\
	\lambda_-(\bar p(c_1))/\bar p(c_1) & \text{ if } \lambda'_+(\mu^*_+)<c_1\le \lambda'_+(\mu)  \text{ and } \bar p(c_1)< \mu^*_-,\\
	\lambda_-(\underline p)/\underline p & \text{ if } c_1>  \lambda'_+(\mu)\text{ and } \underline p(c_1,\mu) <\mu^*_-, \\
	c^*_-,& \text{otherwise}.
	\end{cases}
	\end{equation}  
		where $	c^*_+ = \lambda'_+(\mu^*_+) =\lambda_+(\mu^*_+)/\mu^*_+ $,  $\underline p(c_1,\mu)$ is the smallest root of \eqref{eq:up} and $\bar p(c_1)$ is the smallest root of 
	\begin{equation}\label{eq:op}
	c_1p-\lambda_-(p)=c_1\Psi_+(c_1)-\lambda_+(\Psi_+(c_1))
	\end{equation}
\end{lemma}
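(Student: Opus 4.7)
The plan is to mirror the construction in case (ii) of Theorem \ref{thm:nonkpp1}. For each of the four regimes in the statement I will build an explicit, piecewise-$C^1$ candidate $\rho_\mu\in C([0,\infty))$ satisfying $\rho_\mu(0)=0$ and $\rho_\mu(s)/s\to\mu$, verify that it is a viscosity solution of \eqref{rhoeq1''} (equivalently of \eqref{rhoeq1'''}), and invoke Proposition \ref{prop:1.2} to conclude $\hat\rho^\mu\equiv\rho_\mu$; the free boundary $\hat s^\mu$ is then read off from \eqref{eq:snlp}.

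The building blocks are familiar from the proof of Theorem \ref{thm:nonkpp1}: the zero function, the affine classical solutions $s\mapsto ps-\lambda_\pm(p)$ of $\Delta_\pm=0$, and the envelopes $s\mapsto s\Psi_\pm(s)-\lambda_\pm(\Psi_\pm(s))$. On the right interval $(c_1,\infty)$ I reuse the right-sided homogeneous profile: if $c_1\le c^*_+$, the entire three-piece profile from case (ii) of Theorem \ref{thm:nonkpp1} (zero, $\Psi_+$-fan, line $\mu s-\lambda_+(\mu)$) survives and gives $\hat s^\mu=c^*_+$ directly; if $c^*_+<c_1\le\lambda'_+(\mu)$ only the $\Psi_+$-fan starting at $c_1$ together with the affine tail remains; and if $c_1>\lambda'_+(\mu)$ only the affine tail $\mu s-\lambda_+(\mu)$ remains. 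On the left interval $(0,c_1)$, when the smallest root $\bar p(c_1)$ (resp.\ $\underline p(c_1,\mu)$) of \eqref{eq:op} (resp.\ \eqref{eq:up}) is strictly less than $\mu^*_-$, I attach a single affine $\Delta_-$-piece $s\mapsto\bar p s-\lambda_-(\bar p)$ (resp.\ $\underline p s-\lambda_-(\underline p)$) preceded by a zero piece on $[0,\lambda_-(\bar p)/\bar p]$ (resp.\ $[0,\lambda_-(\underline p)/\underline p]$); continuity at $c_1$ is precisely the defining equation of $\bar p$ or $\underline p$. When that root is $\ge\mu^*_-$, the left segment must instead be replaced by the homogeneous-left construction (zero on $[0,c^*_-]$, $\Psi_-$-envelope, then an affine $\Delta_-$-piece) engineered to meet $(c_1,C)$ with $C=c_1\Psi_+(c_1)-\lambda_+(\Psi_+(c_1))$ or $c_1\mu-\lambda_+(\mu)$, yielding $\hat s^\mu=c^*_-$.

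Viscosity verification is classical on the interior of each piece, so the real work is at the corner points $s=\hat s^\mu$ and $s=c_1$. Both are upward slope jumps, i.e.\ convex corners, so the sub-solution tests are automatic. At $s=\hat s^\mu$ the super-solution test reduces to $\Delta_-(\hat s^\mu p, p)\ge 0$ for every $p$ in the subdifferential, which is equivalent to $\lambda_-(p)/p\ge\hat s^\mu$ for the same $p$; by the convexity-of-ratio property of $\lambda_-$ this holds precisely when the right-slope at $\hat s^\mu$ stays $\le \mu^*_-$---exactly the admissibility hypothesis $\bar p<\mu^*_-$ or $\underline p<\mu^*_-$ in the first three cases, and the reason why the ``otherwise'' case must switch to the $\Psi_-$-envelope construction. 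At $s=c_1$ the convexity of the corner, namely $\bar p\le\Psi_+(c_1)$ (resp.\ $\underline p\le\mu$), follows from the concavity of $p\mapsto c_1 p-\lambda_-(p)$ together with \eqref{eq:rho_c1}, which itself is a consequence of $\lambda_-<\lambda_+$; then the super-solution test at $c_1$, using $R^*_i(c_1)=R_{i,+}$, becomes $\rho_\mu(c_1)\ge c_1 p-\lambda_+(p)$ for $p$ in the subdifferential, which holds because the right-hand side is a concave function of $p$ maximized exactly at $\Psi_+(c_1)$ (resp.\ at $\mu$) with maximum value $C$. The main obstacle I anticipate is not any single estimate but the bookkeeping across the four regimes: ensuring that the matching slope, the continuity equation at $c_1$, and the admissibility test $<\mu^*_-$ interlock consistently, and confirming via \eqref{eq:rho_c1}--\eqref{eq:rho_c1'} that the ``otherwise'' case genuinely collapses to the homogeneous-left construction with spreading speed $c^*_-$.
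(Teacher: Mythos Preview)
Your proposal is correct and follows essentially the same approach as the paper: build explicit piecewise-$C^1$ candidates from the zero piece, the affine $\Delta_\pm$-pieces, and the $\Psi_\pm$-envelopes, then verify the viscosity conditions only at the convex corners $s=\hat s^\mu$ and $s=c_1$, with the admissibility test $\bar p<\mu^*_-$ (resp.\ $\underline p<\mu^*_-$) deciding whether the left segment is a single affine piece or the full $\Psi_-$-envelope construction giving $c^*_-$. The paper organizes the argument into five mutually exclusive cases and prefaces the construction with a short monotonicity analysis of $\bar p(c_1)$ and $\underline p(c_1,\mu)$ (via the implicit functions $F_1,F_2$), which you will need to justify the ordering $\bar p<\Psi_+(c_1)$ and $\underline p<\mu$ underlying the convexity of the corner at $c_1$; otherwise your outline matches the paper's proof in all essential respects.
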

Finally, letting $\mu \to \infty$ in the above lemma, we have
\begin{lemma}\label{thm:4(iii)}
     		\begin{equation}\label{eq:onesnlp}
	\hat s^\infty(c_1)=\begin{cases} 
	c^*_+ & \text{ if } c_1\le c^*_+,\\
	\lambda_-(\bar p(c_1))/\bar p(c_1) & \text{ if } c_1\in(c^*_+,\bar c_1),\\
	c^*_-& \text{ if } c_1\ge\bar c_1.
	\end{cases}
	\end{equation}
	where $	c^*_+ = \lambda'_+(\mu^*_+) =\lambda_+(\mu^*_+)/\mu^*_+ $ and 
	$\bar c_1$ is the  unique positive number such that $\bar{p}(\bar c_1)=\mu^*_-$. 
\end{lemma}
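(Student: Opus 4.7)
My plan is to obtain \eqref{eq:onesnlp} by passing to the limit $\mu\to\infty$ in the formula of Lemma \ref{lem:muL}, using stability of viscosity solutions together with the uniqueness statement in Proposition \ref{prop:1.2}(b). For each $\mu\in(\mu^*_+,\infty)$, let $\hat\rho^\mu$ be the unique viscosity solution of \eqref{rhoeq1''} provided by Proposition \ref{prop:1.2} with $\lim_{s\to\infty}\hat\rho^\mu(s)/s=\mu$, and whose free boundary is $\hat s^\mu(c_1)$. By comparison (Propositions \ref{cp:rho1}, \ref{cp:rho}), the family $\{\hat\rho^\mu\}$ is nondecreasing in $\mu$, and by the Lipschitz estimate in the proof of Lemma \ref{lem:2.3} it is locally uniformly bounded. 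Hence $\hat\rho^\mu$ converges locally uniformly as $\mu\to\infty$ to some nondecreasing limit $\rho_\infty$, which is a viscosity solution of \eqref{rhoeq1''} by stability (\cite[Theorem 6.2]{Barles2013}) and satisfies $\rho_\infty(0)=0$ and $\lim_{s\to\infty}\rho_\infty(s)/s=\infty$ (the latter using that $\liminf_{s\to\infty}\rho_\infty(s)/s\ge\liminf_{s\to\infty}\hat\rho^\mu(s)/s=\mu$ for every $\mu$). By Proposition \ref{prop:1.2}(b), $\rho_\infty$ must be $\hat\rho^\infty$, and consequently $\hat s^\infty(c_1)=\lim_{\mu\to\infty}\hat s^\mu(c_1)$.

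Next, I pass to the limit inside the explicit cases of Lemma \ref{lem:muL}. The key observation is that by the strict convexity and coercivity of $\lambda_+$ established in the proof of Theorem \ref{thm:nonkpp1}, one has $\lambda'_+(\mu)\to+\infty$ as $\mu\to\infty$. Therefore, for every fixed $c_1>0$, the case ``$c_1>\lambda'_+(\mu)$'' ceases to occur once $\mu$ is large enough, eliminating the third branch of Lemma \ref{lem:muL}. The remaining three possibilities reduce to: (i) $c_1\le c^*_+$, giving $\hat s^\mu(c_1)=c^*_+$; (ii) $c_1>c^*_+$ and $\bar p(c_1)<\mu^*_-$, giving $\hat s^\mu(c_1)=\lambda_-(\bar p(c_1))/\bar p(c_1)$; (iii) $c_1>c^*_+$ and $\bar p(c_1)\ge\mu^*_-$, giving $\hat s^\mu(c_1)=c^*_-$. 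Since $\bar p(c_1)$ is independent of $\mu$, these values stabilize and define $\hat s^\infty(c_1)$.

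It remains to rewrite the trichotomy in terms of a single threshold $\bar c_1$. For this, I would study the function $c_1\mapsto\bar p(c_1)$, defined as the smallest root of \eqref{eq:op}. Set $g_-(p):=c_1p-\lambda_-(p)$ and $h(c_1):=c_1\Psi_+(c_1)-\lambda_+(\Psi_+(c_1))=\max_{p}\{c_1p-\lambda_+(p)\}$, so \eqref{eq:op} reads $g_-(p)=h(c_1)$. For $c_1>c^*_+$, \eqref{eq:rho_c1'} gives $h(c_1)>0$, while \eqref{eq:rho_c1} gives $h(c_1)<\max_p g_-(p)=g_-(\Psi_-(c_1))$, so the equation admits a smallest root $\bar p(c_1)\in(0,\Psi_-(c_1))$ lying strictly below the argmax of $g_-$. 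On this branch the implicit function theorem (applicable since $g_-'(\bar p)=c_1-\lambda_-'(\bar p)>0$) shows $\bar p(c_1)$ is continuous, and a direct envelope computation shows $\frac{d}{dc_1}\bar p(c_1)<0$, i.e.\ $\bar p$ is strictly decreasing on $(c^*_+,\infty)$. Checking the endpoint behavior---$\bar p(c_1)\nearrow\mu^*_+$ as $c_1\searrow c^*_+$ (at which point $\lambda_-(\bar p)/\bar p> c^*_+$ by $R_{i,-}\le R_{i,+}$), and $\bar p(c_1)\searrow 0$ as $c_1\to\infty$---guarantees a unique $\bar c_1\in(c^*_+,\infty)$ with $\bar p(\bar c_1)=\mu^*_-$. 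The trichotomy $\bar p(c_1)<\mu^*_-$ iff $c_1>\bar c_1$ then delivers \eqref{eq:onesnlp}, and at $c_1=\bar c_1$ both formulas $\lambda_-(\bar p(c_1))/\bar p(c_1)$ and $c^*_-$ agree, ensuring continuity in $c_1$.

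The main obstacle I anticipate is the last step: establishing strict monotonicity and the correct endpoint limits of $c_1\mapsto\bar p(c_1)$, and verifying that at $c_1=\bar c_1$ one indeed has $\lambda_-(\mu^*_-)/\mu^*_-=c^*_-$ so the two expressions match. The stability and limit arguments in the first two paragraphs are largely routine once Proposition \ref{prop:1.2}(b) and the convexity/coercivity of $\lambda_+$ are in hand.
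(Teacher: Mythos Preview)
Your overall strategy --- letting $\mu\to\infty$ in Lemma \ref{lem:muL} and invoking stability plus the uniqueness in Proposition \ref{prop:1.2}(b) --- is exactly what the paper does, and your elimination of the branch $c_1>\lambda_+'(\mu)$ (since $\lambda_+'(\mu)\to\infty$) is correct.

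The genuine error is in your analysis of $c_1\mapsto \bar p(c_1)$. You assert that $\bar p$ is strictly \emph{decreasing} with $\bar p(c_1)\nearrow\mu^*_+$ as $c_1\searrow c^*_+$ and $\bar p(c_1)\searrow 0$ as $c_1\to\infty$. All three claims are false. Writing $F_2(c_1,p)=c_1p-\lambda_-(p)-h(c_1)$ with $h(c_1)=c_1\Psi_+(c_1)-\lambda_+(\Psi_+(c_1))$, the envelope theorem gives $h'(c_1)=\Psi_+(c_1)$, so $\partial_{c_1}F_2(c_1,\bar p)=\bar p-\Psi_+(c_1)<0$ (since $\bar p<\Psi_+(c_1)$) while $\partial_pF_2(c_1,\bar p)=c_1-\lambda_-'(\bar p)>0$. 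Implicit differentiation then yields $\frac{d\bar p}{dc_1}>0$: $\bar p$ is \emph{increasing}. At $c_1=c^*_+$ one has $h(c^*_+)=0$, so $\bar p(c^*_+)$ is the smallest positive root of $\lambda_-(p)=c^*_+p$; since $\lambda_-(p)/p$ is decreasing on $(0,\mu^*_-)$ with $\lambda_-(\mu^*_-)/\mu^*_-=c^*_-<c^*_+$, this root lies in $(0,\mu^*_-)$, not at $\mu^*_+$. And $\bar p(c_1)\to+\infty$ as $c_1\to\infty$ (e.g.\ in the KPP case $\bar p=c_1/2-\sqrt{r_2-r_1}$). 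These facts are already established in the paper's proof of Lemma \ref{lem:muL}.

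Consequently, your conclusion ``$\bar p(c_1)<\mu^*_-$ iff $c_1>\bar c_1$'' is exactly backwards: the correct equivalence is $\bar p(c_1)<\mu^*_-$ iff $c_1<\bar c_1$. If one carried your stated monotonicity through honestly, one would obtain the formula with the second and third cases of \eqref{eq:onesnlp} interchanged --- the wrong answer. Once you correct the direction of monotonicity and the endpoint behavior (or simply cite them from the proof of Lemma \ref{lem:muL}), the argument goes through and yields \eqref{eq:onesnlp}.
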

\begin{proof}[Proof of Lemma \ref{lem:muL}]
First, we claim that $\underline{p}(c_1,\mu)<\Psi_-(c_1)\wedge\mu$ and $\bar{p}(c_1)<\Psi_+(c_1)\wedge \Psi_-(c_1)$, and that both are increasing in $c_1\in(0,\infty)$.
	
	Indeed, define the auxiliary functions
	$$
	F_1(c_1,\mu,p)=c_1p-\lambda_-(p)-c_1\mu+\lambda_+(\mu)\quad \text{ and }\quad F_2( c_1,p)=c_1p-\lambda_-(p)-c_1\Psi_+(c_1)+\lambda_+(\Psi_+(c_1)). $$
	Then $F_i$ is increasing in $p\in[0,\Psi_-(c_1)]$ and decreasing in $p\in[\Psi_-(c_1),\infty)$, and
	\begin{align*}
	& F_1(c_1,\mu,\mu)=-\lambda_-(\mu)+\lambda_+(\mu)>0,\quad F_1(c_1,\mu,0)<0,\\
	& F_2(c_1,\Psi_+(c_1))=-\lambda_-(\Psi_+(c_1))+\lambda_+(\Psi_+(c_1))>0,\quad  F_2(c_1,0)<0.
	\end{align*}
	It then follows that the smallest roots $\underline{p}(c_1,\mu)\in(0,\mu)$ and $\bar{p}(c_1)\in(0,\Psi_+(c_1))$. Moreover,
	\begin{align}\label{eq:DF}
&\partial_p F_1(c_1,\mu,\underline p)>0, \quad \partial_{\mu} F_1(c_1,\mu,\underline p)=-c_1+\lambda'_+(\mu),\quad \partial_c F_1(\mu, c_1,\underline{p})=\underline{p}-\mu<0.\\
&\partial_p F_2(c_1,\bar p)>0, \quad \partial_c F_2( c_1,\bar{p})=\bar{p}-\Psi_+(c_1)<0\nonumber
	\end{align}   Therefore, $\underline{p}(c_1,\mu)$ is increasing in $c_1$, and  increasing in $\mu$ provided $c_1>\lambda'_+(\mu)$.  
	$\bar p(c_1)$ is increasing in $c_1$. Since $F_2(c^*_+,\mu^*_-)=\mu^*_-(c^*_+-c^*_-)>0$, we obtain $\bar p(c^*_+)<\mu^*_-$ and $\bar p(+\infty)=+\infty$. There exists a unique number $\bar c_1$ such that $\bar p(\bar c_1)=\mu^*_-.$
	
	Second, if $c_1\ge\lambda'_+(\mu)$, we claim that $\underline{p}(c_1,\mu)=\mu^*_-$ defines a decreasing function $c_1=g(\mu)$ for $\mu^*_-< \mu\le\Psi_+(\bar c_1)$ and $\bar{c}_1=g(\Psi_+(\bar{c}_1))$.
	
	In fact, $c_1=g(\mu)$ solves implicitly from $F_1(c_1,\mu,\mu^*_-)=0$.
	It is decreasing for $\mu\in (\mu^*_-,\Psi_+(\bar c_1]$ due to \eqref{eq:DF}.
	A direct computation gives $F_1(\bar c_1, \Psi_+(\bar{c}_1),\mu^*_-)=F_2(\bar c_1, \mu^*_-)=0$.
		
Next,  we divide the proof into the following five mutually exclusive cases:
(i) $c_1\le \lambda'_+(\mu^*_+)$ ; (ii) $\lambda'_+(\mu^*_+)<c_1\leq \lambda'_+(\mu)$ and $\bar{p}(c_1)<\mu^*_-$; (iii) $c_1>\lambda'_+(\mu)$ and $\underline{p}(c_1,\mu)<\mu^*_-$; (iv) $\lambda'_+(\mu^*_+)<c_1\leq \lambda'_+(\mu)$ and $\bar{p}(c_1)\geq\mu^*_-$; (v) $c_1>\lambda'_+(\mu)$ and $\underline{p}(c_1,\mu)\geq \mu^*_-$.

Case (i). Since $c_1\le c^*_+$, we can directly verify that the formula \eqref{eq:Hrho1} (with $\lambda,\lambda',\Psi$ replaced by $\lambda_+,\lambda'_+,\Psi_+$) defines a viscosity solution of \eqref{rhoeq1''} in $[0,\infty)$ with $R_i$ given by \eqref{eq:onesup}, which satisfies the boundary conditions \eqref{eq:hjbc}. Hence, the spreading speed $\hat{s}^\mu$ coincides with the homogeneous spreading speed $c^*_+$ in the "$+$" environment.

Case (ii). Note that $\lambda'_+(\mu^*_+)<c_1\leq \lambda'_+(\mu)$. Define the function
\begin{equation}\label{eq:caseiii}
\rho_\mu(s):=\begin{cases}
\mu s-\lambda_+(s)&\text{ for } s\in[\lambda'_+(\mu),+\infty),\\
s\Psi_+(s)-\lambda_+(\Psi_+(s)) &\text{ for } s\in[c_1,\lambda'_+(\mu)),\\
\max\{\bar{p}s-\lambda_-(\bar{p}),0\} &\text{ for } s\in[0, c_1).\\
\end{cases}
\end{equation}
Then $\rho_\mu$ satisfies \eqref{eq:hjbc} and is a classical solution of  \eqref{rhoeq1''} with $R_i(s)$ given in \eqref{eq:onesup} in $[0,\infty)$ except $\{c_1,\lambda_-(\bar{p})/\bar{p}\}$. Since $\rho'_{\mu}(c_1-)=\bar{p}<\Psi_+(c_1)=\rho'_{\mu}(c_1+)$ and $\rho'_{\mu}(\lambda_-(\bar p)/\bar p-)=0<\rho'_{\mu}(\lambda_-(\bar p)/\bar p+)=\bar{p}$, we conclude that $\rho_\mu$ is automatically a viscosity sub-solution in $(0,\infty)$.
It suffices to show it is also a viscosity super-solution, it suffices to consider the two points $s=c_1$ and $s=\lambda_-(\bar{p})/\bar{p}$ where the $\rho_\mu$ may not be differentiable. 
Since $\overline p < \mu^*_-$, the latter point can be treated as in case (i) of proof of Theorem  \ref{thm:nonkpp1}. Thus, it suffices to consider the case when $\rho_\mu - \phi$, for some test function $\phi$, attains a strict local minimum at the point $s=c_1$.
Now, at the point $s=c_1$,
\begin{align*}
& \rho_\mu(c_1)-c_1\phi'+|\phi'|^2+R_{1,+}+R_{2,+}\int_{0}^{\tau_0}\int_{\R }\Gamma(\tau, y)e^{( \rho_\mu(c_1)-c_1\phi')\tau+\phi' y} dy d\tau\\
&=\Delta_+(c_1\phi'-c_1\Psi_+(c_1)+\lambda_+(\Psi_+(c_1)), \phi')-\Delta_+(\lambda_+(\phi'),\phi')\ge0,
\end{align*}
holds for $\phi'\in[\bar p,\Psi_+(c_1)]$,
where we used $\Delta_+(\lambda_+(p),p) = 0$ for all $p$ for the first equality, and 
the fact that $\Delta_+(\lambda,p)$ is decreasing in $\lambda$ and that $c_1\phi'-c_1\Psi_+(c_1)+\lambda_+(\Psi_+(c_1)) \leq \lambda_+(\phi')$ (see the last equality of \eqref{eq:rho_c1}) for the last inequality. Hence, $\hat s^\mu=\lambda_-(\bar p)/\bar p$.

Case (iii). Define the function
\begin{equation}
\rho_\mu(s):=\begin{cases}
\mu s-\lambda_+(s) &\text{ for } s\in[c_1,+\infty),\\
\max\{\underline{p}s-\lambda_-(\underline{p}),0\} &\text{ for } s\in[0,c_1).
\end{cases}
\end{equation}
A direction computation gives $\rho_\mu$ is a classical solution of \eqref{rhoeq1''}  except $\{c_1,\lambda_-(\underline{p})/\underline{p}\}$. Similar as above, it is a viscosity sub-solution of \eqref{rhoeq1''} in $(0,\infty)$. To show it is also a super-solution, it suffices to consider the two points $s=c_1$ and $s=\lambda_-(\underline p)/\underline p$ where $\rho_\mu$ is not differentiable. Since $\underline p < \mu^*_-$, the latter point can be treated as in case (i) of proof of Theorem \ref{thm:kpp1}. Thus, it suffices to consider the case when $\rho_\mu - \phi$, for some test function $\phi$, attains a strict local minimum at the point $s=c_1$. At that point, we have 
\begin{align*}
&\rho_\mu(c_1)-c_1\phi'+|\phi'|^2+R_{1,+}+R_{2,+}\int_{0}^{\tau_0}\int_{\R }\Gamma(\tau, y)e^{(\rho_\mu(c_1)-c_1\phi')\tau+\phi' y} dy d\tau\\
&=\Delta_+(c_1\phi'-c_1\mu+\lambda_+(\mu), \phi')-\Delta_+(\lambda_+(\phi'),\phi')\ge0,
\end{align*}
holds for $\phi'(c_1):=\phi'\in[\underline{p},\mu]$, 
where we used the fact that $\Delta_+(\lambda,p)$ is decreasing in $\lambda$, and $c_1 s-\lambda_+(s)$ is increasing on $[0,\mu]$. Hence, $\hat s^\mu=\lambda_-(\underline{p})/\underline{p}$.

Case (iv). 
Denote the function 
\begin{equation}\label{eq:caseiv}
\overline\rho_\mu(s):=\begin{cases}
\mu s-\lambda_+(s)&\text{ for } s\in[\lambda'_+(\mu),+\infty),\\
s\Psi_+(s)-\lambda_+(\Psi_+(s)) &\text{ for } s\in[c_1,\lambda'_+(\mu)),\\
\bar{p}s-\lambda_-(\bar{p}) &\text{ for } s\in[\lambda'_-(\bar p), c_1],\\
s\Psi_-(s)-\lambda_-(\Psi_-(s)) &\text{ for } s\in[c^*_- \lambda'_-(\bar p)],\\
0 &\text{ for } s\in[0, c^*_-].
\end{cases}
\end{equation}
 Similarly as before, we see that \eqref{eq:caseiv} is a viscosity sub-solution. Furthermore, one might directly adapt the argument in Case (i) of Theorem  \ref{thm:nonkpp1} to obtain it is also a viscosity super-solution. Therefore, we concludes $\hat s^\mu=c^*_-$.

Case (v). Now it is standard to check that 
$\rho_\mu\in C([0,\infty)]$ defined by
\begin{equation}\label{eq:caseii}
\rho_\mu(s):=\begin{cases}
\mu s-\lambda_+(s) &\text{ for } s\in[c_1,+\infty),\\
\underline{p}s-\lambda_-(\underline{p}), &\text{ for } s\in[\lambda'_-(\underline{p}),c_1],\\
s\Psi_-(s)-\lambda_-(\Psi_-(s))  &\text{ for } s\in[c^*_-,\lambda'_-(\underline{p})],\\
0&\text{ for } s\in[0,c^*_-].
\end{cases}
\end{equation}
is a unique viscosity solution of \eqref{rhoeq1''}-\eqref{eq:hjbc}. Hence, $\hat s^\mu=c^*_-$.
\end{proof}
\begin{proof}[Sketch proof of Lemma 3.5]
First, we emphasize that if $\mu^*_+\le \mu^*_-$, then $\bar p(c_1,\mu)<\mu^*_-$ is always valid due to $\bar{p}<\mu$, so there will be only two mutually exclusive cases for $\mu\in(0,\mu^*_+)$, namely, 

(i) $c_1\le \lambda_+(\mu)/\mu$;  
(ii) $c_1>  \lambda_+(\mu)/\mu\text{ and } \underline p(c_1,\mu) <\mu^*_-$.

Case (i). It follows from an easy modification of Case (i) in Theorem  \ref{thm:nonkpp1}.

Case (ii). This is already done by Case (iii) in the  proof of Lemma \ref{lem:muL}.

In addition, when $\mu^*_-<\mu^*_+$. 
Case (iii)  $c_1>  \lambda_+(\mu)/\mu\text{ and } \underline p(c_1,\mu) \ge\mu^*_-$ does exist. But then, one could check this by the similar strategy in case (v) in proof of Lemma \ref{lem:muL}.
\end{proof}
\begin{remark}
	If $c_1\le 0$, then the result is the same as that of homogeneous environment.
	When $\mu^*_-\ge\mu^*_+>\mu$, $\frac{\lambda_-(\underline{p}(c_1,\mu))}{\underline{p}(c_1,\mu)}\to \frac{\lambda_-(\mu)}{\mu}$ as $c_1\to \infty$.
\end{remark}

\subsection{Proof of Theorems \ref{thm:kpp1} and \ref{thm:kppp}}

\begin{proof}[Proof of Theorem \ref{thm:kpp1}]
Let $r_0 >0$ be a given positive constant. 
We apply Theorem \ref{thm:nonkpp1} with $f_1(t,x,u)=u(r(t,x)-u)$ and $f_2\equiv 0$. Then (H1)-(H6) are satisfied with
$$\underline{R}_1(s) = r_0 \quad \text{ a.e. in }(0,\infty), \quad 
R_1 \equiv r_0\quad \text{ and } \quad 
R_2 \equiv \underline{R}_2 \equiv 0.  
$$
And \eqref{eq:lambdap} takes the form $\Delta(\lambda,p)=-\lambda + p^2 + r_0 =0$. Hence $\lambda(p) = p^2 + r_0$, and the minimum point $\mu^*$ of $\lambda(p)/p$ is $\sqrt{r_0}$. Hence, the formula \eqref{eq:1.24b} follows from \eqref{eq:R22}. 
\end{proof}

\begin{proof}[Proof of Theorem \ref{thm:kppp}]
We take $f_2\equiv0$ and $R_{1,+}=r_2>R_{1,-}=r_1>0$ in Theorem \ref{thm:4}. Now we see  
\begin{equation}
\lambda_-(p)=p^2+r_1, \lambda_+(p)=p^2+r_2, r_1<r_2
\end{equation}
Moreover,
$c^*_-=2\sqrt{r_1}<c^*_+=2\sqrt{r_2}$ and $\mu^*_-=\sqrt{r_1}<\mu^*_+=\sqrt{r_2}$. First, we derive \eqref{123} and \eqref{1234} from \eqref{eq:1.18}. Note that  $\underline{p}$ is defined by the smallest root of \eqref{eq:up'}, i.e.
$c_1p-p^2-r_1=c_1\mu-\mu^2-r_2,$ then we get
$$\underline{p}=\frac{c_1-\sqrt{(c_1-2\mu)^2+4(r_2-r_1)}}{2}.
$$
When $\mu \leq \sqrt{r_1}$, then $\underline{p} \leq \mu^*_-$ and \eqref{123} follows from the first two alternatives in \eqref{eq:1.18}. 
Note that the third alternative in \eqref{eq:1.18} holds only when $\mu \in (\sqrt{r_1},\sqrt{r_2}]$, when 
$$
c_1 \geq \frac{\mu^2+r_2-2r_1}{\mu-\sqrt{r_1}}=\mu+\sqrt{r_1}+\frac{r_2-r_1}{\mu-\sqrt{r_1}}.
$$
Hence \eqref{123} and \eqref{1234} follows from \eqref{eq:1.18}, by noting that $c^*_- = 2\sqrt{r_1},$ 
\begin{equation}\label{eq:33}
\frac{\lambda_+(\mu)}{\mu} = \mu + \frac{r_2}{\mu}\quad \text{ and }\quad \frac{\lambda_-(\underline p)}{\underline p} = \frac{c_1-\sqrt{(c_1-2\mu)^2+4(r_2-r_1)}}{2} + \frac{2r_1}{c_1-\sqrt{(c_1-2\mu)^2+4(r_2-r_1)}}.
\end{equation}

Next, let $\mu \geq \sqrt{r_2}$. We derive \eqref{12345} and \eqref{12} from \eqref{eq:1.20}. Here we note that $\overline p$ is the smallest root of \eqref{eq:op'}, i.e.  $c_1p-p^2-r_1=\frac{c^2_1}{2}-\frac{c^2_1}{4}-r_2.$ Hence, 
$$\bar p=\frac{c_1}{2}-\sqrt{r_2-r_1}.
$$
And we observe that the second alternative in \eqref{eq:1.20} happens precisely when $\mu \in [\sqrt{r_2}, \sqrt{r_1} + \sqrt{r_2-r_1}]$ and $c_1 \in (2\sqrt{r_2}, 2\sqrt\mu]$. This divides \eqref{eq:1.20} into the two cases \eqref{12345} and \eqref{12}. Note that $c^*_- = 2\sqrt{r_1},$ $c^*_+ = 2\sqrt{r_2}$, $\lambda$, ${\lambda_-(\underline p)}/{\underline p}$ is given in \eqref{eq:33}, and 
$$
\frac{\lambda_-(\bar p)}{\bar p} =\frac{c_1}{2}-\sqrt{r_2-r_1} + \frac{r_1}{\frac{c_1}{2}-\sqrt{r_2-r_1}}.
$$
We omit the details.
\end{proof}

\appendix 
\section{Proof of Proposition \ref{cp:rho1}}\label{sec:A}




In this section, we proof Proposition \ref{cp:rho1} by applying the comparison result in \cite[Theorem A.1]{LLL_2020b}, which was inspired by the arguments developed by Ishii \cite{Ishii1997} and Tourin \cite{Tourin1992}. Consider the following Hamilton-Jacobi equation:
\begin{equation}\label{eq:hj1}
\min\{w_t+\tilde{H}(t,x,D_x w), w-Lt\}=0 \quad \text{in} \,\,\Omega \quad \text{ for some }L \in \mathbb{R}.
\end{equation}


\begin{definition}\label{def:AA}
We say that a  lower semicontinuous (lsc) function $w$ is  a  viscosity super-solution of \eqref{eq:hj1} if $w-Lt\geq 0$ in $ \Omega$, and for all test functions $\varphi\in C^\infty(\Omega)$, if $(t_0,x_0)\in \Omega$ is a strict local minimum point of $w-\varphi$, then
$$\partial_t\varphi(t_0,x_0)+\tilde{H}^*(t_0,x_0,D_x\varphi(t_0,x_0))\geq 0$$
holds; A upper semicontinuous (usc) function $w$ is a  viscosity sub-solution of \eqref{eq:hj1} if for all test functions $\varphi\in C^\infty(\Omega)$,  if $(t_0,x_0)\in \Omega$  is a strict local maximum point of $w-\varphi$ such that $w(t_0,x_0) - Lt_0>0$, then
$$\partial_t\varphi(t_0,x_0)+\tilde{H}_*(t_0,x_0,D_x\varphi(t_0,x_0))\leq 0$$
holds. Finally, $w$ is a  viscosity solution  of \eqref{eq:hj1} if and only if $w$ is simultaneously a viscosity super-solution and a viscosity sub-solution of \eqref{eq:hj1}.
\end{definition}


Let $\Omega$ be a domain in  $[0,T)\times \mathbb{R}^N$ with some given $T>0$. 
We impose additional assumptions the Hamiltonian $\tilde{H}:\Omega\times\mathbb{R}^N\rightarrow \mathbb{R}$. Namely,
 for each $R>0$ there exists a continuous function $\omega_R: [0,\infty) \to [0,\infty)$ such that $\omega_R(0) =0$ and $\omega_R(r)>0$ for $r>0$, such that the following holds:

\begin{enumerate}
	\item [(A1)] For each $(t_0,x_0) \in \Omega$, $p\mapsto \tilde{H}(t_0,x_0,p)$ is a continuous function from $\mathbb{R}^N$ to $\mathbb{R}$;


	\item[(A2)] For each $R>0$ and  $(t_0,x_0)\in \Omega \cap[ (0,T)\times B_R(0)]$, there exist a constant $\delta_0>0$ and a unit vector $(h_0,k_0)\in \mathbb{R}\times \mathbb{R}^N$ such that
	$$\tilde{H}(s,y,p)-\tilde{H}(t,x,p)\le \omega_R((|x-y|+|t-s|)(1+|p|))$$
	for $t,x,s,y,p$ such that  $p\in \mathbb{R}^{N}$, and 
	\begin{equation}\label{eq:A2}
	0<\|(t,x)-(t_0,x_0)\|+\|(s,y)-(t_0,x_0)\|<\delta_0,\quad \left\|\frac{(t-s,x-y)}{\|(t-s,x-y)\|}-(h_0,k_0) \right\|<\delta_0.
	\end{equation}
	\item [(A3)] There is a constant $M\ge0$ such that for each $\lambda\in[0,1)$ and $x_0\in \mathbb{R}^N$, there exist  constants  $\bar\epsilon(\lambda,x_0)>0$ and $\bar C(\lambda,x_0)>0$ such that 
	%
	$$\tilde{H}\left(t,x,\lambda p-\frac{\ep(x-x_0)}{|x-x_0|^2+1}\right)-M\le \lambda( \tilde{H}(t,x,p)- M)+\ep \bar C(\lambda,x_0)$$
	for all $(t,x,p) \in \Omega \times \mathbb{R}^N$ and $\epsilon \in [0,\bar\epsilon(\lambda,x_0)]$.

%

\end{enumerate}
\begin{thm}\label{CP}
	Suppose that $\tilde{H}$ satisfies (A1)--(A3). Let $\bar w$ and $\underline{w}$ be a pair of super- and sub-solutions of \eqref{eq:hj1} such that $\bar{w}\ge\underline{w}$ on  $\partial_p\Omega$, then $\bar{w}\ge\underline{w}$ in $\Omega.$
\end{thm}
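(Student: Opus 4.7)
\textbf{Proof proposal for Theorem \ref{CP}.} The plan is to argue by contradiction via a doubling-of-variables technique, adapted so as to exploit the \emph{one-directional} continuity of $\tilde H$ provided by (A2), and to use (A3) to reduce to a strict sub-solution. Assume for contradiction that $M_0 := \sup_{\Omega}(\underline{w} - \bar{w}) > 0$. Since $\underline{w} \leq \bar{w}$ on $\partial_p\Omega$, either $M_0$ is attained at an interior point or in the limit as one approaches infinity in $\Omega$; in the latter case I would localize by subtracting an $\epsilon\log(1+|x-x_*|^2)$ term (using the $\bar C(\lambda,x_0)$ bound from (A3) to absorb its effect) so that the $\sup$ is attained on a compact set.

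The first step is to construct a strict sub-solution. Using (A3) with $M$ as in (A3), fix $\lambda \in [0,1)$ close to $1$ and $\epsilon>0$ small, and set
\begin{equation*}
\underline w_{\lambda,\epsilon}(t,x) := \lambda \underline w(t,x) + (1-\lambda)Mt - \epsilon\log\bigl(|x-x_0|^2 + 1\bigr).
\end{equation*}
A direct computation using (A3) shows that at any interior maximum of $\underline w_{\lambda,\epsilon} - \varphi$ with $\underline w_{\lambda,\epsilon}(t,x) > Lt$, one has
$\partial_t\varphi + \tilde H_*(t,x,D_x\varphi) \leq -\gamma$ for some $\gamma = \gamma(\lambda,\epsilon) > 0$, i.e.\ $\underline w_{\lambda,\epsilon}$ is a strict sub-solution. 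Choosing $\lambda$ close to $1$ and $\epsilon$ small, $\sup_\Omega (\underline w_{\lambda,\epsilon} - \bar w)$ remains positive and, by the log penalty, is attained at some compact set of points; pick one and call it $(t_0,x_0)$. Let $(h_0,k_0)$ be the unit vector provided by (A2) at $(t_0,x_0)$.

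The second step is the doubling of variables aligned with $(h_0,k_0)$. Define, for $\eta>0$ small,
\begin{equation*}
\Psi_\eta(t,x,s,y) := \underline w_{\lambda,\epsilon}(t,x) - \bar w(s,y) - \frac{|(t-s) - \sqrt{\eta}\,h_0|^2 + |(x-y) - \sqrt{\eta}\,k_0|^2}{2\eta}.
\end{equation*}
Let $(t_\eta,x_\eta,s_\eta,y_\eta)$ be a maximizer. By classical estimates (e.g.\ \cite[Prop.~3.7]{Barles2013}), as $\eta \to 0$ we have $(t_\eta,x_\eta),(s_\eta,y_\eta) \to (t_0,x_0)$, the penalty term tends to $0$, and
\begin{equation*}
\frac{(t_\eta-s_\eta,\,x_\eta-y_\eta)}{\|(t_\eta-s_\eta,\,x_\eta-y_\eta)\|} \to (h_0,k_0),
\end{equation*}
so that (A2) is applicable at these doubled points for $\eta\ll 1$. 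The constraint $\underline w_{\lambda,\epsilon}(t_\eta,x_\eta) > Lt_\eta$ can be arranged (otherwise $\underline w \leq Lt + o(1) \leq \bar w + o(1)$ at the limit point, contradicting $M_0>0$).

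For the third step, writing $q_\eta := \bigl((t_\eta-s_\eta) - \sqrt\eta h_0\bigr)/\eta$ and $p_\eta := \bigl((x_\eta-y_\eta) - \sqrt\eta k_0\bigr)/\eta$ as the gradient of the penalty, the viscosity inequalities for the strict sub-solution $\underline w_{\lambda,\epsilon}$ and the super-solution $\bar w$ give
\begin{equation*}
q_\eta + \tilde H_*(t_\eta,x_\eta,p_\eta) \leq -\gamma \quad\text{and}\quad q_\eta + \tilde H^*(s_\eta,y_\eta,p_\eta) \geq 0,
\end{equation*}
and subtracting yields $\gamma \leq \tilde H^*(s_\eta,y_\eta,p_\eta) - \tilde H_*(t_\eta,x_\eta,p_\eta)$. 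By (A1) and (A2) this is bounded by $\omega_R\bigl((|t_\eta-s_\eta|+|x_\eta-y_\eta|)(1+|p_\eta|)\bigr) + o(1)$. Since $|t_\eta-s_\eta|+|x_\eta-y_\eta| = O(\sqrt\eta)$ and $|p_\eta| = O(1/\sqrt\eta)$ are both tight, a more refined choice (e.g.\ inserting a second-order penalty $\eta^{-1}\bigl(|(t-s)-\sqrt\eta h_0|^4 + |(x-y)-\sqrt\eta k_0|^4\bigr)$, or replacing $\sqrt\eta$ by $\eta^\alpha$ with $\alpha\in(\tfrac12,1)$) is needed so that the product $(|t_\eta-s_\eta|+|x_\eta-y_\eta|)(1+|p_\eta|) \to 0$; then $\omega_R(\cdot) \to 0$ and we obtain $\gamma \leq 0$, the desired contradiction.

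The main obstacle is exactly this last calibration: (A2) only controls $\tilde H$ along the direction $(h_0,k_0)$ and only with a modulus depending on $(|t-s|+|x-y|)(1+|p|)$, so one must choose the penalty so that the selected points lie precisely in the region where (A2) applies \emph{and} the error modulus tends to $0$ despite the product of a small distance and a large momentum. Once this calibration is carried out correctly, the rest of the argument follows the standard Ishii--Tourin scheme used in \cite{Ishii1997,Tourin1992,LLL_2020b}.
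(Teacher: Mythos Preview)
The paper does not actually prove this theorem: its entire proof reads ``This is \cite[Theorem A.1]{LLL_2020b}, by taking the set $\Gamma$ to be the entire $\Omega$, the hypotheses (A1)--(A4) therein become (A1)--(A3) here.'' So there is nothing to compare your argument against within this paper; you are in effect sketching the proof of the cited result.

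Your outline is the right one --- the Ishii--Tourin doubling with an offset $\sqrt{\eta}(h_0,k_0)$ to force the doubled points into the cone where (A2) applies, combined with the convexity-type reduction (A3) to produce a strict sub-solution and to localize in space --- is precisely the machinery underlying \cite{Ishii1997,Tourin1992,LLL_2020b}. Two comments are worth making.

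First, your Step~1 is not quite correctly calibrated. With $\underline w_{\lambda,\epsilon} = \lambda\underline w + (1-\lambda)Mt - \epsilon\log(|x-x_0|^2+1)$, the time derivative picks up $+(1-\lambda)M$, and applying (A3) to the spatial gradient yields an upper bound $\lambda(\partial_t\varphi + \tilde H) + (1-\lambda)M + (1-\lambda)M + \epsilon\bar C$, which is \emph{positive}, not $\leq -\gamma$. The usual device is rather to subtract $\alpha/(T-t)$ (or $\alpha t$) to manufacture strictness in the time derivative, and to use (A3) only to absorb the spatial localization term $-\epsilon\log(|x-x_0|^2+1)$; the roles of $\lambda$ and $\epsilon$ are linked through the inequality in (A3), not used to create strictness directly.

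Second, and more importantly, you yourself flag that the ``calibration'' in Step~3 is incomplete: with your penalty, $|x_\eta-y_\eta|\sim\sqrt\eta$ while $|p_\eta|$ is only $O(1/\sqrt\eta)$, so the argument $(|x_\eta-y_\eta|+|t_\eta-s_\eta|)(1+|p_\eta|)$ of $\omega_R$ is a priori only $O(1)$. The resolution in the cited references is to show, via the standard refinement that the \emph{penalty term itself} tends to zero (not merely stays bounded), that in fact $|p_\eta|=o(1/\sqrt\eta)$; this uses that the supremum of $\underline w_{\lambda,\epsilon}-\bar w$ is attained. Your parenthetical suggestions (a quartic penalty, or changing the exponent $\alpha$) are not needed. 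Since this is the crux of the whole argument, leaving it as ``a more refined choice is needed'' means the proof is not complete as written. If you want a self-contained proof rather than a citation, you must carry this step through.
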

\begin{proof}
This is \cite[Theorem A.1]{LLL_2020b}, by taking the set $\Gamma$ to be the entire $\Omega$, the hypotheses (A1)--(A4) therein become (A1)--(A3) here.
\end{proof}

For our purpose, let $$H(s,q,p) = q + |p|^2 + R_1(s) + R_2(s)\int_0^{\tau_0}\int_\mathbb{R} \Gamma(\tau,y) e^{\tau q + y p}\,dyd\tau.$$ 
Since $H$ is strictly increasing in $q$, we define $\tilde{H}(s,p)$ by 
\begin{equation}\label{eq:AA1}
H(s,q,p) = 0 \quad \text{ if and only if }\quad -q = \tilde{H}(s,p).
\end{equation}
Define $H_*$ and $\tilde{H}_*$ to be the lower envelopes of $H$ and $\tilde{H}$ respectively:
$$
H_{*}(s,q,p) = \liminf_{(s',q',p') \to (s,q,p)} H(s',q',p'),\quad \text{ and } \quad \tilde{H}_{*}(s,p) = \liminf_{(s',p') \to (s,p)} \tilde{H}(s',p') .
$$
Similarly, define the upper envelopes $H^*$ of $H$ and $\tilde{H}^*$ of $\tilde{H}$ by replacing $\liminf$ by $\limsup$ in the above.
\begin{lemma}\label{lem:AA1} We show that \eqref{eq:AA1} holds for the lower and upper envelopes as well, i.e. 
\begin{equation}\label{eq:AA2}
\begin{cases}
H_*(s_0,q_0,p_0) \leq  0   \quad \text{ if and only if }\quad q_0 + \tilde{H}_*(s_0,p_0) \leq 0,\\
H^*(s_0,q_0,p_0) \geq 0  \quad \text{ if and only if }\quad q_0 + \tilde{H}^*(s_0,p_0) \geq 0\\
\end{cases}
\end{equation}
\end{lemma}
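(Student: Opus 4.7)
My plan rests on a single fact: since $R_2,\Gamma\ge 0$, direct differentiation of
\[
H(s,q,p)=q+|p|^2+R_1(s)+R_2(s)\int_0^{\tau_0}\!\int_{\mathbb{R}}\Gamma(\tau,y)e^{\tau q+yp}\,dyd\tau
\]
gives $\partial_q H\ge 1$, so $H(s,q_1,p)-H(s,q_2,p)\ge q_1-q_2$ whenever $q_1\ge q_2$. Specializing to $q_2=-\tilde H(s,p)$, where $H$ vanishes by \eqref{eq:AA1}, I obtain the pivotal linear lower bound
\[
H(s,q,p)\;\ge\; q+\tilde H(s,p)\qquad\text{for all }(s,q,p).
\]

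For the first equivalence I argue as follows. To prove $(\Rightarrow)$, I pick $(s_n,q_n,p_n)\to(s_0,q_0,p_0)$ realizing $H_*$; the pivotal bound yields $\tilde H(s_n,p_n)\le H(s_n,q_n,p_n)-q_n$, and taking $\liminf$ (which on the left is at least $\tilde H_*(s_0,p_0)$ by definition of the lower envelope) produces $q_0+\tilde H_*(s_0,p_0)\le H_*(s_0,q_0,p_0)\le 0$. For $(\Leftarrow)$, I assume $q_0+\tilde H_*(s_0,p_0)\le 0$, fix $\epsilon>0$, and choose $(s_n,p_n)\to(s_0,p_0)$ realizing $\tilde H_*(s_0,p_0)$. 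Then $q_n^\ast:=-\tilde H(s_n,p_n)\to -\tilde H_*(s_0,p_0)\ge q_0$, so $q_n^\ast\ge q_0-\epsilon$ eventually. Applying the monotonicity bound between $q_n^\ast$ and $q_0-\epsilon$ together with $H(s_n,q_n^\ast,p_n)=0$ gives $H(s_n,q_0-\epsilon,p_n)\le (q_0-\epsilon)-q_n^\ast$; letting $n\to\infty$ yields $H_*(s_0,q_0-\epsilon,p_0)\le q_0+\tilde H_*(s_0,p_0)-\epsilon\le -\epsilon$, and lower semicontinuity of $H_*$ at $(s_0,q_0,p_0)$ then lets me pass $\epsilon\to 0^+$ to conclude $H_*(s_0,q_0,p_0)\le 0$.

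For the second equivalence the argument is simpler. Hypothesis (H4) makes $R_1,R_2$ upper semicontinuous; since the nonlocal exponential is continuous in $(q,p)$ and $R_2,\Gamma\ge 0$, $H$ itself is jointly u.s.c., so $H^*\equiv H$. A short argument then shows $\tilde H$ is also jointly u.s.c.\ in $(s,p)$: if $\tilde H(s_n,p_n)\to\alpha$ along $(s_n,p_n)\to(s_0,p_0)$, then $-\tilde H(s_n,p_n)\to -\alpha$, and $0=H(s_n,-\tilde H(s_n,p_n),p_n)$ combined with u.s.c.\ of $H$ forces $H(s_0,-\alpha,p_0)\ge 0$, whence strict monotonicity of $H$ in $q$ yields $-\alpha\ge -\tilde H(s_0,p_0)$, i.e.\ $\alpha\le\tilde H(s_0,p_0)$. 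Thus $\tilde H^*\equiv\tilde H$, and the second equivalence reduces to the tautology $H(s_0,q_0,p_0)\ge 0 \iff q_0\ge -\tilde H(s_0,p_0)$, which is immediate from strict monotonicity of $H$ in $q$ and the defining identity $H(s_0,-\tilde H(s_0,p_0),p_0)=0$.

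The main subtlety will be the $(\Leftarrow)$ direction of the first equivalence: along the extracting sequence one may have $q_0>q_n^\ast$, which prevents evaluating $H$ directly at the test point $q_0$; the remedy is the $\epsilon$-shift to $q_0-\epsilon$ followed by a final appeal to lower semicontinuity of $H_*$ as $\epsilon\to 0^+$.
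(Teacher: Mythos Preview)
Your ``pivotal linear lower bound'' $H(s,q,p)\ge q+\tilde H(s,p)$ \emph{for all} $(s,q,p)$ is false. From $\partial_q H\ge 1$ you correctly get $H(s,q_1,p)-H(s,q_2,p)\ge q_1-q_2$ whenever $q_1\ge q_2$, but specializing to $q_2=-\tilde H(s,p)$ requires $q\ge -\tilde H(s,p)$; for $q<-\tilde H(s,p)$ the inequality reverses to $H(s,q,p)\le q+\tilde H(s,p)$. (A quick check: with $R_1=0$, $R_2=1$, $\Gamma=\delta_{(\tau_0,0)}$ and $p=0$, one has $H=q+e^{\tau_0 q}$, and $H<q+\tilde H$ for $q$ sufficiently negative.) This breaks your $(\Rightarrow)$ argument for the first equivalence, since along a realizing sequence $(s_n,q_n,p_n)$ for $H_*$ there is no reason to have $q_n\ge -\tilde H(s_n,p_n)$.

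The fix is immediate. Pass to a subsequence: either $q_n\le -\tilde H(s_n,p_n)$ infinitely often, in which case $q_n+\tilde H(s_n,p_n)\le 0$ along that subsequence and taking $\liminf$ gives $q_0+\tilde H_*(s_0,p_0)\le 0$ directly; or $q_n>-\tilde H(s_n,p_n)$ for all large $n$, so $H(s_n,q_n,p_n)\ge 0$, which together with $H(s_n,q_n,p_n)\to H_*(s_0,q_0,p_0)\le 0$ forces $H(s_n,q_n,p_n)\to 0$, and \emph{now} your bound (valid in this regime) yields $0<q_n+\tilde H(s_n,p_n)\le H(s_n,q_n,p_n)\to 0$, whence again $q_0+\tilde H_*(s_0,p_0)\le 0$. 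Your $(\Leftarrow)$ argument for the first equivalence is fine: there you use the monotonicity bound with the roles $q_1=q_n^\ast\ge q_0-\epsilon=q_2$, which you did verify. Your treatment of the second equivalence via $H^*=H$ and $\tilde H^*=\tilde H$ (both u.s.c.\ under (H4)) is correct and pleasantly direct; the paper merely says this case is ``analogous'' to the first, while your argument makes the simplification explicit. The paper's route for the first equivalence is somewhat different: it fixes $q_0=-\tilde H_*(s_0,p_0)$ and shows $H_*(s_0,q_0,p_0)=0$ by two one-sided estimates, then reads off the equivalence from monotonicity of $H_*$ in $q$; your (repaired) approach uses the quantitative bound $\partial_q H\ge 1$ more directly and avoids the paper's auxiliary claim \eqref{eq:AA4}.
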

\begin{proof}
We only show the first part of \eqref{eq:AA2}, since the latter part is analogous. Let $q_0 = -\tilde{H}_*(s_0,p_0)$. By monotonicity of $H$ in $q$, it remains to show that $H_*(s_0,q_0,p_0) = 0$.

First, choose $(s_n,p_n) \to (s_0,p_0)$ such that $q_n:= -\tilde{H}(s_n,p_n) \to q_0$. By definition of $\tilde{H}$, we have $H(s_n,q_n,p_n) = 0$. Taking $n \to \infty$, we have
\begin{equation}\label{eq:AA3}
0 \geq H_*(s_0,q_0,p_0).
\end{equation}

Next, choose another sequence $(s'_n,q'_n,p'_n) \to (s_0,q_0,p_0)$ such that 
$$
H(s'_n,q'_n,p'_n) \to H_*(s_0,q_0,p_0).
$$
Then by the monotonicity of $H$ in $q$, we have
$$
0 = H(s'_n, - \tilde{H}(s'_n,p'_n), p'_n) \leq  H(s'_n, - \tilde{H}_*(s'_n,p'_n), p'_n).
$$
and hence
\begin{equation}\label{eq:AA3b}
    0 \leq H(s'_n, q'_n, p'_n) + \left[ H(s'_n, - \tilde{H}_*(s'_n,p'_n), p'_n) - H(s'_n, q'_n, p'_n)\right].
\end{equation}
We claim that
\begin{equation}\label{eq:AA4}
\limsup_{n \to \infty} \left[ H(s'_n,- \tilde{H}_*(s'_n,p'_n),p'_n) - H(s'_n,q'_n,p'_n)\right] \leq 0.
\end{equation}
Indeed, this is due to $\limsup_{n \to \infty} [- \tilde{H}_*(s'_n,p'_n)]\leq q_0$ and that $(s'_n,q'_n,p'_n) \to (s_0,q_0,p_0)$, and that $H(s,q,p)$ is continuous and monotone in $q$. Having proved \eqref{eq:AA4}, we can  take $n \to \infty$ in \eqref{eq:AA3b} to get
$0 \leq H_*(s_0,q_0,p_0).$ Combining with \eqref{eq:AA3}, the first part of \eqref{eq:AA2} is proved.
\end{proof}
\begin{lemma}\label{lem:AA3}
Let $\tilde{H}(s,p)$ be given in \eqref{eq:AA1}. Then $\tilde{H}$ is convex in $p$, and hypothesis {\rm(A3)} holds.
\end{lemma}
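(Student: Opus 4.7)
The proof splits into three essentially independent checks: (a) joint convexity of $H(s,q,p)$ in $(q,p)$; (b) transfer of this to convexity of $\tilde H(s,\cdot)$ via the implicit relation \eqref{eq:AA1}; and (c) a uniform local bound on $\tilde H$ from which (A3) follows almost immediately via convexity.

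Step (a) is by inspection of
\[
H(s,q,p)=q+|p|^2+R_1(s)+R_2(s)\int_0^{\tau_0}\!\!\int_{\mathbb{R}}\Gamma(\tau,y)\,e^{\tau q+yp}\,dy\,d\tau:
\]
each map $(q,p)\mapsto e^{\tau q+yp}$ is convex because its exponent is linear, and the integral is a nonnegative weighted average of these; the remaining summands $q+|p|^2+R_1(s)$ are obviously convex in $(q,p)$. For step (b), I will use that $\partial_q H\geq 1$, so $q\mapsto H(s,q,p)$ is strictly increasing and $\tilde H$ is well defined. Given $\lambda\in[0,1]$ and $p_1,p_2$, set $q_i=-\tilde H(s,p_i)$; joint convexity gives $H(s,\lambda q_1+(1-\lambda)q_2,\lambda p_1+(1-\lambda)p_2)\leq \lambda H(s,q_1,p_1)+(1-\lambda)H(s,q_2,p_2)=0$, and strict monotonicity in $q$ forces the unique root $q^*_\lambda=-\tilde H(s,\lambda p_1+(1-\lambda)p_2)$ to satisfy $q^*_\lambda\geq \lambda q_1+(1-\lambda)q_2$, which is exactly convexity of $\tilde H$ in $p$.

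For step (c), by (H3) the quantity $C_0:=\int_0^{\tau_0}\!\!\int_{\mathbb{R}}\Gamma(\tau,y)\,e^{|y|}\,dy\,d\tau$ is finite. From the defining equation $H(s,q,p)=0$, whenever $q=-\tilde H(s,p)\leq 0$ I have $e^{\tau q+yp}\leq e^{|y|}$ for $|p|\leq 1$, hence $q\geq -1-\|R_1\|_\infty-\|R_2\|_\infty C_0$; whenever $q>0$ I trivially get $\tilde H(s,p)<0$. Therefore
\[
M:=\max\bigl\{0,\;1+\|R_1\|_\infty+\|R_2\|_\infty C_0\bigr\}
\]
dominates $\tilde H(s,p)$ uniformly in $s$ for $|p|\leq 1$. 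To verify (A3), fix $\lambda\in[0,1)$ and $s_0\in\mathbb{R}$, and set $\bar\epsilon(\lambda,s_0):=2(1-\lambda)$ and $\bar C(\lambda,s_0):=1$. Writing $v:=-\epsilon(s-s_0)/(|s-s_0|^2+1)$, the elementary bound $|t|/(t^2+1)\leq 1/2$ gives $|v|\leq \epsilon/2$, so $|v/(1-\lambda)|\leq 1$ whenever $\epsilon\in[0,\bar\epsilon]$. Writing $\lambda p+v$ as the convex combination $\lambda\cdot p+(1-\lambda)\cdot\frac{v}{1-\lambda}$ and invoking step (b),
\[
\tilde H(s,\lambda p+v)\leq \lambda\,\tilde H(s,p)+(1-\lambda)\,\tilde H\!\left(s,\tfrac{v}{1-\lambda}\right)\leq \lambda\,\tilde H(s,p)+(1-\lambda)M,
\]
and subtracting $M$ yields $\tilde H(s,\lambda p+v)-M\leq \lambda(\tilde H(s,p)-M)\leq \lambda(\tilde H(s,p)-M)+\epsilon\bar C$, which is (A3).

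The only step that requires real attention is the uniform bound $M<\infty$ in step (c): this is where the finite exponential moment in (H3) is essential — without it the nonlocal term in $H$ would be uncontrollable and $\tilde H$ could fail to be locally uniformly bounded. Once $M$ is in hand, (A3) is a direct consequence of convexity and the bookkeeping above.
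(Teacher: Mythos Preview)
Your proof is correct and follows essentially the same approach as the paper: joint convexity of $H$ in $(q,p)$, transferred to convexity of $\tilde H$ in $p$ via monotonicity in $q$, and then (A3) obtained from a convex decomposition of $\lambda p + v$. The only differences are cosmetic: the paper uses the three-term decomposition $\lambda p - \epsilon\psi' = \lambda\cdot p + \epsilon\cdot(-\psi') + (1-\lambda-\epsilon)\cdot 0$ and sets $M=\sup_s \tilde H(s,0)$, $\bar C=\sup_s \tilde H(s,\psi')$ without spelling out their finiteness, whereas you use a two-term decomposition and make the bound $M<\infty$ explicit via (H3); your version is arguably cleaner and in fact yields (A3) with $\bar C=0$.
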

\begin{proof}We first prove the convexity of $\tilde{H}$. It suffices to show that 
\begin{equation}\label{eq:AA5}
\frac{1}{2}\tilde{H}(s,p_1) + \frac{1}{2} \tilde{H}(s,p_2) \geq \tilde{H}\left(s, \frac{p_1+p_2}{2}\right) \quad \text{ for any }s, p_1, p_2.
\end{equation}
For $i=1,2$, denote $q_i = \tilde{H}(s,p_i)$, then by the convexity of $H(s,q,p)$ in $(q,p)$, then \begin{align*}
0=\frac{1}{2} H(s,-q_1,p_1) + \frac{1}{2} H(s,-q_2,p_2) \geq H\left(s, -\frac{q_1+q_2}{2},\frac{p_1+p_2}{2}\right).
\end{align*}
By the monotonicity of $H$ in $q$, we may compare the above with $H\left(s, - \tilde{H}\left(s, \frac{p_1+p_2}{2}\right),\frac{p_1+p_2}{2}\right)=0$ and deduce $-\frac{q_1+q_2}{2} \leq - \tilde{H}\left(s, \frac{p_1+p_2}{2}\right).$ This proves \eqref{eq:AA5}.

Next, the hypothesis (A3) follows as a consequence of the convexity. Indeed,
\begin{align*}
    \tilde{H}(x/t, \lambda p - \ep \psi'(x-x_0)) &\leq\lambda \tilde{H}(x/t,p) + \epsilon \tilde{H}(x/t,\psi'(x-x_0)) + (1-\lambda - \epsilon) \tilde{H}\left(x/t, 0\right)   \\
    &\leq  \lambda \tilde{H}(x/t,p) + (1-\lambda) M + \ep \overline{C}(\lambda,x_0),
\end{align*}
where $\psi'(x-x_0) = \frac{x-x_0}{|x-x_0|^2 + 1}$, $M = \sup \tilde{H}(x/t,0)$ and $\overline{C}(\lambda,x_0)= \sup \tilde{H}\left(x/t, \psi'(x-x_0)\right).$
\end{proof}

\begin{proof}[Proof of Proposition \ref{cp:rho1}]
Let $\underline\rho$ and $\overline\rho$ be respectively sub- and super-solutions of 
$$
\min\{H(s,\rho - s \rho', \rho'),\rho\} = 0 \quad \text{ for }s \in (0,\infty).
$$
It follows from Lemma \ref{lem:AA1} that they are respectively sub- and super-solutions of
$$
\min\{\rho- s\rho' + \tilde{H}(s,\rho'),\rho\}=0\quad \text{ for }s \in (0,\infty).
$$
Define 
$$
\underline{w}(t,x) = t \underline\rho(x/t) \quad \text{ and }\quad \overline{w}(t,x) = t \overline\rho(x/t),
$$
one can argue similarly as in the proof of Lemma \ref{lem:samehj}, to show that $\underline{w}$ and $\overline{w}$ are respectively sub- and super-solutions of 
\begin{equation}\label{eq:AA00}
\min\{w_t + \tilde{H}(x/t,w_x), w \} = 0\quad \text{ for }(t,x) \in (0,\infty)\times(0,\infty).
\end{equation}

To apply Theorem \ref{CP}, we need to verify the boundary conditions. Now, 
$$\underline{w}(t,0)=t\underline{\rho}(0)\le t\overline{\rho}(0)=\overline{w}(t,0) \quad \text{ for each }t>0,$$
and for each $x>0$, 
$$\underline{w}(0,x)\le\limsup_{t\to0^+}\left[t\underline{\rho}\left(\frac{x}{t}\right)\right]=x\lim_{s\to\infty}\frac{\underline{\rho}(s)}{s}\le x \liminf_{s\to\infty}\frac{\overline{\rho}(s)}{s}=\liminf_{t\to0^+}\left[t\overline{\rho}\left(\frac{x}{t}\right)\right]\le\overline{w}(0,x). $$
Moreover, for $t>0$ and $x>0$, 
$\underline{w}(t,x)\le x \sup\limits_{s>0}\frac{\underline{\rho}(s)}{s}$. Since the supremum is finite (see Lemma \ref{lem:2.3}(iv)), this means that
 $\underline{w}(t,x)$ is continuous at $(0,0)$ and $\underline{w}(0,0)=0\le \overline{w}(0,0)$. 

It remains to verify the hypotheses (A1)--(A3). Now, (A1) is obvious, and (A3) is verified in Lemma \ref{lem:AA3}. We claim that (H4) implies (A2). We divide into the following cases:
\begin{itemize}
    \item[{\rm(i)}] $R_1$ and $R_2$ are both non-increasing, or both non-decreasing.
    \item[{\rm(ii)}] $R_1$ is continuous, and $R_2$ is monotone.
        \item[{\rm(iii)}]  $R_2$ is piecewise constant, and the functions $R_1$ and $R_1 + R_2$ are locally monotone.
\end{itemize}
First, we consider the case (ii), and assume for the moment that $R_2$ is non-decreasing. Fix $(t_0,x_0)$ and let $(h_0,k_0) =\frac{(-x_0,t_0)}{\|(-x_0,t_0)\|}$. Then for $(t,x,s,y)$ satisfying \eqref{eq:A2}, we have $\frac{x}{t} > \frac{y}{s}$, so that
\begin{equation}\label{eq:AA7a}
0 = H \left(\frac{y}{s},- \tilde{H}\left(\frac{y}{s},p\right),p\right) 
\leq R_1\left(\frac{y}{s}\right)-R_1\left(\frac{x}{t}\right) + H\left(\frac{x}{t}, -\tilde{H}\left(\frac{y}{s},p\right),p\right).
\end{equation}
Hence,
\begin{align}
    \tilde{H}\left(\frac{y}{s},p\right) - \tilde{H}\left(\frac{x}{t},p\right)&\leq  H\left(\frac{x}{t},-\tilde{H}\left(\frac{x}{t},p\right),p\right) - H\left(\frac{x}{t},-\tilde{H}\left(\frac{y}{s},p\right),p\right) \notag\\
    &\leq R_1\left( \frac{y}{s}\right) - R_1\left(\frac{x}{t} \right)  \leq \omega_R(|x-y|+|t-s|), \label{eq:AA7}
\end{align}
where we used the fact that $\partial_q H \geq 1$ for the first inequality, and $H\left(\frac{x}{t},-\tilde{H}\left(\frac{x}{t},p\right),p\right)=0$ for the second inequality, and the fact that $R_1$ is continuous (and that $t,s$ are bounded away from zero) for the last inequality. 

In case $R_2$ is non-decreasing, the proof for case (i) is the same as case (ii), where the right hand side of \eqref{eq:AA7} is replaced by $0$, since $R_1$ has the same monotonicity of $R_2$. The proof for cases (i) and (ii) when $R_2$ is non-increasing is similar, and we omit the details.

It remains to verify (A2) for the case (iii). Since $R_2$ is piecewise constant, there exists a countable set $\{s_k\}$ such that $R_2$ is constant in each open interval in $\mathbb{R} \setminus \{s_k\}$. To verify (A3), suppose first $(t_0,x_0)$ is given so that $s_0:=x_0/t_0 \not\in \{s_k\}$. Then by the local monotonicity of $R_1$, there exists a unit vector $(h_0,k_0) = (-x_0,t_0)$ or $(x_0,-t_0)$ such that for $(t,x,s,y)$ satisfying \eqref{eq:A2}, we have
\begin{equation}
R_2\left( \frac{x}{t}\right) = R_2\left( \frac{y}{s}\right)\quad \text{ and }\quad   R_1\left( \frac{y}{s}\right)-   R_1\left( \frac{x}{t}\right)   \leq \omega_R(|x-y|+|t-s|).
\end{equation}
Then again \eqref{eq:AA7a} and \eqref{eq:AA7} holds. 

It remains to consider the case when $s_0 = s_k$ for some $k$, i.e. $R_2$ has a jump discontinuity. 
Assume, for definiteness, that $R_2(s_0+) > R_2(s_0-)$.
First, we claim that there is $\delta_1>0$ such that 
\begin{equation}\label{eq:AA8}
R_2(s') = R_2(s_0+) \quad \text{ for }s' \in [s_0,s_0+\delta_1),\quad R_2(s')= R_2(s_0-) \text{ for }s' \in (s_0-\delta_1,s_0).
\end{equation}
Indeed, $R_2$ is piecewise constant, so the above holds for $s'$ close to but not equal to $s_0$. Next, the fact that $R_1 + R_2$ is locally monotone implies that
\begin{equation}\label{eq:AA8b}
R_1(s_0) + R_2(s_0) \leq \limsup_{s' \to s_0} (R_1 + R_2)(s') \leq \limsup_{s' \to s_0} R_1(s') + \limsup_{s' \to s_0} R_2(s').
\end{equation}
Since $R_i$ are u.s.c., we have $R_i(s_0) \geq \limsup\limits_{s' \to s_0} R_i(s)$ for $i=1,2$. Substituting these into \eqref{eq:AA8b}, we have 
\begin{equation}\label{eq:A.16a}
    R_i(s_0)=\limsup\limits_{s' \to s_0} R_i(s')\quad \text{ for }\quad i=1,2
\end{equation} In view of the assumption $R_2(s_0+) > R_2(s_0-)$, we must have $R_2(s_0) = R_2(s_0+)$. We have proved \eqref{eq:AA8}.

Next, we claim that 
\begin{equation}\label{eq:AA9}
\lim_{\delta \to 0} \mathop{\sup\limits_{|s_i-s_0|<\delta}}_{s_1<s_2} (R_1(s_1)-R_1(s_2)) \leq 0.
\end{equation}
Otherwise, by the fact that $R_1$ is locally monotone (see Definition \ref{def:lc}), there exist $\delta_0>0$ and sequences  $s_{1,j} \to s_0$ and $s_{2,j} \to s_0$ such that $s_{1,j}<s_{2,j}$ and
\begin{equation}\label{eq:AA10}
\lim_{\delta \to 0} \mathop{\inf\limits_{|s_i-s_0|<\delta}}_{s_1<s_2} (R_1(s_1)-R_1(s_2)) \geq 0,\quad\text{ and }\quad R_1(s_{1,j}) > R_1(s_{2,j}) + \delta_0.
\end{equation}
By the first part of \eqref{eq:AA10}, we deduce that $\liminf\limits_{s' \to s_0 -} R_1(s') \geq R_1(s_0)$. Since $R_1$ is u.s.c., we deduce that  $R_1$ is left continuous at $s_0$, i.e. $\lim\limits_{s' \to s_0 -} R_1(s') = R_1(s_0)$. In view of the second part of \eqref{eq:AA10}, it is impossible for both $s_{1,j}$ and $s_{2,j}$ to be less than equal to
$s_0$ for $j \gg 1$. i.e. we have $s_{2,j}> s_0$ for $j \gg 1$. Using also \eqref{eq:A.16a}, we have
$$
R_1(s_0) \geq \liminf_{j \to \infty} R_1(s_{1,j})\geq  \liminf_{j \to \infty} R_1(s_{2,j}) +\delta_0 > \liminf_{s' \to s_0+} R_1(s').
$$
Combining with \eqref{eq:AA8}, we obtain
$$
\lim_{s' \to s_0-}(R_1 + R_2)(s') < (R_1+R_2)(s_0) \quad \text{ and }\quad (R_1+R_2)(s_0) > \liminf_{s' \to s_0+} (R_1 + R_2)(s').
$$
Since $R_1 + R_2$ is locally monotone at $s_0$, this is impossible. We have proved \eqref{eq:AA9}.

Having proved \eqref{eq:AA8} and \eqref{eq:AA9}, we may again take $(h_0,k_0)  =\frac{(-x_0,t_0)}{\|(-x_0,t_0)\|}$ and derive \eqref{eq:AA7a} and \eqref{eq:AA7}, so that the hypothesis (A2) can again be verified. 

Finally, if case (iii) and $R_2(s_0+) < R_2(s_0-)$ hold, then one can argue similarly that hypothesis (A2) holds with the choice of $(h_0,k_0) = \frac{(x_0,-t_0)}{\|(x_0,-t_0)\|}$. We omit the details. In conclusion, we have verified that $\underline w$ and $\overline w$ are respectively sub- and super-solutions of \eqref{eq:AA00} in $(0,\infty) \times (0,\infty)$, and hypotheses (A1)-(A3) hold. Hence, we can apply Theorem \ref{CP} to obtain
$\underline{w}(t,x)\leq \overline{w}(t,x)$ in $[0,\infty)\times[0,\infty)$, i.e.  $\underline\rho(s) \leq \overline\rho(s)$ in $[0,\infty)$.
\end{proof}

\section{Estimation for Proposition \ref{prop:2.1}}
In this section, we establish the upper estimate of $w^*(t,x)$ and lower estimate of $w_*(t,x)$. 

\begin{lemma}\label{lem:a4}
Assume that $\phi$ satisfies $({IC}_\mu)$ for some $\mu\in(0,\infty)$.	Let $w_*(t,x)$ and $w^*(t,x)$ be given by \eqref{eq:wstar}, then for any $\delta>0$,  there exist positive numbers $Q_1$ and $Q_2$ such that
	$$
	\max\{(\mu-\delta) x - Q_1 t,0\} \leq w_*(t,x)\le w^*(t,x)\le (\mu+\delta) x + Q_2 t\quad \text{ in }[0,\infty)\times [0,\infty).
	$$
	In particular, 
	$w_*(0,x)=w^*(0,x)=\mu x$ for each $x>0$.
	\end{lemma}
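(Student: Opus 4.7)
The plan is to bracket $u$ itself by two exponential profiles in the original coordinates and then translate the bounds into estimates on $w^\epsilon = -\epsilon \log u^\epsilon$ via $T = t/\epsilon$, $X = x/\epsilon$, followed by passage to the half-relaxed limits. Specifically, I will produce constants $A, B, Q_1, Q_2 > 0$ such that
\begin{equation*}
A\, e^{-(\mu+\delta)X - Q_2 T} \le u(T,X) \le B\, e^{-(\mu-\delta)X + Q_1 T} \quad \text{for all } T,\, X \ge 0.
\end{equation*}

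For the upper bound, set $\bar u(T,X):= B\, e^{-(\mu-\delta)X + Q_1 T}$. By the sublinearity in (H1), $f_i(T,X,u) \le K_i u$ with $K_i:= \sup \partial_u f_i(\cdot,\cdot,0)$, so $u$ satisfies the linear integro-differential inequality $u_T \le u_{XX} + K_1 u + K_2 \int_0^{\tau_0}\!\int_\R \Gamma(\tau,y)\, u(T-\tau, X-y)\, dy\, d\tau$. A direct substitution shows $\bar u$ is a classical super-solution of this linear inequality provided
\begin{equation*}
Q_1 \ge (\mu-\delta)^2 + K_1 + K_2 \int_0^{\tau_0}\!\int_\R \Gamma(\tau,y)\, e^{-Q_1 \tau + (\mu-\delta) y}\, dy\, d\tau,
\end{equation*}
and such $Q_1$ exists since the right-hand side is nonincreasing in $Q_1$, finite at $Q_1 = 0$ by (H3), and converges to $(\mu-\delta)^2 + K_1$ as $Q_1 \to \infty$. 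The upper bound in (IC$_\mu$) makes $\bar u \ge \phi$ on the history strip $[-\tau_0,0] \times \R$ once $B$ is chosen sufficiently large, and linear comparison then yields $u \le \bar u$ on $[0,\infty) \times \R$.

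For the lower bound, set $\underline u(T,X) := A\, e^{-(\mu+\delta)X - Q_2 T}$, with $A > 0$ to be chosen small. By \eqref{eq:H2}, for any $\eta' > 0$ there exists $\delta_* > 0$ such that $f_i(T,X,u) \ge u(r_i(T,X) - \eta')$ for $0 \le u \le \delta_*$, where $r_i := \partial_u f_i(\cdot,\cdot,0)$. I take $A \le \delta_*$ so that $\underline u \le A \le \delta_*$ throughout $[0,\infty) \times [0,\infty)$, and verify that $\underline u$ is a classical sub-solution of \eqref{Model} by reducing the sub-solution inequality (after dividing by $\underline u > 0$) to
\begin{equation*}
Q_2 + (\mu+\delta)^2 + (r_1(T,X) - \eta') + \int_0^{\tau_0}\!\int_\R \Gamma(\tau,y)(r_2(T-\tau,X-y) - \eta')\, e^{(\mu+\delta) y + Q_2 \tau}\, dy\, d\tau \ge 0.
\end{equation*}
Since $r_2 \ge 0$ (from $f_2 \ge 0$ and $f_2(\cdot,\cdot,0) = 0$), $r_1$ is uniformly bounded below by (H1), and the integral is finite for every $Q_2$ by (H3), this holds uniformly in $(T,X)$ once $\eta'$ is sufficiently small and $Q_2$ sufficiently large. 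The lower part of (IC$_\mu$) gives $\phi(\theta,X) \ge a\, e^{-(\mu+\delta/2) X}$ for $X \ge R$; for $X < R$ I invoke (H5) together with the strong maximum principle to bound $u(t_0,\cdot)$ from below by a positive constant on compact $X$-intervals for some small $t_0 > 0$. Shrinking $A$ further guarantees $\underline u \le u$ on $\{T = t_0\} \times \R$, and comparison then yields $u \ge \underline u$ on $[t_0,\infty) \times \R$. The main obstacle is that the linearization in \eqref{eq:H2} only applies where $u(T,X)$ and $u(T-\tau,X-y)$ stay below $\delta_*$; this is resolved by observing that any first touching of $u$ and $\underline u$ necessarily occurs at a point where $u = \underline u \le A \le \delta_*$, and in the nonlocal term one uses the already-established super-solution bound $u \le \bar u$ to ensure $u \le \delta_*$ in the ``ahead-of-the-front'' region where the comparison actually operates.

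Combining the two pointwise bounds and setting $T = t/\epsilon$, $X = x/\epsilon$, then taking $-\epsilon \log(\cdot)$, gives
\begin{equation*}
-\epsilon \log B + (\mu-\delta)x - Q_1 t \le w^\epsilon(t,x) \le -\epsilon \log A + (\mu+\delta)x + Q_2 t, \quad t,\, x \ge 0.
\end{equation*}
Passage to the half-relaxed limits \eqref{eq:wstar} then yields $(\mu-\delta)x - Q_1 t \le w_*(t,x) \le w^*(t,x) \le (\mu+\delta)x + Q_2 t$. Combined with $w_*(t,x) \ge 0$ (which is established directly in the first paragraph of the proof of Proposition \ref{prop:2.1}), this gives the sandwich claimed in the lemma. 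Finally, specializing to $t = 0$, $x > 0$ gives $(\mu-\delta)x \le w_*(0,x) \le w^*(0,x) \le (\mu+\delta)x$ for every $\delta > 0$, and letting $\delta \searrow 0^+$ produces $w_*(0,x) = w^*(0,x) = \mu x$.
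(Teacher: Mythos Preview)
Your upper bound on $u$ (giving the lower bound on $w_*$) is fine and matches the paper's argument. The genuine gap is in the lower bound on $u$, i.e.\ the sub-solution $\underline u(T,X)=A e^{-(\mu+\delta)X-Q_2T}$. First, $\underline u$ is unbounded as $X\to-\infty$, so comparison with $u$ cannot take place on the whole line; it must be restricted to $X\ge0$, which forces a boundary condition at $X=0$ that you never verify. Second, and more seriously, your displayed sub-solution inequality relies on $f_2(\,\cdot\,,\underline u(T-\tau,X-y))\ge \underline u(T-\tau,X-y)(r_2-\eta')$, but \eqref{eq:H2} only gives this when $\underline u(T-\tau,X-y)\le\delta_*$. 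Since $\Gamma$ is supported on all of $y\in\R$, the point $X-y$ is arbitrarily negative and $\underline u(T-\tau,X-y)\to\infty$ there; the linearization simply does not apply over the full convolution. Your proposed fix via first touching and the bound $u\le\bar u$ does not rescue this: at a touching point $(T_0,X_0)$ the nonlocal term still samples $u(T_0-\tau,X_0-y)$ for large $y$, where $X_0-y$ is small and $\bar u$ is large, so you cannot force $u\le\delta_*$ there, and $f_2$ is not assumed globally monotone.

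The paper avoids this entirely by \emph{dropping} the nonlocal term when seeking the lower bound on $u$: since $f_2\ge0$ by (H3), one has $u_T\ge u_{XX}+f_1(T,X,u)\ge u_{XX}-Cu$, which translates to the local inequality $\partial_t w^\epsilon-\epsilon\partial_{xx}w^\epsilon+|\partial_x w^\epsilon|^2\le C$. One then compares $w^\epsilon$ with the affine super-solution $\bar z^\epsilon=(\mu+\delta)x+Q(t+\epsilon)$ on the half-line $x\ge0$, using (H5) to control $w^\epsilon(t,0)$. This sidesteps both the $\delta_*$ threshold and any monotonicity hypothesis on $f_2$. If you want to stay in the $u$-variable, the correct move is likewise to discard the nonlocal term and run a local sub-solution comparison on $X\ge0$, supplying the boundary condition $u(T,0)\ge\underline u(T,0)$ via (H5).
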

\begin{proof}
Suppose $\|\phi\|_{\infty}\le L_0$, by (H1), we see that $u^\ep$ is a supersolution of 
$\ep\partial_t u - \ep^2\partial_{xx}u+C u \geq 0 \text{ in }(0,\infty)\times\mathbb{R},
$ for some $C>0$. By (H5), there exists $\delta_0>0$, such that $u(t,0)\ge\delta_0>0$ for any $t\ge t_0$. 
Note that
$w^\ep(t,x)=-\ep\log u_\ep(t,x)$ satisfies
\begin{equation}\label{eq:wepsup}
 \begin{cases}
\partial_t w^\ep - \ep \partial_{xx} w^\ep + |\partial_x w^\ep|^2-C  \leq  0 &\text{ in }(t_0,\infty)\times (0,\infty),\\
 w^\ep(t,0) \leq \ep |\log \delta_0| &\text{ in }[t_0,\infty),\\
 w^\ep(t_0,x) <+\infty &\text{ in }[0,\infty).
 \end{cases}
\end{equation}
In view of $(IC)_\mu$, we know for any small $\delta\in(0,\mu)$, there exists $0<C_1<C_2$, such that 
$$C_1 e^{-(\mu+\delta)x}\le \phi(\theta, x)\le C_2e^{-(\mu-\delta)x} \text{ for } (\theta, x)\in[-\tau_0,0]\times[0,\infty).$$
Therefore, $$(\mu-\delta)x-\ep\log C_2\leq w^\ep(\theta,x)\leq (\mu+\delta)x-\ep\log C_1, \text{ for } (\theta, x)\in[-\ep\tau_0,0]\times[0,\infty).$$
Next, we define $\bar z^\ep=(\mu+\delta)x+Q_1(t+\ep),$ where $Q_1$ is chosen to be
$$Q_1=\max\left\{\sup\limits_{t\in[-\tau_0,\infty)}[-\log u(t,0)], |\log C_1|, C\right\},$$ 
which is finite in view of \rm{(H5)}. 
Then we have 
$$w^\ep(t,0)\le \bar z^\ep(t,0) \text{ for all } t\ge-\ep\tau_0, \,\text{ and }\, w^\ep(\theta,x)\le z^\ep(\theta,x), \text{ for all } (t,x)\in[-\ep\tau_0,0]\times \R_+.$$
By comparison principle ($\bar{z}^\ep$ is super-solution of the first equation in \eqref{eq:wepsup}), 
\begin{equation}\label{eq:zsup}
   w^\ep(t,x)\le \overline{z}^\ep(t,x) = (\mu+\delta)x+Q_1(t+\ep), \text{ for } (t,x)\in[-\ep\tau_0,0]\times\R_+. 
\end{equation}

Next, let $p \mapsto \lambda(p)$ be 
given by the implicit formula\begin{equation}
\Delta(\lambda,p):=-\lambda+p^2+|\partial_vf_2(\cdot,\cdot,0)|_{\infty}\int_{0}^{\tau_0}\int_{\R }\Gamma(\tau,y) e^{p y-\lambda \tau}  dy d\tau=0.
\end{equation}
Then one can similarly define
$\underline{z}^\ep(t,x)=(\mu-\delta)x-Q_2(t+\ep)$
with $Q_2=\max\{|\log C_2|, \lambda(\mu-\delta)\}$. 
By comparison, we have
\begin{equation}\label{eq:zsub}
    \max\{\underline{z}^\ep(t,x), -\ep\log (\|\phi\|_\infty+L_0)\}\le w^\ep(t,x) \text{ in } [-\ep\tau_0,\infty)\times\R_+.
\end{equation}
Combining \eqref{eq:zsup} and \eqref{eq:zsub}, and letting $\ep \to 0$, we have 
$$\max\{(\mu-\delta)x-Q_2t, 0\}\le w_*(t,x)\le w^*(t,x)\le (\mu+\delta)x+Q_1t  \text{ in } \R^2_+.$$	
Setting $t \to 0$ and letting $\delta \to 0$, it follows that
$\mu x\le w_*(0,x)\le w^*(0,x)\le \mu x$ for all $x>0$. 
\end{proof}	
\begin{remark}\label{rmk:largesp}
By \eqref{eq:zsub}, there exists $\overline{s} > 2 Q_2/(\mu - \delta)$ such that 
$$
w^\epsilon(t,x) \geq \max\{ Q_2 t, - O(\epsilon)\} \quad \text{ when }x \geq \overline{s} t.
$$
This implies $\displaystyle \lim\limits_{t \to \infty} \sup_{x \geq \overline s t} u(t,x) = 0.$
\end{remark}

\begin{lemma}\label{lem:a5} Assume that $\phi$ satisfies $({IC}_\infty)$.	Let $w_*(t,x)$  be given by \eqref{eq:wstar}, then 
\begin{equation}\label{eq:BB0}
w^*(0,x) = w_*(0,x)=\infty \quad \text{ for each }x >0.
\end{equation}
	\end{lemma}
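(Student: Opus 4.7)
The proof mirrors that of Lemma \ref{lem:a4}, with the key observation that the $(IC_\infty)$ condition allows us to run the sub-solution construction there for \emph{arbitrarily large} decay rates $\mu$.

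First, I would extract from $(IC_\infty)$ that for every $\mu>0$ there exists $\tilde{C}_\mu>0$ such that
$$
\phi(\theta,x)\le \tilde{C}_\mu\, e^{-\mu x}\quad \text{for all }(\theta,x)\in[-\tau_0,0]\times[0,\infty).
$$
Indeed, $\limsup_{x\to\infty}\bigl[e^{\mu x}\sup_\theta \phi(\theta,x)\bigr]=0$ yields the bound for $x\ge M_\mu$ (some constant depending on $\mu$), while the $L^\infty$-bound on $\phi$ handles the compact range $x\in[0,M_\mu]$ upon absorbing the factor $e^{\mu M_\mu}$ into $\tilde{C}_\mu$.

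Next, I would reuse verbatim the sub-solution construction at the end of the proof of Lemma \ref{lem:a4}: define $\lambda(\mu)$ to be the unique root of the implicit equation displayed there, and set $Q(\mu):=\max\{|\log \tilde{C}_\mu|,\,\lambda(\mu)\}$. Then the affine function $\underline{z}^\epsilon(t,x):=\mu x - Q(\mu)(t+\epsilon)$ satisfies the required sub-solution inequality for $w^\epsilon$ and dominates the appropriate initial/boundary estimates, so the comparison argument of Lemma \ref{lem:a4} yields
$$
w^\epsilon(t,x)\ge \mu x - Q(\mu)(t+\epsilon)\quad \text{in }[-\epsilon\tau_0,\infty)\times[0,\infty).
$$

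Finally, fixing $x>0$ and letting $\epsilon\to 0$ with $(t',x')\to(0,x)$ in the definition \eqref{eq:wstar} of $w_*$, we obtain $w_*(0,x)\ge \mu x$. Since $\mu>0$ was arbitrary, this forces $w_*(0,x)=+\infty$, and then $w^*(0,x)\ge w_*(0,x)=+\infty$ by construction, proving \eqref{eq:BB0}. The only obstacle worth tracking is a bookkeeping one: one must check that $\tilde{C}_\mu$ and $\lambda(\mu)$ are both finite for \emph{every} $\mu>0$. The former is immediate from $(IC_\infty)$, and the latter is guaranteed by hypothesis (H3), which ensures the moment integral $\int_0^{\tau_0}\!\!\int_\mathbb{R}\Gamma(\tau,y)e^{\mu y-\lambda\tau}\,dyd\tau$ converges for all $(\mu,\lambda)\in\mathbb{R}^2$; this allows the construction to be carried out at each $\mu$ before sending $\mu\to\infty$.
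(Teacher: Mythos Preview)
Your proof is correct and follows essentially the same approach as the paper's: for each fixed $\mu>0$, use the $(IC_\infty)$ hypothesis to obtain an exponential upper bound $\phi\le \tilde C_\mu e^{-\mu x}$, then invoke the sub-solution comparison from Lemma~\ref{lem:a4} to get $w_*(0,x)\ge \mu x$, and finally let $\mu\to\infty$. Your write-up is in fact slightly more careful than the paper's, since you justify explicitly why the exponential bound follows from $(IC_\infty)$ (rather than only from compact support) and why $\lambda(\mu)$ is finite via (H3).
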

\begin{proof}
Given a solution $u(t,x)$ of \eqref{Model} with compactly supported initial data $\phi$. Fix $\mu>0$, there exists $C_2>0$ such that 
$\phi(\theta,x)\le C_2e^{-\mu x}$ for any $(\theta,x)\in[-\tau_0,0]\times\R$. By repeating the proof of Lemma \ref{lem:a4}, we obtain a constant $Q_2=Q_2(\mu)>0$ such that 
$$\max\{\mu x-Q_2t, 0\}\le w_*(t,x)\, \text{ in } \R^2_+.$$
Fix $x>0$, we take $t \to 0$ to get
$$\mu x\le w_*(0,x).$$
Since the above holds for each $\mu >0$, we can take $\mu \to\infty$ to deduce $w_*(0,x) = \infty$ for each $x>0$. Notice that $w^*(0,x) \geq w_*(0,x)$ by construction, we obtain \eqref{eq:BB0}.
\end{proof}

{\small
}


\begin{thebibliography}{99}
\bibitem{Weinberger1978}D.G. Aronson, H.F. Weinberger, Multidimensional nonlinear diffusion arising in population genetics,  Adv. Math. 30 (1978) 33--76.
\bibitem{Bardi1997} M. Bardi, I. Capuzzo-Dolcetta, Optimal control and viscosity solutions of Hamilton-Jacobi-Bellman equations, System \& Control: Foundations \& Applications, Birk\"{a}auser: Boston, MA,
USA; Basel, Switzerland; Berlin, Germany, 1997.

\bibitem{Barles2013} B. Barles, An introduction to the theory of viscosity solutions for first-order Hamilton-Jacobi equations and applications. Hamilton-Jacobi equations: approximations, numerical analysis and applications, 49-109, Lecture Notes in Math., 2074, Fond. CIME/CIME Found. Subser., Springer, Heidelberg, 2013.


\bibitem{BCS90}	G. Barles, L.C. Evans, P.E. Souganidis, Wavefront propagation for reaction diffusion systems of PDE, Duke Math. J. 61 (1990) 835--858. 

\bibitem{BP1987} G. Barles, B. Perthame, Discontinuous solutions of deterministic optimal stopping time problems, Model. Math. Anal. Numer. 21 (1987) 557--579.


\bibitem{Berestycki2009} H. Berestycki, O. Diekmann, C.J. Nagelkerke, P.A. Zegeling, Can a species keep pace with a shifting climate?, Bull. Math. Biol. 71 (2009) 399--429.

\bibitem{Fang2018}H. Berestycki, J. Fang, Forced waves of the Fisher-KPP equation in
a shifting environment, J. Differential Equations 264 (2018) 2157--2183.



\bibitem{BHN2008} H. Berestycki, F. Hamel,  G. Nadin. Asymptotic spreading in heterogeneous diffusive excitable media, J. Funct. Anal. 255 (2008) 2146--2189.


\bibitem{Nadin2012}H. Berestycki, G. Nadin, Spreading speeds for one-dimensional monostable reaction-diffusion
equations, J. Math. Phys. 53 (2012) 115619.


\bibitem{Nadin2020}H. Berestycki, G. Nadin, Asymptotic spreading for general heterogeneous Fisher-KPP type equations, Mem. Amer. Math. Soc. in press, 2020.

\bibitem{BR_2008}
H. Berestycki, L. Rossi, Reaction-diffusion equations for population dynamics with forced
speed. I. The case of the whole space, Discrete Contin. Dyn. Syst. 21 (2008) 41--67.

\bibitem{Bouin2018} E. Bouin, J. Garnier, C. Henderson, F. Patout, Thin front limit of an integro-differential Fisher-KPP equation with fat-tailed kernels, SIAM J. Math. Anal. 50 (2018) 3365--3394.

\bibitem{CH_2008}X. Chen, B. Hu, Viscosity solutions of discontinuous Hamilton-Jacobi equations, Interfaces Free Bound. 10 (2008) 339--359.


 \bibitem{CL_1986}  M.G.  Crandall, P.L. Lions, On the existence and uniqueness of unbounded viscosity solutions of Hamilton-Jacobi equations, Nonlin. Anal. Theory Methods Appl. 10 (1986) 353--370.
  
\bibitem{CLS1989} M.G. Crandall, P.-L. Lions, P.E. Souganidis, Maximal solutions and universal bounds for some partial differential equations of evolution, Arch. Rational Mech. Anal. 105 (1989) 163--190.





  \bibitem{ES89}L.C. Evans, P.E. Souganidis, A PDE approach to geometric optics for certain semilinear parabolic equations, Indiana Univ. Math. J. 38 (1989) 141--172.
  
 \bibitem{FLW_2016}J. Fang, Y. Lou, J. Wu, Can pathogen spread keep pace with its host invasion?, SIAM J. Appl. Math. 76 (2016) 1633--1657.
 
\bibitem{FYZ_2017}J. Fang, X. Yu, X.-Q. Zhao, Traveling waves and spreading speeds for time-space periodic monotone systems, J. Funct. Anal. 272 (2017) 4222--4262.
  
 \bibitem{Fisher1937} R.A. Fisher, The wave of advance of advantageous genes, Ann. Hum. Genet. 7 (1937) 355--369.
 
 \bibitem{Freidlin1984} M.I. Freidlin. On wave front propagation in periodic media. In: Stochastic analysis and
applications, ed. M. Pinsky, Advances in Probability and related topics, 7:147–166, 1984.



  

\bibitem{Girardin2019} L. Girardin, K.-Y. Lam, Invasion of an empty habitat by two competitors: spreading properties of monostable two-species competition-diffusion systems, P. Lond. Math. Soc. 119 (2019) 1279--1335.

\bibitem{Gourley2003}S.A. Gourley, J.W.H. So, Extinction and wavefront propagation in a reaction–diffusion model of a structured population with distributed maturation delay, Proc. R. Soc. Lond. Ser. A 133 (2003) 527--548.




\bibitem{GW2006}S.A. Gourley, J. Wu, Delayed non-local diffusive systems in biological invasion and disease spread. Nonlinear Dyn. Evol. Equ. 137--200. Fields Institute Communications, 48, American Mathematical Society, Providence (2006).







\bibitem{Scheel_2014} M. Holzer, A. Scheel, Accelerated fronts in a two-stage invasion process, SIAM J. Math. Anal. 46 (2014) 397--427.

\bibitem{HSL2019} C. Hu, J. Shang, B. Li, Spreading speeds for reaction–diffusion equations with a shifting habitat, J. Dyn. Differential Equations 32 (2020) 1941--1964.

\bibitem{Hu2020} H. Hu, T. Yi,  X. Zou, On spatial-temporal dynamics of a Fisher-KPP equation with a shifting environment, Proc. Amer. Math. Soc. 148 (2020), 213--221. 


\bibitem{Ishii1997}  H. Ishii, Comparison results for hamilton-jacobi equations without growth condition on solutions from above, Appl. Anal. 67 (1997) 357--372.

\bibitem{Ishii2013}   H. Ishii, A short introduction to viscosity solutions and the large time behavior of solutions of
Hamilton-Jacobi Equations, Hamilton-Jacobi equations: approximations, numerical analysis and
applications, 111-247, Lecture Notes in Math., 2074, Fond. CIME/CIME Found. Subser., Springer,
Heidelberg, 2013.

  \bibitem{Kolmogorov_1937} A.N. Kolmogorov, I.G. Petrovsky, N.S. Piskunov,  \'{E}tude de l\'{e}quation de la diffusion avec croissance de la quantit\'{e} de mati\'{e}re et son application \`{a} un probl\'{e}me biologique, Bulletin Universit\'{e} d'\'{E}tat \`{a} Moscou 1 (1937) 1--25.

\bibitem{KS2011} L. Kong,  W. Shen. Positive stationary solutions and spreading speeds of KPP equations in locally spatially inhomogeneous media, Methods Appl.  Anal. 18 (2011) 427--456.


\bibitem{Li2014} B. Li, S. Bewick, J. Shang, W.F. Fagan, Persistence and spread of a species with a shifting habitat edge, SIAM J. Appl. Math. 5 (2014) 1397--1417.

\bibitem{Li2018} W.-T. Li, J.-B. Wang, X.-Q. Zhao, Spatial dynamics of a nonlocal dispersal population model in a shifting environment, J. Nonlinear Sci. 28 (2018) 1189--1219.

\bibitem{LiangZhao2007} X. Liang, X.-Q. Zhao, Asymptotic speeds of spread and traveling waves for monotone semiflows with application, Comm. Pure Appl. Math. 60 (2007) 1--40.

\bibitem{Zhou2020} X. Liang, T. Zhou, Spreading speeds of nonlocal KPP equations in almost periodic media, J. Funct. Anal. 279 (2020)  108723, 58pp.

\bibitem{LLLDCDSA}Q. Liu, S. Liu, K.-Y. Lam, Asymptotic spreading of interacting species with multiple fronts I: A geometric optics approach, Discrete Contin. Dyn. Syst. 40 (2020) 3683--3714.

\bibitem{LLL_2020c} Q. Liu, S. Liu, K.-Y. Lam,  Stacked invasion waves in a competition-diffusion model with three species, J. Differential Equations 271 (2021) 665--718. {\it https://arxiv.org/pdf/1910.04217.pdf}

\bibitem{LLL_2020b} S. Liu, Q. Liu, K.-Y. Lam, Asymptotic spreading of interacting species with multiple fronts II: Exponentially decaying initial data, 50pp. {\it https://arxiv.org/pdf/1908.05026.pdf}

\bibitem{Majda1994} A. Majda, P.E. Souganidis,  Large scale front dynamics for turbulent reaction-diffusion equations with separated velocity scales, Nonlinearity 7 (1994) 1--30.


\bibitem{Potapov2004}A.B. Potapov, M.A. Lewis,  Climate and competition: the effect of moving range boundaries on habitat invasibility,
Bull. Math. Biol. 66 (2004) 975--1008.

  \bibitem{Souganidis1997} P.E. Souganidis, Front propagation: theory and applications. In Viscosity solutions and applications (Montecatini Terme, 1995), volume 1660 of Lecture Notes in Math.,
186–242, Springer, Berlin, 1997.

































\bibitem{Schaaf_1987}K.W. Schaaf, Asymptotic behavior and travelling wave solutions for parabolic functional differential equations, Trans. Amer. Math. Soc. 32 (1987) 587--615.

\bibitem{Shen_2010}W. Shen, Variational principle for spatial spreading speeds and generalized propagating speeds in time almost periodic and space periodic KPP models, Trans. Amer. Math. Soc. 362 (2010) 5125--5168.

\bibitem{SWZ_2001}J.W.-H. So, J. Wu, X. Zou, A reaction-diffusion model for a single species with age structure. I. Travelling wavefronts on unbounded domains, R. Soc. Lond. Proc., Ser. A, Math. Phys. Eng. Sci. 457 (2001) 1841--1853.  

 \bibitem{Tourin1992} A. Tourin, A comparison theorem for a piecewise
 Lipschitz continuous Hamiltonian and application to Shape-from-Shading problems, Numer. Math. 62 (1992) 75-85.

\bibitem{Weinberger_1982}H.F. Weinberger, Long time behavior of a class of biological models, SIAM J. Math. Anal. 13 (1982) 353--396.

\bibitem{Weinberger2002} H.F. Weinberger, On spreading speed and travelling waves for growth and migration
models in a periodic habitat, J. Math. Biol. 45 (2002) 511--548.

\bibitem{Xin2000} J. Xin, Front propagation in heterogeneous media, SIAM Rev. 42 (2000) 161--230.

\bibitem{XuXiao_2016}Z. Xu, D. Xiao, Spreading speeds and uniqueness of traveling waves for a reaction diffusion equation with spatio-temporal delays, J. Differential Equations 260 (2016) 268--303.


\bibitem{Yi2020} T. Yi, X.-Q. Zhao, Propagation dynamics for monotone evolution systems without spatial translation invariance, J. Funct. Anal.  279 (2020) 108722.

\bibitem{Yuan2019}Y. Yuan, Y. Wang, X. Zou, Spatial dynamics of a Lotka-Volterra model with a shifting habitat, Discrete Contin. Dyn. Syst. Ser. B 24 (2019) 5633--5671.

\bibitem{Zhang2017} Z. Zhang, W. Wang, J. Yang, Persistence versus extinction for two competing species under climate change, Nonlinear Analysis: Modelling and Control 22 (2017) 285--302.

\bibitem{Zhang2020}G.-B.  Zhang, X.-Q. Zhao,  Propagation dynamics of a nonlocal dispersal Fisher-KPP equation in a time-periodic shifting habitat, J. Differential Equations 268 (2020) 2852--2885.


\end{thebibliography}
\end{document}